\newtheorem{theorem}{Theorem}[section]
\newtheorem{lemma}[theorem]{Lemma}
\newtheorem{corollary}[theorem]{Corollary}
\newtheorem{definition}[theorem]{Definition}
\newenvironment{proof of claim}
{\begin{trivlist}  \item \textsc{Proof of Claim:}~} {\hfill $\Box$
\end{trivlist}}
\newcommand{\closure}[1]{\ensuremath{\mathrm{cl}}(#1)}
\newcommand{\Sp}{\mathbb{S}}
\def \st {\operatorname{st}}
\def \d {\operatorname{\delta}}
\def \R{\mathbb{R}}
\def \Q{\mathbb{Q}}
\def \N{\mathbb{N}}
\def \m{\mathfrak{m}}
\def \k{\boldsymbol{k}}
\def \inc{\operatorname{i}}
\def \C{\mathcal{C}}
\def \o{\operatorname{o}}
\def \<{\langle}
\def \>{\rangle}
\def \ind{\operatorname{ind}}
\newcommand{\ma}{\mathfrak{m}}
\def \Def{\operatorname{Def}}
\def \cl {\mathrm{cl}}
\def \cP{\mathscr{P}}
\newcommand{\bk}{\boldsymbol{k}}
\begin{document}
\author{ Lou van den Dries\\Dept. of mathematics, UIUC\\vddries@math.uiuc.edu
\and Jana Ma\v{r}\'{i}kov\'{a}\\Dept. of math. and stat., McMaster
University\\ marikova@math.mcmaster.ca}
\title{Triangulation in o-minimal fields with standard part map}
\maketitle
\begin{abstract}{In answering questions from \cite{st1}
we prove a triangulation result that is of independent interest. In more detail, let $R$ be an
o-minimal field  with a proper convex subring $V$, and let $\st: V \to \bk$
be the corresponding standard part map. Under a mild assumption
on $(R,V)$ we show that definable sets
$X\subseteq V^n$ admit a triangulation that induces a triangulation of its standard
part $\st(X)\subseteq \bk^n$.}
\end{abstract}

\begin{section}{Introduction}

This paper is a sequel to \cite{st1}, and answers some questions it
raised. To discuss this we adopt the notations and conventions from
\cite{st1}. In particular, $R$ is an {\em o-minimal field}, that is, an
o-minimal expansion of an ordered field, $V$ is a proper convex
subring of $R$ with maximal ideal $\m$, ordered residue field
$\bk=V/\ma$, and residue map (or standard part map) $\st \colon V
\to \bk$. For each $n$ this induces $\st \colon V^n \to \bk^n$, and
for $X\subseteq R^n$ we set $\st(X)\colon = \st(X\cap V^n) \subseteq
\bk^n$. The primitives of the expansion $\bk_{\ind}$ of the ordered
field $\bk$ are the ordered ring primitives plus the $n$-ary
relations $\st(X)$ with $X\in \Def^n(R)$, for all $n$. Throughout,
$k,l,m,n$ range over $\N=\{0,1,2,\dots\}$. The problem studied in
\cite{st1} is the following:

\medskip\noindent
{\em What conditions on $(R,V)$ guarantee that $\bk_{\ind}$ is
o-minimal, and what are the definable relations of $\bk_{\ind}$ in that case?}

\medskip\noindent
Here is the main result of \cite{st1} on this issue: {\em
If $(R,V)\models \Sigma_{\inc}$, then for all $n$ the boolean algebra
$\Def^n(\bk_{\ind})$ is generated by $\{\st(X):\ X\in \Def^n(R)\}$.}
By taking $n=1$ it follows that if $(R,V)\models \Sigma_{\inc}$, then
$\bk_{\ind}$ is o-minimal.
Here $\Sigma_{\inc}$ is
a certain first-order axiom scheme to be stated below. It is satisfied in most
cases where $\bk_{\ind}$ was known to be o-minimal: when $V$ is
$\text{Th}(R)$-convex
in the sense of \cite{tconv},
and also when $R$ is
$\omega$-saturated and $V$ is the convex hull of $\Q$ in $R$. (In the latter
case, $\bk$ is isomorphic to the real field $\R$.)

We do not know whether conversely the o-minimality of
$\bk_{\ind}$ implies that $(R,V)\models \Sigma_{\inc}$, but we do prove here the
following converse:

\begin{theorem}\label{converse} If
$\Def^2(\bk_{\ind})$ is generated as a boolean algebra by its subset
$\{\st(X):\ X\in \Def^2(R)\}$, then $(R,V)\models \Sigma$.
\end{theorem}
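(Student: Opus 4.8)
The plan is to prove the contrapositive: assuming some instance of $\Sigma$ fails in $(R,V)$, I will exhibit a set in $\Def^2(\bk_{\ind})$ that does not lie in the boolean algebra $\mathcal{B}$ generated by $\{\st(X):\ X\in\Def^2(R)\}$. Each instance of $\Sigma$ is a first‑order sentence in the language of $(R,V)$ asserting a tameness property, relative to $V$ and $\m$, of a $\emptyset$‑definable function $f\colon R\to R$; so I would first fix a failing instance and, using the monotonicity theorem and cell decomposition for the o‑minimal field $R$, normalize the witness to a definable, continuous, monotone map $g$ on an open interval $I$ whose interaction with $V$ is "wild" in exactly the sense negated by $\Sigma$ — for instance $g$ carries a $V$‑cut to an $\m$‑cut, or fails to have a limit modulo $\m$, and does so in a way that varies along $I$ and is not reproduced by any single $\emptyset$‑definable function. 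Any parameters introduced by the cell decomposition are harmless, since $\Def^2(R)$ and $\mathcal{B}$ admit parameters.

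From $g$ together with a few auxiliary $\emptyset$‑definable functions I would build a definable $X\subseteq R^2$ and isolate, for suitable $X_1,X_2\in\Def^2(R)$ read off from $X$, the set
\[
W\ =\ \bigl\{(b,c)\in\bk^2\ :\ \exists\,a\in\bk\ \bigl((a,b)\in\st(X_1)\ \wedge\ (a,c)\in\st(X_2)\bigr)\bigr\}\ \subseteq\ \bk^2 ,
\]
which is plainly in $\Def^2(\bk_{\ind})$. The wildness of $g$ will be arranged so that $W$ is incompatible with the shape forced on every member of $\mathcal{B}$. For this I would prove, \emph{unconditionally} (that is, without assuming $\Sigma_{\inc}$, which is precisely the hypothesis we are not allowed to use in the converse), a structure lemma for standard parts of plane definable sets: each $\st(X)$ with $X\in\Def^2(R)$ is, after a finite partition of $\bk^2$ into cells, a finite union of points, open cells, and "$\st$‑closures" of open cells, with the combinatorial data attached to the cells drawn from a finite list; finite boolean combinations then inherit a normal form of the same kind. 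Consequently, along any definable curve $\gamma$ in $\bk^2$, a set $Y\in\mathcal{B}$ can change type only finitely often — whereas $g$ is chosen so that for a suitable $\gamma$ the preimage $\gamma^{-1}(W)$ has infinitely many connected components (equivalently, $W$ has a $2$‑dimensional frontier). This contradicts $W\in\mathcal{B}$, hence the hypothesis.

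The main obstacle is the structure lemma of the previous paragraph: controlling $\mathcal{B}$ tightly enough, without $\Sigma_{\inc}$, to certify that $W$ lies outside it. Under $\Sigma_{\inc}$ the main result of \cite{st1} gives a complete description of $\Def^n(\bk_{\ind})$, but here one must extract only the much weaker, unconditional feature — a "bounded combinatorial complexity along curves" statement for boolean combinations of $\st$‑images of definable plane sets — and the dimension $2$ of the ambient space is what makes both this feature available and the construction of $W$ possible. Choosing the failing instance of $\Sigma$, the derived pair $X_1,X_2$, and the curve $\gamma$ in concert, so that $W$ provably violates the normal form, is the heart of the argument; the rest — the monotonicity/cell‑decomposition normalization of $g$ and the book‑keeping that turns the failure of $\Sigma$ into the stated properties of $W$ — is routine.
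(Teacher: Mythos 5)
There is a genuine gap, and it sits exactly where you flag it: the ``structure lemma'' you defer to. Your plan is a contrapositive argument --- assume some instance of $\Sigma$ fails, build $W\in\Def^2(\bk_{\ind})$, and show $W$ escapes the boolean algebra $\mathcal{B}$ generated by $\{\st(X):X\in\Def^2(R)\}$. To show $W\notin\mathcal{B}$ you need an \emph{unconditional} normal form for $\st(X)$, $X\in\Def^2(R)$, and for boolean combinations thereof, strong enough to bound their ``combinatorial complexity along curves.'' But such a normal form is essentially an o-minimality statement for $\bk_{\ind}$ restricted to this generating set, and nothing like it is available when $\Sigma$ (even $\Sigma_{\inc}$) fails. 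All that is known unconditionally is weak o-minimality of $\bk_{\ind}$ (Baisalov--Poizat), which already in dimension $1$ only gives that $\st(X)$ is a finite union of convex sets, possibly without endpoints --- not the points/open-cells/closures picture you would need in dimension $2$. So the claimed lemma is not ``an obstacle to be handled''; it is in serious doubt precisely in the regime your argument must cover, and without it the construction of $W$ cannot be certified. A smaller point: $\Sigma(n)$ quantifies over definable families $X\subseteq I^{1+n}$ with parameters, not $\emptyset$-definable $R\to R$ maps; the paper's reduction (Lemma~\ref{red11}) only brings you down to one-parameter families of sets of dimension $\le 1$, not to graphs of functions, so the ``monotone $g$ on an interval'' normalization is not adequate for $n\ge 2$.

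The paper's own route is entirely different and does not argue by contraposition. From the hypothesis that $\Def^2(\bk_{\ind})$ is generated by $\{\st(X):X\in\Def^2(R)\}$ it first develops a full $V$-triangulation theory (Sections 3--6, Theorem~\ref{tr}), and uses it inductively to prove $\C(n)$ for all $n$ (i.e., that closed bounded $\bk_{\ind}$-definable sets are exactly standard parts of definable sets). Then, separately (Section 2), it proves $\C(n)\Rightarrow\Sigma(n)$ by a Hausdorff-limit argument: given a definable family $X(s)$, take the Hausdorff limit $Q$ of $\st X(t)$ as $t\to 0^+$ in $\bk$, realize $Q=\st(P)$ using $\C(n)$, and then control $d_H(X(s),P)$ as $s$ ranges near $\ma$ via Lemma~\ref{tlemma}. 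Lemma~\ref{red11} reduces to $1$-dimensional fibers so that the weaker $\C'(2)$ suffices. Nothing in this chain resembles your normal-form-along-curves idea; the hard new input is $V$-triangulation, which your proposal does not supply a substitute for.
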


\noindent
Here $\Sigma$ is a strong version of $\Sigma_{\inc}$.
To define $\Sigma_{\inc}$ and $\Sigma$, put
$$I:= \{x\in R:\ |x| \le 1\},$$ and for
$X\subseteq R^{1+n}$ and $r\in R$, put $X(r):=\{ x\in R^n :\; (r,x)\in X \}$.
``Definable'' means ``definable in $R$ with parameters from $R$'' unless
we specify otherwise. The conditions $\Sigma_{\inc}$ and $\Sigma$ on $(R,V)$ are as follows: \begin{enumerate}
\item[$\Sigma_{\inc}(n)$]: for all definable $X\subseteq I^{1+n}$, if
$X(r)\subseteq X(s)$ for all $r \le s$ in $I$, then
there exists $\epsilon_0 \in \ma^{>0}$ such that $\st X(\epsilon_0 )
=\st X(\epsilon )$ for all $\epsilon\in \ma^{>\epsilon_0 }$;
\item[$\Sigma(n)$]: for all definable $X\subseteq I^{1+n}$
there exists an $\epsilon_0 \in \ma^{>0}$ such that $\st X(\epsilon_0 )
=\st X(\epsilon )$ for all $\epsilon\in \ma^{>\epsilon_0 }$;
\end{enumerate}

\noindent
Also, let $\C(n)$ be the condition that the closed subsets of $\bk^n$
definable in $\bk_{\ind}$ are exactly the sets $\st(X)$ with $X\in \Def^n(R)$.
Finally, $\Sigma_{\inc}$, $\Sigma$, and $\C$ mean
``$\Sigma_{\inc}(n)$ for all $n$'', ``$\Sigma(n)$ for all $n$'', and
``$\C(n)$ for all $n$'', respectively.
Here is a sharper version of Theorem~\ref{converse}, incorporating also
results from \cite{st1}:
\newpage
\begin{theorem}\label{equi} The following conditions on $(R,V)$ are equivalent:
\begin{enumerate}
\item[$(1)$]\  $\C(2)$;
\item[$(2)$]\  $\C$;
\item[$(3)$]\  $\Sigma$;
\item[$(4)$]\  $\Sigma_{\inc}$;
\item[$(5)$]\  $\Def^n(\bk_{\ind})$ is generated by
$\{\st(X):\ X\in \Def^n(R)\}$, for all $n$;
\item[$(6)$]\  $\Def^2(\bk_{\ind})$ is generated by $\{\st(X):\  X\in \Def^2(R)\}$.
\end{enumerate}
\end{theorem}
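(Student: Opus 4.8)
The plan is to arrange the six conditions into a cycle and then splice in $\C$. Several links come for free or from the literature: $(3)\Rightarrow(4)$ because $\Sigma(n)$ is $\Sigma_{\inc}(n)$ with the monotonicity hypothesis deleted; $(5)\Rightarrow(6)$ and $(2)\Rightarrow(1)$ by taking $n=2$; $(4)\Rightarrow(5)$ is the theorem of \cite{st1} quoted above; and $(6)\Rightarrow(3)$ is Theorem~\ref{converse}. This already yields $(3)\Leftrightarrow(4)\Leftrightarrow(5)\Leftrightarrow(6)$, so it remains to prove $(5)\Rightarrow(2)$ and $(1)\Rightarrow(6)$. Note for later use that, by the links just listed, $(5)$ entails $\Sigma_{\inc}$, whence $\bk_{\ind}$ is o-minimal by \cite{st1}.

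For $(5)\Rightarrow(2)$, the point is to use our triangulation theorem. Let $S\subseteq\bk^n$ be closed and definable in $\bk_{\ind}$; by $(5)$ it is a boolean combination of sets $\st(X_1),\dots,\st(X_m)$ with $X_i\in\Def^n(R)$. A compactification compatible with $\st$ — composing with a definable homeomorphism $R^n\to(-1,1)^n$ and passing to closures in $[-1,1]^n$, or working in $\Sp^n$ — reduces us to the case that all $X_i$ lie in $V^n$; put $X:=\bigcup_iX_i\in\Def^n(R)$. Triangulate $X$ in a form compatible with the subsets $X_i$ and inducing a triangulation of $\st(X)$: one obtains a finite simplicial complex $K$ with vertices in $V^N$ whose standard parts span a complex $\bar K$ in $\bk^N$, a definable homeomorphism $\phi\colon|K|\to X$ carrying each $X_i$ onto a union of open simplices, and an induced homeomorphism $\bar\phi\colon|\bar K|\to\st(X)$ with $\st(\phi(\sigma))=\bar\phi(\st(\sigma))$ for every closed simplex $\sigma$ of $K$. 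Then $S=\bar\phi(T)$ for the matching boolean combination $T$ of the corresponding unions of open simplices of $\bar K$; since $S$ is closed and $\bar\phi$ a homeomorphism, $T=|\bar L|$ for a subcomplex $\bar L\subseteq\bar K$. With $L\subseteq K$ the corresponding subcomplex and $X':=\phi(|L|)\in\Def^n(R)$ we have $\st(X')=\bar\phi(\st(|L|))=\bar\phi(|\bar L|)=S$; undoing the compactification gives $S=\st(X'')$ for some $X''\in\Def^n(R)$, which is $\C(n)$.

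For $(1)\Rightarrow(6)$, assume $\C(2)$; it suffices to show every definable $S\subseteq\bk^2$ is a finite boolean combination of closed definable subsets of $\bk^2$, since by $\C(2)$ each such set is a $\st(X)$ with $X\in\Def^2(R)$. Here one uses two things. First, each individual set $\st(X)$, $X\in\Def^2(R)$, is tame in $\bk^2$: decomposing $X$ into cells of $R^2$, $\st(X)$ is a finite union of the sets $\st(C)$, and a direct analysis of the three cell types — using that the defining functions of a cell are piecewise monotone and continuous, so that the standard part of a bounded monotone arc has pairwise non-overlapping nondegenerate fibers over the first coordinate and hence contains no open box — shows each $\st(C)$ is a finite union of cells of $\bk^2$, of dimension $\le1$ whenever $\dim C\le1$. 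Second, $\bk_{\ind}$ has no ``wild'' definable sets; concretely, no definable subset of $\bk$ is dense and codense in a nondegenerate interval. Granting these, the standard stratification of $S$ goes through: $\operatorname{int}(S)=\bk^2\setminus\cl(\bk^2\setminus S)$ is the complement of a set $\st(X)$; the boundary $\partial S=\cl(S)\cap\cl(\bk^2\setminus S)$ is a set $\st(X)$ of dimension $\le1$; the part of $S$ lying on $\partial S$ is analysed inside the tame $\le1$-dimensional set $\partial S$, where the relative boundary is again some $\st(X)$, now $0$-dimensional and hence finite; and after boundedly many steps $S$ appears as a boolean combination of finitely many $\st(X)$, which is $(6)$.

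The main obstacle is the triangulation step in $(5)\Rightarrow(2)$: one must have our triangulation theorem in the sharp form ``triangulation of $X\subseteq V^n$ compatible with finitely many given definable subsets and inducing a triangulation of $\st(X)$'', one must set up a compactification that behaves well under $\st$, and one must check the two combinatorial facts that a closed boolean combination of the $\st(X_i)$ is exactly $\bar\phi$ of a subcomplex of $\bar K$ and that the corresponding subcomplex of $K$ has the prescribed standard part — i.e.\ that $\phi$ and $\bar\phi$ intertwine $\st$ simplex by simplex. A secondary obstacle is the ``no wild definable sets'' input used in $(1)\Rightarrow(6)$: should the stratification above not deliver it directly, one can instead obtain it by refining the argument behind Theorem~\ref{converse}, arguing that a wild definable set would yield a closed definable subset of $\bk^2$ that is not of the form $\st(X)$ and so contradict $\C(2)$.
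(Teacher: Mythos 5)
Your plan reverses the paper's cycle and splices in $(1)$ and $(2)$ in a reasonable way, but it contains a fatal circularity and some soft spots that need repair.

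The circularity: you cite Theorem~\ref{converse} for $(6)\Rightarrow(3)$ as though it were an independently established result. It is not. In the paper, Theorem~\ref{converse} is simply an excerpt of Theorem~\ref{equi} stated first for emphasis; it is \emph{proved} by exactly the chain you are trying to construct, namely $(6)\Rightarrow(1)$ via $V$-triangulation (Section 6) followed by $(1)\Rightarrow(3)$ via the Hausdorff-limit argument (Section 2, Lemma~\ref{closed then sigma} and Lemma~\ref{red11}). You supply neither. Concretely, with your remaining links one can only deduce $(3)\Rightarrow(4)\Rightarrow(5)\Rightarrow(2)\Rightarrow(1)\Rightarrow(6)$ and $(5)\Rightarrow(6)$; nothing flows out of $(6)$ and nothing flows into $(3)$, so the six conditions do not come out equivalent. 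The genuinely new input needed here, and entirely absent from your proposal, is the implication $\C(2)\Rightarrow\Sigma$: this is the content of Section~\ref{cnsn}, which introduces Hausdorff limits of definable families, shows a 1-parameter family of sections $Y(t)$ of $\st(X)$ has a unique Hausdorff limit $Q$, uses $\C'(2)$ to find $P\in\Def^n(R)$ with $\st(P)=Q$, and then runs a delicate infinitesimal argument (Lemma~\ref{tlemma}) to produce the required $\epsilon_0$.

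The other two new links you propose are salvageable but rough. For $(1)\Rightarrow(6)$ your detour through a ``direct analysis of cell types'' and a ``no wild definable sets'' axiom is both unjustified as written (the claim that $\st(C)$ is ``a finite union of cells of $\bk^2$'' is exactly o-minimality of $\bk_{\ind}$, which you never cleanly establish) and unnecessary: since $\C(2)\Rightarrow\C(1)$ (projections commute with $\st$) and the paper already proves $\C(1)\Leftrightarrow\bk_{\ind}$ o-minimal, you may simply invoke cell decomposition in $\bk_{\ind}$ to write any $S\in\Def^2(\bk_{\ind})$ as a boolean combination of closed definable sets, and then apply $\C(2)$ to each. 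For $(5)\Rightarrow(2)$ you are essentially reproducing the Claim inside the paper's proof of Theorem~\ref{tr}, but you blur the one point that makes it work: if $P^{\o}$ is an ``open'' piece of the triangulated set, then $\st(P^{\o})=\st(P)$, i.e.\ $\st$ sends the open simplex onto the \emph{closed} simplex $\st(P)$; consequently $\st(X_i)$ is a union of the closed sets $\st(P)$, a boolean combination of such unions is again a union of sets $\st(P)^{\o}$, and closedness of $Z$ upgrades this to a union of sets $\st(P)$, which is $\st$ of a union of $P$'s. Your phrasing about ``matching boolean combination $T$ of the corresponding unions of open simplices'' is not correct as stated, precisely because $\st$ does not respect the open/closed distinction on simplexes. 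Finally, note that the paper has no need of a separate $(1)\Rightarrow(6)$ or $(5)\Rightarrow(2)$: once $(1)\Rightarrow(3)$ and $(6)\Rightarrow(1)$ are in hand, the remaining links are $(3)\Rightarrow(2)$ and $(4)\Rightarrow(5)$ from \cite{st1} together with the trivial ones, and this closes a single cycle through all six conditions.
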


\noindent
In Section~\ref{cnsn} we prove $(1) \Rightarrow (3)$.
Since $(3) \Rightarrow (2)$
is in \cite{st1} and $(2) \Rightarrow (1)$ is obvious, this yields
the equivalence of conditions (1)--(3). The implications
$(3) \Rightarrow (4)$ and $(5) \Rightarrow (6)$ are also obvious, and
$(4) \Rightarrow (5)$ is in \cite{st1}, but
$(6) \Rightarrow (1)$ requires a new tool: $V$-triangulation.
In Sections 3, 4, 5 we prepare this tool. The main result about it is
Theorem~\ref{tr}; we need only a special case of it
to derive $(6) \Rightarrow (1)$. In triangulating we try to follow
Chapter 8 of \cite{book}, but we have to respect the standard part map and this
requires a lot of extra care. The last Section 7 contains
two more applications of $V$-triangulation.

As to the question whether o-minimality of $\bk_{\ind}$ implies
$\Sigma$, it may be worth considering the case that $V$ is the convex hull of
$\Q$ in $R$ and
the archimedean ordered field $\bk$, upon (unique) identification with an
ordered subfield of the real field $\R$, is all of $\R$.
Then it follows from Baisalov and Poizat~\cite{bp} that $\bk_{\ind}$
is o-minimal, but we do not know if $(R,V)$ necessarily
satisfies the conditions of Theorem~\ref{equi} without making
an extra assumption like ``$R$ is $\omega$-saturated"
or ``$\text{Th}(R)$ has an archimedean model''. There are such cases where these extra assumptions are not satisfied; see \cite{lr} and \cite{uk}.

\bigskip\noindent
{\bf Triangulation respecting the standard part map.}
Our $V$-triangulation result seems of independent interest, and may
be new even when $R$ is a real closed field without
further structure. In
the rest of this Introduction we state it precisely, and define
some notation used throughout the paper.
We let $r,s,t $ (sometimes with subscripts or accents) range over $R$.
For points $a_0,\dots, a_m\in R^n$ (allowing repetitions) we let
$\langle a_0,\dots, a_m\rangle$ denote the affine
span of $\{a_0,\dots, a_m\}$ in $R^n$:
$$\langle a_0,\dots, a_m\rangle =
\{t_0 a_0 + \dots + t_ma_m:\  t_0 + \dots + t_m=1\},$$
and let $[a_0,,\dots, a_m]$ be the convex hull of $\{a_0,\dots, a_m\}$ in $R^n$:
$$[a_0,\dots, a_m] =
\{t_0 a_0 + \dots + t_ma_m:\  t_0 + \dots + t_m=1, \text{ all }t_i\ge 0\}.$$
A {\em simplex in $R^n$\/} is a set $[a_0,\dots, a_m]$
with affinely independent $a_0,\dots, a_m$ in $R^n$, and given such a simplex
$S=[a_0,\dots, a_m]$ we put
$$S^{\o}= (a_0,\dots, a_m):= \{t_0 a_0 + \dots + t_ma_m:\  t_0 + \dots + t_m=1, \text{ all }t_i > 0\},$$
so $S^{\o}$ is the interior of $S$ in its affine span
$\langle a_0,\dots, a_m\rangle$, and $S$ is the closure of $S^{\o}$.
\footnote{Our terminology here is a little different from that in \cite{book}:
there the simplexes were the sets $S^{\o}$, but for the present purpose
it is more convenient for our simplexes to be closed. Likewise, our
definition of ``complex'' and ``triangulation'' here is not exactly
the same as that in \cite{book}, but it is easy to go from one setting
to the other.}
Let $a_0,\dots, a_m\in R^n$ be affinely independent. Then we call
$S=[a_0,\dots, a_m]$ an $m$-simplex. The points $a_0,\dots, a_m$ can
be recovered from $S$ because they
are exactly the extreme points of $S$, as defined in \cite{book}, p.120; they
are also referred to as the {\em vertices\/} of $S$.
A {\em face\/} of $S$ is a simplex $[a_{i_0},\dots, a_{i_k}]$ with
$0\le i_0 < \dots < i_k \le m$.
A {\em complex\/} in $R^n$ is a finite collection $K$
of simplexes in $R^n$ such that each face of each $S\in K$ is in $K$, and for all $S,S'\in K$, if $S\cap S'\ne\emptyset$, then $S\cap S'$ is a common face of
$S$ and $S'$.
For example, the collection of faces of a simplex $S$ in $R^n$ is a
complex $K(S)$ in $R^n$.
Let $K$ be a complex in $R^n$. Then $S^{\o}\cap S'^{\o}=\emptyset$ for all
distinct $S,S'\in K$. Let $|K|$ be the union of the
simplexes in $K$. Then $K^{\o}:=\{S^{\o}: S\in K\}$ is a
finite partition of $|K|$.
A {\em triangulation\/} of a definable $X\subseteq R^n$ is a pair
$(\phi,K)$ consisting of a complex $K$ in $R^n$ and a definable
homeomorphism $\phi \colon X \to |K|$; note that then $X$ is closed
and bounded in $R^n$. Such a triangulation is said to be compatible
with the set $X'\subseteq X$ if $\phi(X')$ is a union of sets
$S^{\o}$ with $S\in K$.

Up to this point this subsection does not require the presence of
$V$ and makes sense for any (not necessarily o-minimal) expansion of an
ordered field in place of $R$, for example $\bk_{\ind}$.

A set $X\subseteq R^n$ is {\em $V$-bounded\/} if for some $r\in V^{>0}$
we have $|x|\le r$ for all $x\in X$. Note that if
$a_0,\dots, a_m\in V^n$, then $[a_0,\dots, a_m]$ is $V$-bounded and
$$\st[a_0,\dots, a_m]= [\st(a_0),\dots, \st(a_m)] \subseteq \bk^n,$$
but if $S$ is a $V$-bounded simplex in $R^n$, then $\st(S)$ is not necessarily
a simplex in $\bk^n$, and even if it is, it might be just a single point
while $S$ is not.

A complex $K$ in $R^n$ is
said to be $V$-bounded if $|K|$ is $V$-bounded.
For a $V$-bounded complex $K$ in $R^n$ we set
$\st(K):=\{\st(S):\ S\in K\}$; this is not always a complex in $\bk^n$,
and even if it is we can have $\st(S)=\st(S')$ with distinct $S,S'\in K$.

A map $f\colon  X \to R^n$ with $X\subseteq R^m$ is said to be
$V$-bounded if its graph $\Gamma(f)\subseteq R^{m+n}$ is
$V$-bounded. Suppose $f\colon X \to R^n$ is definable and
$V$-bounded (so $X$ is definable and $V$-bounded). Then we say that
$f$ {\em induces\/}\footnote{This notion is a little different from
that with the same name in \cite{st1}.} the map $g\colon \st(X) \to
\bk^n$ if $\st(f(x))=g(\st(x))$ for all $x\in X$, equivalently,
$\st(\Gamma(f))=\Gamma(g)$; note that then $g$ is definable in
$\bk_{\ind}$.

\medskip\noindent
A {\em $V$-triangulation\/} of a definable $V$-bounded $X\subseteq
R^n$ is a triangulation $(\phi,K)$ of $X$ such that $K$ is
$V$-bounded, $\phi$ induces a map $\phi_{\st}\colon \st X \to
\st(|K|)$, and $(\phi_{\st}, \st(K))$ is a triangulation of $\st(X)$
in the sense of $\bk_{\ind}$.

\medskip\noindent
With this terminology in place we can state our triangulation theorem:

\begin{theorem}\label{triangthm} If $\Def^2(\bk_{\ind})$ is generated by
$\{\st(X):\ X\in \Def^2(R)\}$, then
each definable closed $V$-bounded set
$X\subseteq R^{n}$ with definable $X_1 , \dots , X_k\subseteq X$ has a
$V$-triangulation compatible with $X_1 , \dots, X_k$.
\end{theorem}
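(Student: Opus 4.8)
The plan is to prove Theorem~\ref{triangthm} by induction on $n$, following the proof of the o-minimal triangulation theorem in Chapter~8 of \cite{book} but attaching to each construction enough extra data to ensure that it descends under $\st$ to the corresponding construction inside $\bk_{\ind}$. We would work directly from the hypothesis --- that $\Def^2(\bk_{\ind})$ is generated by $\{\st(X):X\in\Def^2(R)\}$ --- and not through the equivalences of Theorem~\ref{equi}, whose proof depends on the present theorem. Two preliminaries are worth isolating. First, the hypothesis already forces $\bk_{\ind}$ to be o-minimal: every definable subset of $\bk$ is a fibre of a set in $\Def^2(\bk_{\ind})$, hence a boolean combination of fibres of sets $\st(X)$ with $X\in\Def^2(R)$; each such fibre has the form $\st(Z)$ for a definable $Z\subseteq R$ (the assertion that some $x$ with $(x,y)\in X$ is infinitesimal translates into the $R$-definable condition that $0$ lie in the closure of the relevant fibre of $X$), so it is a finite union of points and intervals. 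Second, for closed $V$-bounded definable $X$ the set $\st X$ is closed and bounded: if $a\in\overline{\st X}$ and $a'\in V^n$ lifts $a$, then the definable function $x\mapsto|x-a'|$ attains a minimum on $X$, necessarily infinitesimal, at a point whose $\st$-value is $a$. Rescaling by an element of $V^{>0}$ of nonzero residue, we may assume $X\subseteq I^n$; and the base case $n\le 1$ is immediate, a closed $V$-bounded definable subset of $R$ being a finite union of points and closed intervals with endpoints in $V$.

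For the inductive step we would fix a $V$-cell decomposition of $R^n$ partitioning $X$ and each $X_i$, of the kind produced in the preparatory Sections~3--5, in which cells and cell-defining functions have well-controlled standard parts; if need be, precede it by a linear change of coordinates with rational coefficients, chosen generically, hence in good position both upstairs and, after reduction, downstairs, and descending to a coordinate change of $\bk^n$. With $\pi\colon R^n\to R^{n-1}$ the projection off the last coordinate, the $n$-cells are then bands $(\xi,\xi')$ and the cell-defining functions are ``floor/ceiling'' functions $\xi_1<\dots<\xi_p$ over cells of an induced decomposition of $R^{n-1}$. Set $Y:=\overline{\pi(X)}$, a closed $V$-bounded definable subset of $R^{n-1}$ with $\st(Y)=\overline{\pi(\st X)}$. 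By the induction hypothesis there is a $V$-triangulation $(\psi,L)$ of $Y$ compatible with the closures of the projected cells, so that $(\psi_{\st},\st L)$ is a triangulation of $\st(Y)$ in $\bk_{\ind}$; pull the $\xi_l$ back along $\psi^{-1}$ to continuous functions on $|L|$.

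The crux is the next step: subdivide $L$ so that, on every simplex of $L$, each $\xi_l$ is affine and any two $\xi_l,\xi_{l'}$ are either equal or everywhere comparable, \emph{and simultaneously} each $\st(\xi_l)$ is affine on every simplex of $\st L$, with any two equal or comparable there. Upstairs this is the usual matter of subdividing along the piecewise-linear loci $\{\xi_l=\xi_{l'}\}$ and taking iterated barycentric subdivisions (whose barycenters lie in $V^{n-1}$, with standard parts the corresponding barycenters of $\st L$, since every nonzero rational has nonzero residue), as in \cite{book}; the extra demand is to carry this out without destroying the triangulation $(\psi_{\st},\st L)$ of $\st Y$ --- so that $\st L$ stays a complex in $\bk^{n-1}$, no simplex collapsing to a non-simplex and no two distinct simplices meeting improperly after $\st$ --- and so that the subdivision descends to a subdivision of $\st L$ with the analogous properties for the $\st(\xi_l)$. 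This is precisely what the $V$-adapted subdivision and linearization lemmas of Sections~3--5 are designed to supply. Having done this, lift: over each simplex of the subdivided $L$, assemble simplices of $R^n$ from the graphs of the $\xi_l$ and the bands between consecutive graphs, by the standard construction of a complex over a complex from a piecewise-affine family of functions. This produces a $V$-bounded complex $K$ in $R^n$ and a definable homeomorphism $\phi\colon X\to|K|$ compatible with $X_1,\dots,X_k$.

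Finally we would verify the three conditions defining a $V$-triangulation. That $K$ is $V$-bounded is clear, its vertices being graph points $(v,\xi_l(v))$ over vertices $v\in V^{n-1}$ of $L$. That $\phi$ induces $\phi_{\st}$ and that $(\phi_{\st},\st K)$ triangulates $\st X$ should follow because the whole construction was arranged to commute with $\st$: applying $\st$ to the coordinate change, to $L$ and its subdivision, to the $\xi_l$, and to the lifting produces precisely the analogous lift over $\st L$ from the piecewise-affine functions $\st(\xi_l)$, and this lift is a triangulation of $\st X$ because $\st L$ triangulates $\overline{\pi(\st X)}$ and the $\st(\xi_l)$ are the floor/ceiling functions of $\st X$ over $\st L$ --- and it is here that the hypothesis enters, through the control it gives over standard parts of one-parameter definable families, i.e.\ over $\Def^2(\bk_{\ind})$. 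The main obstacle will be the subdivision step of the previous paragraph: one must linearize and separate the $\xi_l$ over $L$ while keeping the entire standard-part picture a legitimate complex-over-complex in $\bk_{\ind}$, so that $\st K$ is forced to be a complex and $(\phi_{\st},\st K)$ a genuine triangulation. All the ``extra care'' promised in the introduction is concentrated there, and in the preparatory lemmas that make it possible.
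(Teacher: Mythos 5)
Your outline captures the right global shape --- induction on $n$, projection, lift over floor/ceiling functions --- but it misses the key mechanism by which the hypothesis on $\Def^2(\bk_{\ind})$ actually enters the inductive step, and one technical step in your plan would not work as stated.

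First, the substantive gap. You write that the hypothesis enters ``through the control it gives over standard parts of one-parameter definable families,'' but this is where the proof actually lives, and the paper makes it concrete in a way your proposal does not. The inductive hypothesis must be strengthened to carry along the statement that $\Def^n(\bk_{\ind})$ is generated by $\{\st(X):X\in\Def^n(R)\}$. From that together with the dimension-$n$ triangulation one deduces $\C(n)$ (every closed bounded set in $\Def^n(\bk_{\ind})$ is some $\st(Q)$), and then Lemma~\ref{closed then sigma} gives $(R,V)\models\Sigma(n)$. In the construction, $\Sigma(n)$ is what provides an $\epsilon_0\in\m^{>0}$ such that $\st\{x:|f(x)-g(x)|\le\epsilon_0\}=\st\{x:f(x)-g(x)\in\m\}$ simultaneously for all pairs $f,g$ in the finite family of extended cell-boundary functions; re-triangulating $X$ compatibly with the sets $\{f=g\}$ \emph{and} $\{|f-g|\le\epsilon_0\}$ is precisely what makes $F_{\st}$ a multifunction on $(\st X,\st\cP)$, i.e.\ what makes the standard-part picture a legitimate ``complex over a complex.'' Without some such step one has no way to guarantee that two functions whose graphs are infinitesimally close over a cell of $\cP$ either coincide or stay strictly separated on the corresponding cell of $\st\cP$, and $\st K$ can fail to be a complex. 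Your proposal identifies this as ``the crux'' but then gestures at ``$V$-adapted subdivision and linearization lemmas'' without specifying what they must do; the actual content is the $\Sigma(n)$ argument, and it is not a subdivision statement.

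Second, a technical misstep: you propose to subdivide $L$ so that each $\xi_l$ becomes \emph{affine} on every simplex. This is not possible (the $\xi_l$ are general definable continuous functions, not piecewise linear), and it is not what the o-minimal triangulation argument of \cite{book}~Ch.~8 does. The lifting lemma (here Lemma~\ref{trainglemma}) produces a homeomorphism from a graph or a band onto a simplex or prism by matching barycentric coordinates; the functions need only satisfy the vertex-separation conditions $(\ast)$ and $(\ast\ast)$ after a suitable subdivision (a $V$-adapted barycentric subdivision that descends correctly under $\st$). Related to this, you also need the extension step: the $\xi_l$ are initially defined only on closures of cells of a decomposition of $p(X)$, and must be extended to continuous $V$-bounded functions on all of $p(X)$ still inducing functions on the standard part (Lemmas~\ref{ext1} and~\ref{ext2}); this is non-trivial (it uses small paths and a $V$-good-direction argument, Lemma~\ref{con6}) and is absent from your outline.

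In short, the skeleton is right, but the two places where your outline says ``this is where the extra care is'' are precisely the places that carry the mathematical weight, and they require the specific machinery ($\C(n)\Rightarrow\Sigma(n)$, the $\epsilon_0$-argument, the extension lemmas, and the barycentric-style subdivision adapted to $\st$) rather than linearization.
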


\noindent
We finish this introduction with some more notation and a useful fact.
Let $n\ge 1$. For $x=(x_1,\dots, x_n)\in R^n$ and $y=(y_1,\dots, y_n)\in R^n$
we set
$$d(x,y):= \max\{|x_i-y_i|:\ i=1,\dots,n\}.$$ Likewise for $x,y\in \bk^n$.

\begin{lemma} Suppose $X\subseteq R^n$ and $f\colon X \to R$ are definable and
$V$-bounded, and $f$ induces a function $g\colon \st X \to \bk$.
Then $g$ is continuous.
\end{lemma}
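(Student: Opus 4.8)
The plan is to show continuity of $g\colon \st X \to \bk$ directly from the $\varepsilon$-$\delta$ description of continuity, transferred back and forth across the standard part map. Fix $\bar a \in \st X$ and $\bar b = g(\bar a)$, and fix $\gamma \in \bk^{>0}$; I want $\beta \in \bk^{>0}$ such that for all $\bar a' \in \st X$ with $d(\bar a', \bar a) < \beta$ we have $|g(\bar a') - \bar b| < \gamma$. Pick $a \in X \cap V^n$ with $\st(a) = \bar a$, so $\st(f(a)) = \bar b$. The key point is that $\bar a' = \st(a')$ for some $a' \in X \cap V^n$, and $d(\bar a', \bar a) < \beta$ with $\beta \in \bk$ already forces $d(a', a) < s$ for a suitable $s \in V$ with $\st(s) = \beta$ (here I use that $\st$ is a ring homomorphism $V \to \bk$ respecting the order, so $\st(d(a',a)) = d(\bar a', \bar a)$). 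Similarly $|g(\bar a') - \bar b| = |\st(f(a')) - \st(f(a))| = \st(|f(a') - f(a)|)$, so the target inequality $|g(\bar a') - \bar b| < \gamma$ will follow once I can guarantee $|f(a') - f(a)| < t$ for some $t \in V$ with $\st(t) < \gamma$, say $\st(t) = \gamma/2$.

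So the real content is a statement purely about the definable $V$-bounded $f$: for every $a \in X \cap V^n$ and every $t \in V^{>0}$ with $\st(t) > 0$, there is $s \in V^{>0}$ with $\st(s) > 0$ such that for all $a' \in X$ with $d(a',a) < s$ we have $|f(a') - f(a)| < t$. I would prove this by contradiction using o-minimality of $R$ (not of $\bk_{\ind}$). Suppose it fails for some $a$ and $t$. Consider the definable function $h\colon R^{>0} \to R$ defined by $h(s) := \sup\{\,|f(a') - f(a)| : a' \in X,\ d(a',a) < s\,\}$ (taking into account that this sup is over a definable bounded set, hence exists since $X$ is $V$-bounded and $f$ is $V$-bounded, so the values stay $\le$ some element of $V$; if $X$ near $a$ is empty for small $s$ the claim is trivial). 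Then $h$ is nondecreasing, and the hypothesis that the claim fails says $h(s) \ge t$ for all $s \in \m^{>0}$, i.e.\ $h(s) \ge t$ for all positive $s$ infinitesimally close to $0$. By o-minimality, $h$ has a limit from the right at $0$, call it $L \in R \cup \{\pm\infty\}$; monotonicity and the assumption give $L \ge t$. But we also need to derive a contradiction from the mere fact that $f$ \emph{induces} $g$.

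Here is where I expect the main obstacle, and where I would use the inducing hypothesis more carefully: the statement "$f$ induces $g$" says $\st(f(a')) = g(\st(a'))$ whenever $a' \in X \cap V^n$. In particular, if $d(a', a) \in \m$, i.e.\ $a'$ is infinitely close to $a$, then $\st(a') = \st(a) = \bar a$, hence $\st(f(a')) = g(\bar a) = \st(f(a))$, so $|f(a') - f(a)| \in \m$. Thus for \emph{every} $a' \in X \cap V^n$ with $d(a',a) < s$ for all $s \in \m^{>0}$ we automatically get $|f(a') - f(a)| < t$ — contradicting $h(s) \ge t$ for infinitesimal $s$, \emph{provided} the sup defining $h(s)$ is attained, or approximated, by points $a'$ that themselves lie in $V^n$ and infinitely close to $a$. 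Attainment is where I must be careful: the sup over $\{a' \in X : d(a',a)<s\}$ for infinitesimal $s$ ranges over points all of which satisfy $d(a',a) \in \m \subseteq V$, hence $a' \in V^n$ (as $a \in V^n$) and $\st(a') = \bar a$; and by o-minimal definable choice (curve selection) the value of $h(s)$ for such $s$ is a limit of $|f(a'(s)) - f(a)|$ along a definable curve $s \mapsto a'(s)$ with $d(a'(s),a) < s$, so $|f(a'(s)) - f(a)| \in \m$ for each infinitesimal $s$, forcing $h(s) \in \m$ too — and $h(s) \in \m$ contradicts $h(s) \ge t$ with $\st(t) > 0$. This contradiction establishes the claim, and hence the continuity of $g$. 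The delicate bookkeeping, which I would spell out in full in the actual proof, is ensuring that all witnesses $a'$ stay in $V^n$ (automatic once $d(a',a) \in \m$ and $a \in V^n$) and that the reduction to a monotone one-variable function and curve selection is legitimate given only definability and $V$-boundedness of $X$ and $f$.
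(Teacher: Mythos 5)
Your overall plan is the same as the paper's: argue by contradiction, reduce to a definable one-variable statement over $R$, use o-minimality of $R$ to locate an infinitesimal ``bad radius'', and then contradict the inducing hypothesis. The second half of your argument (showing that for infinitesimal $s$ all relevant $|f(a')-f(a)|$ are infinitesimal, hence so is their supremum $h(s)$) is correct, though the curve-selection detour is unnecessary: a nonempty bounded definable subset of $\ma^{\ge 0}$ automatically has infinitesimal supremum by convexity of $\ma$, since any element $\eta\in\ma^{>0}$ past the supremum would bound it in $\ma$.

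However, there is a genuine gap in the first half. You assert that the failure of your ``real content'' statement ``says $h(s)\ge t$ for all $s\in\ma^{>0}$,'' and then deduce $L\ge t$ from monotonicity. But the failure hypothesis only provides bad witnesses $a'$ for each $s$ with $\st(s)>0$, i.e.\ it gives $h(s)\ge t$ for \emph{non}-infinitesimal $s$, not for $s\in\ma^{>0}$. Monotonicity of $h$ does not transfer this downward: a nondecreasing $h$ can perfectly well satisfy $h(s)\in\ma$ for all $s\in\ma^{>0}$ and $h(s)\ge t$ for all $s>\ma$ (compare $h(s)=s$), so neither the claim ``$h(s)\ge t$ for infinitesimal $s$'' nor ``$L\ge t$'' follows from what you wrote; indeed, the rest of your argument shows $L$ is bounded above by an infinitesimal, the opposite of $L\ge t$. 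The missing step is exactly the one the paper makes explicit: the definable set $\{r\in R^{>0}:\exists x\in X,\ d(a,x)<r \text{ and } |f(a)-f(x)|>\epsilon\}$ contains all sufficiently small non-infinitesimal $r$, hence, being a finite union of intervals and points by o-minimality of $R$, must contain an interval whose left endpoint lies in $\ma\cup\{0\}$, and therefore contains some $r\in\ma^{>0}$. That infinitesimal bad radius $r$ then produces $x$ with $\st(x)=\st(a)$ but $\st(f(x))\ne\st(f(a))$, contradicting that $f$ induces a function. Once you replace the false ``monotonicity gives $L\ge t$'' step with this interval argument, your proof matches the paper's.
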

\begin{proof} We can assume $n\ge 1$. Assume towards a contradiction that
$a\in X$ and $\epsilon\in R^{>\ma}$
are such that for every $\delta \in R^{>\ma}$ there is
$x \in X$ such that $d(\st a ,\st x )<
\st \delta$ and $|g(\st a )- g(\st x)| >\st
\epsilon$. Then the set
$$\{ r \in R^{>0}:\; \text{there is }x \in X \text{ such that } d(a,x)< r
\mbox{ and } |f(a)-f(x)|> \epsilon )\}$$ has an element in $\ma$, by o-minimality of $R$. This contradicts that $f$ induces a function.
\end{proof}

\end{section}

\begin{section}{$\C(2) \Rightarrow \Sigma$}\label{cnsn}

\noindent
The conditions $\C(n)$ and $\Sigma(n)$ were defined in the introduction.
We begin with some observations about the case $n=1$. It is clear that
the o-minimality of  $\bk_{\ind}$ is equivalent to the condition that
$\Def^1(\bk_{\ind})$ is generated as a boolean algebra
by $\{\st(X):\ X\in \Def^1(R)\}$.
Next, we have the equivalence
$$ \boldsymbol{k}_{\ind} \text{ is o-minimal}\  \Longleftrightarrow\ \C(1).$$
The forward direction is obvious. For the converse, use that
$\boldsymbol{k}_{\ind}$ is weakly  o-minimal, by \cite{bp}, and
that if $Y\subseteq \bk$ is bounded and convex in $\bk$, and has neither
infimum nor supremum in $\bk$, then $Y$ is closed (and open) in $\bk$.

It follows easily by cell decomposition that $\Sigma (1)$ is equivalent
to the condition that for all definable $f \colon I\rightarrow I$
there is $\epsilon_0 \in \ma^{>0}$ such that
$\st f(\epsilon )=\st f(\epsilon_0 )$ for all $\epsilon \in \ma^{>\epsilon_0 }$.
Later in this section we prove that for all $n$ we have
$\C(n)\Rightarrow \Sigma(n)$, and so by the above this gives
$$ \bk_{\ind} \text{ is o-minimal}\  \Longrightarrow\ \Sigma(1),$$
but we do not know if the converse holds.
We have $\C(n) \Rightarrow \C(m)$ for $n>m$ since projection maps and
standard part maps commute. In particular, if $\C(n)$ holds for some
$n\ge 1$, then $\bk_{\ind}$  is o-minimal.

\medskip\noindent
It is convenient to introduce the following weak version $\C'(2)$ of $\C(2)$:

\smallskip
{\em for all continuous $\phi \colon I(\bk) \to \bk$ that are
definable in $\k_{\ind}$ there exists a

set $X\in \Def^2(R)$ such that $\Gamma(\phi)=\st(X)$}.

\smallskip\noindent
Using as above the weak o-minimality of $\bk_{\ind}$ we see that
$\C'(2)\Rightarrow \C(1)$, and in particular
$\C'(2)\Rightarrow  \bk_{\ind} \text{ is o-minimal}$. {\bf In the rest of this
section we assume that $\bk_{\ind}$  is o-minimal}.

\begin{lemma}\label{c2a} Let $\phi \colon I(\bk) \to \bk$ be continuous and definable in $\bk_{\ind}$, and
suppose $\Gamma(\phi)=\st(X)$ for some $X\in \Def^2(R)$. Then $\phi$
is induced by some $V$-bounded continuous definable $f\colon I \to
R$.
\end{lemma}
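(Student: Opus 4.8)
The plan is: replace $X$ by a subset $X^{*}\subseteq V^{2}$ with $\st(X^{*})=\Gamma(\phi)$; use definable choice in $R$ to extract from $X^{*}$ a definable partial function $g$ satisfying $\st g(x)=\phi(\st x)$; and then repair $g$ into an honest continuous function on all of $I$ by filling in infinitesimal gaps and smoothing away finitely many infinitesimal jumps.

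First the shrinking. Since $I(\bk)$ is closed and bounded and $\phi$ is continuous, $\phi$ is bounded, so fix $N\in\N$ with $|\phi(a)|<N$ for all $a\in I(\bk)$. Put $X^{*}:=X\cap\bigl(I\times[-N,N]\bigr)\in\Def^{2}(R)$; then $X^{*}\subseteq V^{2}$, and since $X^{*}\subseteq X\cap V^{2}$ and $X^{*}\subseteq I\times[-N,N]$ we get $\st(X^{*})\subseteq\st(X)\cap\bigl(I(\bk)\times[-N,N]\bigr)=\Gamma(\phi)$. For the reverse inclusion over the open interval, given $c\in(-1,1)$ pick $(x_{0},y_{0})\in X\cap V^{2}$ with $\st(x_{0},y_{0})=(c,\phi(c))$; since $\st|x_{0}|=|c|<1$ and $\st|y_{0}|=|\phi(c)|<N$ we get $|x_{0}|<1$ and $|y_{0}|<N$, so $(x_{0},y_{0})\in X^{*}$ and hence $(c,\phi(c))\in\st(X^{*})$. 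Consequently the definable set $D:=\{x\in R:(x,y)\in X^{*}\text{ for some }y\}\subseteq I$ satisfies $\st(D)\supseteq(-1,1)$; as $D$ is a finite union of points and intervals this forces every component of $I\setminus D$ to have infinitesimal length, i.e. $D$ is co-infinitesimal in $I$, and in particular for each point of $I$ there is a point of $D$ within $\m$ of it.

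Now extract and repair. By definable choice in $R$ there is a definable $g\colon D\to R$ with $(x,g(x))\in X^{*}$ for all $x\in D$; then $g$ takes values in $[-N,N]$, and $(\st x,\st g(x))\in\st(X^{*})\subseteq\Gamma(\phi)$ gives $\st g(x)=\phi(\st x)$ for all $x\in D$. Using the monotonicity theorem for $R$ together with a cell decomposition of $D$, pick $-1=d_{0}<d_{1}<\dots<d_{m}=1$ so that each $(d_{i-1},d_{i})$ is either contained in $D$ with $g$ continuous on it (a \emph{curve interval}) or disjoint from $D$ (a \emph{gap}); a gap $(d_{i-1},d_{i})$ must satisfy $\st d_{i-1}=\st d_{i}$, since otherwise a point of $D$ with standard part strictly between these would lie in it. Define a definable $f\colon I\to[-N,N]$ by: $f(d_{i}):=g(x_{i})$ for a fixed $x_{i}\in D$ with $x_{i}-d_{i}\in\m$; $f:=g$ on each curve interval; and $f:=$ the affine interpolation between $f(d_{i-1})$ and $f(d_{i})$ on each gap $[d_{i-1},d_{i}]$. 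Checking on each piece gives $\st f(x)=\phi(\st x)$ for all $x\in I$ — on a curve interval this is the property of $g$, while on a gap and at the $d_{i}$ every argument $x$ involved satisfies $x-d_{j}\in\m$ for the pertinent $j$, and all prescribed values of $f$ there have standard part $\phi(\st d_{j})$. This $f$ is continuous on each $(d_{i-1},d_{i})$ and can fail continuity only at the $d_{i}$, where the jumps are infinitesimal. To finish, for each $d_{i}$ where $f$ is discontinuous pick an infinitesimal $\nu_{i}>0$ with the closed intervals $[d_{i}-\nu_{i},d_{i}+\nu_{i}]$ pairwise disjoint, contained in $(-1,1)$, and meeting $\{d_{0},\dots,d_{m}\}$ only in $d_{i}$, and redefine $f$ there to be affine between $f(d_{i}-\nu_{i})$ and $f(d_{i}+\nu_{i})$ (a one-sided constant patch if $d_{i}=\pm1$). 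Since $d_{i}\pm\nu_{i}$ lie within $\m$ of $d_{i}$, the values $f(d_{i}\pm\nu_{i})$ have standard part $\phi(\st d_{i})$, so after this surgery $f\colon I\to[-N,N]$ is definable, continuous, and still satisfies $\st f(x)=\phi(\st x)$ for all $x\in I$. Taking $r=N$ exhibits $f$ as $V$-bounded, and $\st f(x)=\phi(\st x)$ for all $x\in I$ says exactly $\st(\Gamma(f))=\Gamma(\phi)$, i.e. $f$ induces $\phi$.

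The main obstacle is the final surgery: $g$ is only a definable choice function, defined off an infinitesimal set and possibly very discontinuous, and it must be promoted to a continuous function on all of $I$ without disturbing standard parts. What makes this manageable is to define every repair value — at the $d_{i}$ and at the endpoints of the smoothing patches — as a value of $g$ (or of the already-built $f$) at a nearby point, whose standard part is pinned to $\phi\circ\st$, rather than as a one-sided limit, whose standard part is not automatically controlled.
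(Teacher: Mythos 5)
Your proposal is correct and follows essentially the same route as the paper: shrink $X$ to a $V$-bounded subset of $I\times[-N,N]$, use definable choice to extract a partial section $g$ over a domain whose complement in $I$ consists of infinitesimal gaps, and then extend to all of $I$ by interpolation across those gaps. The only real difference is in how continuity is arranged: the paper first shrinks the domain to a closed set $P$ with $\st(P)=I(\bk)$ on which the choice function is already continuous and then fills in monotonically, whereas you allow $g$ to be discontinuous, build an $f$ that may jump at finitely many points $d_i$, and repair it by an explicit patching step on infinitesimal intervals around those $d_i$; both repairs work because every interpolated value is pinned between values whose standard part is $\phi(\st d_i)$.
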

\begin{proof} Take a $V$-bounded closed $X\subseteq I\times R$ with
$X\in \Def^2(R)$ such that $\Gamma(\phi)=\st(X)$. Let $p\colon R^2
\to R$ be the projection map given by $p(x,y)=x$. Then
$p(X)\subseteq I$ with $\st(p(X))=I(\bk)$, and by definable choice
we have a definable $h\colon p(X)\to R$ such that
$\Gamma(h)\subseteq X$. By the piecewise continuity of $h$ we can
shrink $p(X)$ slightly to a closed definable set $P\subseteq p(X)$
such that $\st(P)=I(\bk)$ and $g:= h|P$ is continuous. Since
$\st(\Gamma(g))\subseteq \Gamma(\phi)$, it follows that
$\st(\Gamma(g))=\Gamma(\phi)$. In particular, for $a,b\in P$ with $a
< b$ and $(a,b)\cap P=\emptyset$, we have $\st(a)=\st(b)$ and
$\st(g(a))=\st(g(b))$. It is easy to extend $g$ to a continuous
definable $f\colon I \to R$ such that for all $a,b\in P$ as before
$f$ is monotone on $[a,b]$. It follows that $f$ is $V$-bounded and
$\st(\Gamma(f))=\Gamma(\phi)$, so $f$ induces $\phi$.
\end{proof}

\begin{lemma}\label{c2b} Suppose $\C'(2)$ holds. Then each closed bounded definable $Y\in \Def^n(\bk_{\ind})$ with $\dim Y \le 1$ equals $\st(X)$ for some
$X\in \Def^n(R)$.
\end{lemma}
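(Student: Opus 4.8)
The plan is to reduce the general statement to the one-dimensional graph case already handled by $\C'(2)$ (via Lemma~\ref{c2a}), by dissecting $Y$ into finitely many pieces each of which is, up to a coordinate permutation, the graph of a continuous function on a closed bounded interval. First I would apply cell decomposition in $\bk_{\ind}$ (legitimate, since $\bk_{\ind}$ is o-minimal by our standing assumption) to write $Y$ as a finite disjoint union of cells of dimension $0$ or $1$. A $0$-cell is a single point $b\in\bk^n$; since $Y$ is bounded we have $b\in I(\bk)^n$ after rescaling, and $b=\st(a)$ for a suitable $a\in V^n$, so the singleton $\{b\}=\st(\{a\})$ is of the required form. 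Thus the work is in the $1$-cells. Because $\st$ commutes with projections, it suffices to treat each such piece separately: if $Y=\bigcup_i Y_i$ and each $Y_i=\st(X_i)$ with $X_i\in\Def^n(R)$, then $Y=\st(\bigcup_i X_i)$.

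So fix a $1$-dimensional cell $C\subseteq Y$. After permuting coordinates, $C$ projects homeomorphically onto an open interval $J$ in the first coordinate via the restriction $\pi$ of $(x_1,\dots,x_n)\mapsto x_1$, and $C=\Gamma(\psi)$ for a continuous definable $\psi\colon J\to\bk^{n-1}$ (its components $\psi_2,\dots,\psi_n$ being continuous definable functions on $J$). Here the closure and boundedness of $Y$ — together with o-minimality — let me assume $J=(c,d)$ is bounded and that $\psi$ extends continuously to $\overline{J}=[c,d]$; indeed $\closure{C}\subseteq Y$ is again closed bounded of dimension $\le 1$, and its extra points over $c$ and $d$ are the two singletons handled above. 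After an affine change of coordinates on $\bk$ (which lifts to an affine change over $R$ with coefficients in $V$, hence is harmless for the $\st$-correspondence) I may take $[c,d]=I(\bk)$, so each $\psi_j$ is a continuous definable map $I(\bk)\to\bk$ — and, rescaling, into $I(\bk)$.

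Now I apply $\C'(2)$ to each $\psi_j$: there is $X^{(j)}\in\Def^2(R)$ with $\Gamma(\psi_j)=\st(X^{(j)})$, and then Lemma~\ref{c2a} upgrades this to a $V$-bounded continuous definable $f_j\colon I\to R$ inducing $\psi_j$. Assembling $f=(f_2,\dots,f_n)\colon I\to R^{n-1}$, the graph $\Gamma(f)\subseteq R\times R^{n-1}$ is $V$-bounded, definable in $R$, and satisfies $\st(\Gamma(f))=\Gamma(\psi)=C$ (up to the coordinate permutation, which I undo). Unwinding all the reductions — reinstating the coordinate permutation and the inverse of the affine rescaling, both of which are $V$-affine and so commute with $\st$ — gives a set $X_C\in\Def^n(R)$ with $\st(X_C)=\closure{C}$, and taking the union over the finitely many cells $C$ together with the finitely many singleton pieces yields the desired $X\in\Def^n(R)$ with $\st(X)=Y$.

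The step I expect to be the main obstacle is the bookkeeping around the \emph{closure}: a $1$-cell $C$ in the decomposition of $Y$ need not itself be closed, and I must argue that its limit points are already accounted for as lower-dimensional cells of $Y$ (which they are, since $Y$ is closed and $\closure{C}\setminus C$ has dimension $0$), and that the continuous extension of $\psi$ to the compact interval is available — this is where o-minimality of $\bk_{\ind}$ and the monotonicity/limit machinery are used. The only other point requiring a little care is checking that the affine coordinate changes used to normalize the domain and range to $I(\bk)$ have representatives over $V$, so that they lift along $\st$; since the endpoints $c,d$ and the rescaling bounds can be chosen in $\st(V)=\bk$, this is routine.
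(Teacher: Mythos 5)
Your proof is correct and follows essentially the same route as the paper's: decompose $Y$ (up to coordinate permutations) into graphs of continuous definable $\bk_{\ind}$-maps on closed bounded intervals, normalize to $I(\bk)$, apply $\C'(2)$ together with Lemma~\ref{c2a} component-wise, and recombine. The only blemish is the throwaway justification ``$\st$ commutes with projections'' where you mean it commutes with finite unions, but your very next sentence supplies the correct reason, so nothing is actually missing.
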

\begin{proof} This is clear for $n=1$, so let $Y\in \Def^n(\bk_{\ind})$ be
closed and bounded with $\dim Y \le 1$, $n>1$. For a permutation $\sigma$
of $\{1,\dots,n\}$ and $Z\subseteq \bk^n$ we put
$$\sigma(Z):= \{(a_{\sigma(1)},\dots, a_{\sigma(n)}):\ (a_1,\dots, a_n)\in Z\}.$$
The assumptions on $Y$ yield that $Y$ is a finite union of sets
$\sigma(\Gamma(\phi))$ where $\phi \colon [a,b] \to \bk^{n-1}$ is
continuous and definable in $\bk_{\ind}$, $a \le b$ in $\bk$, and
$\sigma$ is a permutation of $\{1,\dots,n\}$. So in order to show
that $Y=\st(X)$ for some definable $X\in \Def^n(R)$ we can assume
that $Y=\Gamma(\phi)$ where $\phi \colon [a,b] \to \bk^{n-1}$ is
continuous and definable in $\bk_{\ind}$ and $a \le b$ in $\bk$. If
$a=b$, then $Y$ is a singleton, and there is no problem, so we can
assume $a < b$. Then we can reduce to the case that $a=0, b=1$, so
$\phi= (\phi_1,\dots, \phi_{n-1}) \colon I(\bk) \to \bk^{n-1}$.  By
the previous lemma and the hypothesis of the present lemma we have
$V$-bounded definable continuous $f_1,\dots, f_{n-1}\colon I \to R$
that induce $\phi_1,\dots, \phi_{n-1}$, so $f=(f_1,\dots,
f_{n-1})\colon I \to R^{n-1}$ induces $\phi$, and thus
$\st(\Gamma(f))=\Gamma(\phi)=Y$, as desired.
\end{proof}

\noindent
The next two definitions and Lemma~\ref{hl} use only that $R$ is an o-minimal field
and do not require a proper convex subring $V$ of $R$, so they apply also to
$\bk_{\ind}$ (which we have assumed to be o-minimal).
For results about hausdorff limits of definable families in an
o-minimal expansion of the real field, see \cite{hlimit}. The notion
of hausdorff distance is also useful in our setting of an arbitrary
o-minimal field.

\begin{definition} Let $n\ge 1$ and put $d(x,Y):= \inf\{d(x,y): y\in Y\}$ for $x\in R^n$ and nonempty definable $Y\subseteq R^n$. Next, let $X,Y \subseteq R^n$ be definable, closed, bounded and nonempty.
Then the {\em hausdorff distance\/} between $X$ and $Y$ is defined
to be
$$d_H (X,Y):= \min\{r\geq 0: d(x,Y), d(y,X)\leq r
\mbox{ for all }x\in X \mbox{ and all }y\in Y   \}.$$
\end{definition}

\noindent
With $X$,$Y$ as in this definition, note that $d_H(X,Y)\in
R^{\ge 0}$, $d_H(X,Y)=0$ iff $X=Y$, $d_H (X,Y) = d_H (Y,X)$, and
whenever $Z \subseteq R^n$ is also definable, closed, bounded, and
nonempty, then
$$d_H (X,Z)\le d_H (X,Y) + d_H (Y,Z).$$ So $d_H$ behaves like a
metric (but takes values in $R^{\ge 0}$ rather than $\R^{\ge 0}$).

\begin{definition}
Let $n\ge 1$ and let $X\subseteq R^{1+n}$ be definable such that
the set $X(t)\subseteq R^n$ is closed,
bounded and nonempty for all sufficiently small $t>0$.
Then a {\em hausdorff limit\/} of $X(t)$ as $t
\to 0+$ is a definable, closed, bounded, and nonempty set
$H\subseteq R^n$ such that $\lim_{t \to 0+} d_H(X(t), H)=0$.
\end{definition}

\begin{lemma}\label{hl}
Let $n\ge 1$, and let $X\subseteq R^{1+n}$  be definable such that
$X(t)$ is closed, bounded and nonempty for all sufficiently small $t>0$.
Then there is a unique hausdorff limit of $X(t)$ as $t\to 0+$. Moreover,
if $\dim X(t) \le m$ for all $t>0$, then $\dim H \le m$ for this hausdorff limit $H$.
\end{lemma}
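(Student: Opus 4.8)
The plan is to reduce the existence of the hausdorff limit to the monotonicity/boundedness machinery available in an o-minimal field, working coordinate by coordinate on the ``distance profile'' of the family. First I would fix a $V$-free (purely o-minimal) viewpoint, since the statement only concerns $R$ as an o-minimal field. For $t>0$ small, $X(t)$ is closed, bounded and nonempty, so for each point $a\in R^n$ the function $t\mapsto d(a,X(t))$ is well defined on an interval $(0,t_0)$; by o-minimality it is eventually monotone and, being a definable function on a bounded interval with values controlled by the $V$-boundedness of the family (uniformly on a small interval — this uses that $X$ is a single definable set, so $\bigcup_{0<t<t_0} X(t)$ is definable and bounded), it has a limit in $R$ as $t\to 0+$. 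Define $H$ to be the set of $a$ for which this limit is $0$, i.e. $H:=\{a\in R^n:\ \lim_{t\to 0+} d(a,X(t))=0\}$. This $H$ is definable (the limit condition is first-order over $R$, using o-minimality to turn ``$\lim=0$'' into ``$\forall \varepsilon>0\ \exists t_1>0\ \forall t\in(0,t_1)\ d(a,X(t))<\varepsilon$'') and clearly closed; boundedness of $H$ follows from the uniform boundedness of the family. Nonemptiness needs an argument: pick any definable selection $t\mapsto x(t)\in X(t)$ (definable choice), note each coordinate $x_i(t)$ is eventually monotone and bounded hence converges, so $x(0+):=\lim_{t\to 0+}x(t)$ exists and lies in $H$.

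Next I would prove $\lim_{t\to 0+} d_H(X(t),H)=0$. One direction is that $\sup_{a\in H} d(a,X(t))\to 0$: fix $a\in H$; then $d(a,X(t))\to 0$ by construction, but we need uniformity in $a\in H$. Here I would use o-minimality (curve selection / the monotonicity theorem applied to the definable function $t\mapsto \sup_{a\in H}d(a,X(t))$) together with a compactness-style argument in the o-minimal setting: the function $t\mapsto \sup_{a\in H} d(a,X(t))$ is definable, eventually monotone, and if its limit were some $\eta>0$ we could extract (by definable choice) a curve $a(t)\in H$ with $d(a(t),X(t))\ge\eta/2$; letting $a^*=\lim_{t\to0+}a(t)\in H$ and using $a^*\in H$ gives $d(a^*,X(t))\to0$, while the triangle inequality $d(a(t),X(t))\le d(a(t),a^*)+d(a^*,X(t))\to 0$ yields a contradiction. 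The other direction, $\sup_{x\in X(t)} d(x,H)\to 0$, is the more delicate half: given $\eta>0$ I must show that for all small $t$, every point of $X(t)$ is within $\eta$ of $H$. Suppose not; then by definable choice there is a curve $x(t)\in X(t)$ with $d(x(t),H)\ge\eta$ for $t$ on a small interval. Each coordinate of $x(t)$ is eventually monotone and bounded, so $x^*:=\lim_{t\to0+}x(t)$ exists; I would then show $x^*\in H$, which contradicts $d(x(t),H)\ge\eta$ once $t$ is small enough that $d(x(t),x^*)<\eta$. To see $x^*\in H$: for small $s>0$, $x(s)\in X(s)$, so $d(x^*,X(s))\le d(x^*,x(s))\to 0$ as $s\to0+$, hence $\lim_{s\to0+}d(x^*,X(s))=0$, i.e. $x^*\in H$. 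This is the step I expect to be the main obstacle, because it requires being careful that the ``limit point of a definable curve in the family'' really does land in the set $H$ that we defined abstractly via the distance function — essentially an interchange-of-limits argument that must be justified by o-minimality (two-variable monotonicity / the fact that definable functions of two variables are piecewise monotone in each, so iterated limits agree) rather than by topological compactness, which is unavailable.

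For uniqueness: if $H$ and $H'$ are both hausdorff limits then $d_H(H,H')\le d_H(H,X(t))+d_H(X(t),H')\to 0$, so $d_H(H,H')=0$, and since $H,H'$ are closed, bounded and nonempty, $d_H(H,H')=0$ forces $H=H'$ by the remark following the definition of $d_H$.

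For the dimension bound: assume $\dim X(t)\le m$ for all small $t>0$ and suppose $\dim H\ge m+1$. By o-minimal cell decomposition $H$ contains an open $(m{+}1)$-cell $U$, i.e. (after a coordinate projection $\pi$) $\pi(U)$ contains a nonempty open box $B\subseteq R^{m+1}$. I would consider the definable set $Y:=\{(t,x):\ 0<t<t_0,\ x\in X(t)\}\subseteq R^{1+n}$ and its ``directional limit'' behaviour: for each $a\in U$ and each $\varepsilon>0$ there is, for all small $t$, a point of $X(t)$ within $\varepsilon$ of $a$; choosing $\varepsilon$ small relative to the box and varying $a$ over $B$, a fiberwise-dimension count (Lemma on dimensions of definable families in \cite{book}, Ch.~4) forces $\dim X(t)\ge m+1$ for $t$ in a set of positive length near $0$, contradicting the hypothesis. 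Concretely I would: (i) shrink to a definable continuous section $a\mapsto (t_a,\text{curve})$ is awkward, so instead (ii) use the set $Z:=\{(t,a)\in (0,t_0)\times B:\ d(a,X(t))<\varepsilon\}$, observe $Z(a)$ contains an interval $(0,t_1(a))$ for each $a\in B$ (since $a\in H$), hence $\dim Z = 1+(m+1)$, so by the fiber-dimension theorem for a generic small $t$ the fiber $Z(t)=\{a\in B: d(a,X(t))<\varepsilon\}$ has dimension $m+1$, and finally (iii) since every point of $Z(t)$ is within $\varepsilon$ of $X(t)$ and $Z(t)$ is $(m{+}1)$-dimensional, a standard o-minimal argument (covering an $(m{+}1)$-cell by $\varepsilon$-neighbourhoods of an $\le m$-dimensional set is impossible for small $\varepsilon$, by the cell decomposition / triangulation of $X(t)$) gives $\dim X(t)\ge m+1$, the desired contradiction. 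I expect step (iii) to need a short separate lemma to the effect that if $A\subseteq R^N$ is definable with $\dim A\le m$ then $\{x: d(x,A)<\varepsilon\}$ has empty interior in every $(m{+}1)$-dimensional affine subspace for suitable $\varepsilon$ — or more simply, that $\dim\{x:d(x,A)<\varepsilon\}=\dim A$ is false, so I would instead argue via Lebesgue-type covering replaced by the o-minimal fact $\dim\overline{A}=\dim A$ together with letting $\varepsilon\to0$ and taking a hausdorff limit of the $\varepsilon$-neighbourhoods, closing the argument recursively.
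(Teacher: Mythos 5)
Your existence and uniqueness arguments are essentially the paper's. Your set $H=\{a:\lim_{t\to 0+}d(a,X(t))=0\}$ coincides with the paper's $H=\cl(Y)(0)$, where $Y:=\{(t,x)\in X:\ t>0\}$, because by o-minimality $t\mapsto d(a,X(t))$ is eventually monotone, so $\liminf$ and $\lim$ agree; and your two contradiction cases (a curve $a_t\in H$ staying far from $X(t)$, a curve $b_t\in X(t)$ staying far from $H$) are exactly the two cases in the paper, resolved by definable choice and taking the limit as $t\to 0+$ in the same way. So up through uniqueness the proposal is correct and follows the paper's route.

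The dimension bound is where the proposal breaks down, and you in fact flag the problem yourself. Your step (iii) would need that an $\varepsilon$-neighbourhood of a set of dimension $\le m$ cannot contain a definable set of dimension $m+1$; as you correctly note, this is simply false (a fixed $\varepsilon$-neighbourhood of an $m$-dimensional set is open in its ambient space and so has full dimension there). The proposed repair --- ``letting $\varepsilon\to 0$ and taking a hausdorff limit of the $\varepsilon$-neighbourhoods, closing the argument recursively'' --- is not an argument: the ``generic $t$'' produced by the fibre-dimension count depends on $\varepsilon$, so you cannot pass to the limit in $\varepsilon$ while holding $t$ fixed, and invoking hausdorff limits here is circular since that is what the whole lemma is about. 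The paper avoids all of this with a one-line observation: with $Y:=\{(t,x)\in X:\ t>0\}$, the hypothesis $\dim X(t)\le m$ gives $\dim Y\le m+1$ by the fibre-dimension theorem, and $\{0\}\times H\subseteq\cl(Y)\setminus Y$; since the frontier of a definable set has strictly smaller dimension than the set (Chapter~4 of \cite{book}), $\dim H\le\dim\big(\cl(Y)\setminus Y\big)\le\dim Y-1\le m$. That frontier inequality is the key fact your proposal was groping for, and once you use it the whole $\varepsilon$-neighbourhood machinery becomes unnecessary.
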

\begin{proof}
If $H,H'$ are hausdorff limits of $X(t)$ as
$t\rightarrow 0+$, then $d_H (H,H')=0$, hence $H=H'$. So there is at most one hausdorff
limit of $X(t)$ as $t\rightarrow 0+$.

To show existence, let
$Y:= \{(t,x)\in X:\ t>0\}$ and $H:= \closure{Y}(0)\subseteq R^n$.
It is clear that $H$ is definable, closed, bounded, and, by cell
decomposition, $H$ is nonempty. We claim that $\lim_{t\to 0+}d_H
(H,X(t))=0$. Suppose not. Then we have $\delta
>0$ such that for every $s>0$ there is a positive $t<s$ with $d_H
(H,X(t))
>\delta$. By o-minimality we can take $s>0$ such that
either there is for every $t\in
(0,s)$ a point $a_t \in H$ with $d(a_t , X(t))>\delta$, or
there is for every $t\in (0,s)$ a point $b_t \in X(t)$ with
$d (b_t ,H)>\delta$. In the first case we can assume by definable choice that
$t \mapsto a_t \colon (0,s)\rightarrow H$ is definable; set
$a:= \lim_{t\to 0+} a_t$. Then
$a\in H$ since $H$ is closed, but it is also easy to check that $(0,a)\notin \cl(Y)$,
so $a\notin H$, a contradiction.
In the second case we can assume that $t\mapsto b_t \colon (0,s) \to R^n$ is definable; set $b:= \lim_{t\to 0+} b_t$. Then $(0,b)\in \cl(Y)$, so $b\in H$, but also
$d(b,H) \ge \delta$, a contradiction.

Suppose now that $\dim X(t) \le m$ for all $t>0$. Then for $Y\subseteq X$ and $H=\cl(Y)(0)$ as above we have $\{0\}\times H\subseteq \cl(Y) \setminus Y$, so
$\dim H \le m$.
\end{proof}

\medskip\noindent
In the rest of this section $s, s',s_0, s_1, s_2$ range over $I$.

\begin{lemma}\label{tlemma} Let $s_0\in \ma$ and $s_1>\ma$, and suppose
$f \colon (s_0, s_1) \to R$ is definable and $f(s)\in \ma$ for all $s$ with $\ma < s < s_1$. Then there is a
$p\in \ma^{>s_0}$ such that $f(s)\in \ma$ for all $s$ with $p\le s
<s_1$.
\end{lemma}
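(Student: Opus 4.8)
The plan is to reduce everything to a single definable subset of $R$ and then apply o-minimality of $R$. Put
$$Y := \{\, s \in (s_0,s_1) : |f(s)| \ge s - s_0 \,\}\subseteq R,$$
a set definable in $R$ (with parameters $s_0$ and those of $f$). The point of the summand $s-s_0$ is that it is "appreciable on the appreciable part of $(s_0,s_1)$ and infinitesimal on the infinitesimal part". Indeed, first I would observe that every $s\in (s_0,s_1)$ is bounded — since $s_0\in\ma$ and $s\le s_1\le 1$ — so $\st(s)$ is defined, and since $s>s_0$ we have $\st(s)\ge \st(s_0)=0$. Hence each $s\in(s_0,s_1)$ either has $\st(s)>0$, i.e. $s>\ma$, or else $\st(s)=0$, i.e. $s\in\ma$. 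If $s>\ma$ (so also $s<s_1$), then $f(s)\in\ma$ by hypothesis while $s-s_0$ is appreciable and positive (its standard part is $\st(s)>0$), so $|f(s)|<s-s_0$ and $s\notin Y$. Therefore $Y\subseteq \ma^{>s_0}$.

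Next I would set $q:=\sup Y$ when $Y\neq\emptyset$ and $q:=s_0$ otherwise. In the first case $Y$ is a nonempty bounded definable subset of $R$, hence (by o-minimality) a finite union of points and intervals, so $q\in R$; and since all elements of $Y$ lie in $\ma$, the supremum $q$ must also lie in $\ma$ — otherwise $q$ would be appreciable and, being the supremum, there would be elements of $Y$ just below $q$ which are appreciable as well, contradicting $Y\subseteq\ma$. So in all cases $q\in\ma$ and $q\ge s_0$. Now choose any $\varepsilon\in\ma^{>0}$ and set $p:=q+\varepsilon$; then $p\in\ma$, $p>s_0$, and $p<s_1$ since $s_1>\ma$. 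It remains to check that $f(s)\in\ma$ for all $s$ with $p\le s<s_1$. If such an $s$ satisfies $s>\ma$, this is precisely the hypothesis. Otherwise $s\in\ma$, and then $s\ge p>q=\sup Y$ forces $s\notin Y$, i.e. $|f(s)|<s-s_0$; but $s-s_0\in\ma$ (a difference of elements of $\ma$), so $|f(s)|\in\ma$, as needed.

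I do not expect a genuine obstacle in this argument: the only thing that needs care is the choice of the auxiliary "modulus" $s\mapsto s-s_0$ — one wants a definable function that is provably larger than $|f(s)|$ on appreciable $s$ (forcing $s\notin Y$ there, so that $Y$ is trapped inside $\ma$) and provably infinitesimal on infinitesimal $s$ (so that failure of the defining inequality of $Y$ yields $f(s)\in\ma$); for instance $s\mapsto |s|$ serves just as well. Once that is in place, the proof uses only the elementary facts that a bounded definable subset of $R$ has a supremum in $R$, that this supremum lies in $\ma$ if the set does, and the basic convexity properties of $\ma$ relating it to $\st$.
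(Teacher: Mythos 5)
Your proof is correct, and it takes a genuinely different route from the paper. The paper reduces to the case that $f$ is of class $C^1$ and then applies the mean value theorem twice: first to show $f'$ is bounded on an interval $[p,q]$ straddling the cut $\ma$ (with $p\in\ma$ and $q>\ma$), and then to transfer the bound $f(s')\in\ma$ for appreciable $s'$ down to infinitesimal $s\ge p$ via $|f(s')-f(s)|\le s'-s$. Your argument is more elementary and avoids differentiability altogether: you package the "bad" points into the definable set $Y=\{s: |f(s)|\ge s-s_0\}$, observe that the comparison function $s\mapsto s-s_0$ is appreciable exactly where $s$ is (so the hypothesis forces $Y\subseteq\ma$), and then exploit the basic o-minimal fact that a bounded nonempty definable set has a supremum in $R$, together with the convexity of $\ma$, to conclude $\sup Y\in\ma$; picking $p$ infinitesimally above $\sup Y$ finishes the proof. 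The one place to be careful, which you handle correctly, is that $\ma$ itself has no supremum in $R$, so one cannot conclude $\sup Y\in\ma$ merely from $Y\subseteq\ma$ without an argument — your observation that an appreciable supremum would force appreciable elements of $Y$ just below it is exactly what is needed. Your proof buys a cleaner, purely order-theoretic argument that does not invoke $C^1$ cell decomposition or the mean value theorem; the paper's proof is perhaps more in the spirit of how such finiteness-across-the-cut lemmas are often proved in the $T$-convexity literature, but neither approach is shorter in any essential way.
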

\begin{proof} We can assume that $f$ is of class $C^1$. For $\ma < s < s_1$
we have
$$f(s) - f(s/2)= (s/2)f'(s')$$ with $s'\in [s/2,s]$, so $f'(s')\in
\ma$. It follows that the definable set $$\{x\in (s_0,s_1):\
|f'(x)|\le 1\}$$ contains a set $[p,q]$ with $s_0 < p \in \ma$ and
$\ma < q < s_1$. Let $s$ with $p \le s \in \ma$ be fixed, and take a
variable $s'$ with $\ma < s' \le q$. Then $f(s')-f(s) = (s'-s)f'(x)$
with $s\le x \le s'$, so $|f(s')-f(s)|\le s'-s \le s'$. Since
$f(s')\in \ma$ and we can take $s'$ arbitrarily small, subject to
$s'>\ma$, we obtain $f(s)\in \ma$.
\end{proof}

\noindent
More than the result itself, the {\em proof\/} of the following is crucial.

\begin{lemma}\label{closed then sigma} For all $n\ge 1$ we have
$\mathcal{C}(n)\ \Longrightarrow\ \Sigma(n)$.
\end{lemma}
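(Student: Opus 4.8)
\emph{Plan.} Fix definable $X\subseteq I^{1+n}$; we must find $\epsilon_0\in\ma^{>0}$ with $\st X(\epsilon_0)=\st X(\epsilon)$ for all $\epsilon\in\ma^{>\epsilon_0}$. First I would translate this into Hausdorff distance: for closed bounded definable $P,Q\subseteq V^n$ one has $\st P=\st Q$ iff $d_H(P,Q)\in\ma$. Indeed, if $\st P=\st Q$ then every $x\in P$ has $d(x,Q)\in\ma$ (pick $y\in Q$ with $\st y=\st x$), so the definable number $\sup_{x\in P}d(x,Q)$, attained by o-minimality of $R$, lies in $\ma$; symmetrically for $\sup_{y\in Q}d(y,P)$; the converse is clear. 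Replacing each $X(t)$ by $\cl X(t)$ (this does not change $\st X(t)$) and discarding the trivial case in which $X(\epsilon)=\emptyset$ for all $\epsilon\in\ma$ near $\sup\ma$, I may assume $X(t)$ is closed, bounded and nonempty for $t$ in an interval $(c,v)$ with $c\in\ma^{\ge0}$, $v\in V$ and $v>\ma$, so that $(c,\sup\ma)=\ma^{>c}$. Since $\C(n)$ holds and $n\ge1$, $\bk_{\ind}$ is o-minimal, so a bounded definable subset of $\bk$ has a supremum in $\bk$, and a closed such subset contains it; I use this throughout.

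The case $n=1$ is the model. By the reduction to functions recalled above (or by cell decomposition in $R$), I may assume $X(t)$ is the graph of a monotone, say increasing, continuous definable $f$ on $(c,c')$ with $v\in(\sup\ma,c')$. Put $A:=X\cap\bigl((c,v)\times I\bigr)=\{(t,f(t)):c<t<v\}$, definable and $V$-bounded; then $\st A$ is a closed subset of $\bk^2$ definable in $\bk_{\ind}$, so its fibre $(\st A)(0)=\{\st f(\epsilon):\epsilon\in\ma^{>c}\}$ is closed, bounded and definable in $\bk_{\ind}$. Hence $C:=\sup(\st A)(0)$ lies in $\bk$ and in $(\st A)(0)$, say $C=\st f(\epsilon_*)$ with $\epsilon_*\in\ma^{>c}$. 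Monotonicity now finishes: for $\epsilon\in\ma^{>\epsilon_*}$ we have $f(\epsilon)\ge f(\epsilon_*)$, so $\st f(\epsilon)\ge C$, while $\st f(\epsilon)\le\sup(\st A)(0)=C$; thus $\st X(\epsilon)=\{C\}$ for all such $\epsilon$.

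For general $n$ the stable candidate is obtained the same way, with the supremum replaced by a Hausdorff limit. With $A:=X\cap\bigl((c,v)\times I^n\bigr)$, the set $\st A$ is closed in $\bk^{1+n}$ and definable in $\bk_{\ind}$, and its fibres $(\st A)(b)$ are closed, bounded and nonempty for all small $b>0$ in $\bk$; so by Lemma~\ref{hl}, applied in the o-minimal $\bk_{\ind}$, the Hausdorff limit $H:=\lim_{b\to 0+}(\st A)(b)$ exists and is closed, bounded, nonempty and definable in $\bk_{\ind}$. (For $n=1$ this recovers $H=\{C\}$: each fibre $(\st A)(b)$ with $b>0$ lies in $[C,\infty)$ because $f$ is increasing, and continuity of $f$ across the cut $\sup\ma$ forces these fibres to collapse to $\{C\}$ as $b\to 0+$; note the naive guess $(\st A)(0)=\st\bigl(\bigcup_{\epsilon\in\ma}X(\epsilon)\bigr)$, which for $n=1$ is the whole interval $[\inf(\st A)(0),C]$, is \emph{not} the stable value.) Now $\C(n)$ enters in full: $H$ is closed, bounded and definable in $\bk_{\ind}$, so there is $Z\in\Def^n(R)$, taken closed, bounded and $\subseteq I^n$, with $\st Z=H$. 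It then suffices to find $\epsilon_0\in\ma^{>0}$ with $d_H(X(\epsilon),Z)\in\ma$ for all $\epsilon\in\ma^{>\epsilon_0}$; by the first paragraph this gives $\st X(\epsilon)=\st Z=H$ for all such $\epsilon$, i.e.\ $\Sigma(n)$.

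The last step is the main obstacle. One half, $\sup_{x\in X(\epsilon)}d(x,Z)\in\ma$, ought to follow from the construction of $H$ and the closedness of $\st A$ and its fibres, by showing $\st X(\epsilon)\subseteq H$ once $\epsilon$ is close enough to $\sup\ma$. The other half, $\sup_{z\in Z}d(z,X(\epsilon))\in\ma$ near $\sup\ma$, is where the ad hoc monotonicity used when $n=1$ must be replaced: one analyses the definable maps $\epsilon\mapsto\sup_{z\in Z}d(z,X(\epsilon))$ and $(\epsilon,\epsilon')\mapsto d_H(X(\epsilon),X(\epsilon'))$ on $(c,v)$, uses the o-minimal fine structure (cell decomposition, definable triviality, uniform continuity on closed bounded intervals) of the family $t\mapsto X(t)$ in $R$ near the cut $\sup\ma$ to bound them there, and once more uses closedness of the relevant standard parts together with definable completeness of $\bk_{\ind}$ to see the controlling quantity is attained at some $\epsilon_*\in\ma^{>0}$, past which $X(\epsilon)$ is Hausdorff-constant modulo $\ma$. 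Identifying precisely which triviality statement in $R$ does this, and matching the resulting set with $Z$, is the crux.
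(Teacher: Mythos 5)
Your setup matches the paper's: you take the Hausdorff limit $H$ (the paper's $Q$) of $\st X(t)$ as $t\to 0^+$ in $\bk_{\ind}$, invoke $\C(n)$ to pick a closed definable $Z$ (the paper's $P$) with $\st Z=H$, and then aim to show $d_H(X(\epsilon),Z)\in\ma$ for $\epsilon$ eventually in $\ma$. Your $n=1$ supremum argument is correct but, as you note, monotonicity-dependent and not the path taken. The real issue is that the step you call ``the crux'' is precisely where the work lies, and you have left it undone: you gesture at ``definable triviality'' and ``uniform continuity on closed bounded intervals,'' but these are not the tools that close the gap, and no concrete argument is given.

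Here is what is actually needed, and what the paper does. First, one proves a \emph{Claim}: for every $\delta>\ma$ there is $s'>\ma$ such that $d_H(X(s),P)<\delta$ for all $\ma<s<s'$. The proof is by contradiction and uses definable choice twice --- once in $R$ to produce a map $s\mapsto p(s)\in P$ witnessing $d(X(s),p(s))\ge\delta$, and once in $\bk_{\ind}$ to get a corresponding $t\mapsto q(t)\in Q$; taking $q_0=\lim q(t)\in Q$ and lifting it to $p_0\in P$ yields a contradiction with $q_0\in Q=H$. This Claim bounds $d_H(X(s),P)$ infinitesimally for $s$ \emph{just above} $\ma$, but it says nothing yet about any specific $\epsilon_0\in\ma$. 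Second --- and this is the subtle point you cannot bypass --- one must push that bound \emph{into} $\ma$ and exhibit $\epsilon_0\in\ma^{>0}$. The paper sets $f(s)=d_H(X(s),P)$, arranges it continuous and monotone on an interval $[s_0,s_1]$ straddling the cut $\sup\ma$, and splits cases. Increasing is immediate. For decreasing, one needs Lemma~\ref{tlemma}: a definable $f$ with $f(s)\in\ma$ for all $\ma<s<s_1$ already satisfies $f(s)\in\ma$ for $s$ in some interval $[p,s_1)$ with $p\in\ma$. That lemma is proved by a mean-value-theorem / $C^1$ reduction and is the genuinely nontrivial technical input; nothing like it appears in your outline. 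Without it (or an equivalent device) you have no way to extract a fixed $\epsilon_0\in\ma^{>0}$, which is exactly what $\Sigma(n)$ demands.
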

\begin{proof} Let $n\ge 1$, and assume $(R,V)$ satisfies $\mathcal{C}(n)$.
Let $X\subseteq I^{1+n}$ be
definable. Our job is to show the existence of $\epsilon_0 \in \ma^{>0}$
such that $\st X(\epsilon_0 )
=\st X(\epsilon )$ for all $\epsilon\in \ma^{>\epsilon_0 }$. We can assume that
$X(s)\ne \emptyset$ for all $s\in I$. Put $Y:= \st X \subseteq
I(\boldsymbol{k})^{1+n}$. Let $Q\subseteq I(\boldsymbol{k} )^n$ be
the hausdorff limit of $Y(t)$ as $t>0$ tends to $0$ in
$\boldsymbol{k}$, so $Q$ is definable in
$\boldsymbol{k}_{\text{ind}}$, and $Q$ is closed, bounded, and nonempty.

Using $\mathcal{C}(n)$ we can take a closed definable $P\subseteq I^n$ such that $\st P=Q$.

\smallskip\noindent{\em Claim.} Let $\delta\in R^{>\ma}$. Then there
is an $s'>\ma$ such that $d_H(X(s),P) < \delta$ for all $s$ with
$\ma < s < s'$.

\smallskip\noindent
Suppose the claim is false. This gives $s_0,s_1$ with $s_0\in \ma < s_1$
such that \begin{align*} d_H(X(s),P)&\ge \delta\
\text{ for all }s\in (s_0, s_1),\\
 d_H(Y(t), Q) &< \st (\delta)\
 \text{ for all $t$ with }0<t\le t_1:=\st (s_1).
\end{align*}
Let $\ma < s < s_1$. Then $d_H(X(s),P) \ge \delta$ gives either an
$x\in X(s)$ with $d(x,P) \ge \delta$, or a point $p\in P$ with
$d(X(s),p) \ge \delta$. But $x\in X(s)$ with $d(x,P)\ge \delta$
would give $\st (x)\in Y(\st (s) )$ with $0< \st (s) \le t_1$ and
$d(\st (x) ,Q) \ge \st (\delta)$, a contradiction. Thus $d(X(s),p)
\ge \delta$ for some $p\in P$.

By increasing $s_0$ we can therefore arrange that for each $s\in
(s_0,s_1)$ there is $p\in P$ with $d(X(s),p)\ge \delta$. Definable
choice gives a definable function $p\colon (s_0, s_1) \to P$ such
that $d(X(s),p(s)) \ge \delta$ for all $s\in (s_0,s_1)$. Definable
choice in the structure $\boldsymbol{k}_{\text{ind}}$ then gives a
function $q \colon (0, t_1) \to Q$, definable in
$\boldsymbol{k}_{\text{ind}}$, such that $(t, q(t))\in \st
(\Gamma(p))$ for all $t\in (0, t_1)$. Let $q_0:= \lim_{t\to 0}
q(t)$, so $q_0\in Q$. Take $p_0\in P$ with $\st (p_0)=q_0$. Since
$d(q(t), q_0) < \st (\delta)/2$ for all sufficiently small $t>0$,
there is for each $s'$ with $\ma < s' < s_1$ an $s$ such that $\ma <
s < s'$ and $d(p(s), p_0) < \delta/2$, and thus $d(X(s), p_0) >
\delta/2$. Then by the o-minimality of $R$ we can decrease $s_1$ to
arrange that $d(X(s), p_0) > \delta/2$ for all $s$ with $\ma < s <
s_1$ and $d(Y(t), q_0) < \st (\delta)/2$ for all $t$ with $0<t<
t_1$. For such $t$, take $y\in Y(t)$ with $d(y, q_0) < \st
(\delta)/2$; then $(t,y)\in Y$, so $(t,y)= \st (s,x)$ with $(s,x)\in
X$, so $d(x,p_0) < \delta/2$ with $\ma < s < s_1$ and $x\in X(s)$, a
contradiction. This concludes the proof of the claim.

\medskip\noindent
Define $f \colon I \to R$ by $f(s)=d_H(X(s), P)$, so $f$ is
definable. Changing $s_0$ and $s_1$, if need be, we arrange that the
restriction of $f$ to $[s_0, s_1]$ is continuous and monotone. If
this restriction of $f$ is increasing, then it follows from the
Claim that $f(\epsilon)\in \ma$ for all $\epsilon\in \ma$ with
$\epsilon\ge s_0$, and thus $d_H(X(\epsilon), X(s_0))\in \ma$ for
all such $\epsilon$, that is, $\st X(\epsilon)\ =\ \st X(s_0)$ for
all such $\epsilon$, and we are done.

So we can assume for the rest of the proof that $f$ is decreasing on
$(s_0,s_1)$. Then $f(s)\in \ma$ for all $s$ with $\ma < s  < s_1$ by
the Claim. Then Lemma~\ref{tlemma} gives $\epsilon_0\in \ma,\
\epsilon_0>s_0$, such that $f(s)\in \ma$ for all $s$ with
$\epsilon_0\le s <s_1$. As before, this yields $\st X(\epsilon)\ =\
\st X(\epsilon_0)$ for all $\epsilon\ge \epsilon_0$ in $\ma$.
\end{proof}

\noindent
Next a reduction to $1$-parameter families of $1$-dimensional sets:

\begin{lemma}~\label{red11} Let $n\ge 1$, and suppose that for all
definable $X\subseteq I^{1+n}$ with $\dim X(r) \le 1$ for all $r\in I$
there is $\epsilon_0 \in \ma^{>0}$ such that $\st X(\epsilon_0 )
=\st X(\epsilon )$ for all $\epsilon\in \ma^{>\epsilon_0 }$.
Then $(R,V)\models \Sigma(n)$.
\end{lemma}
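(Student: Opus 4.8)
The plan is to prove, for our fixed $n$, the statement $P(m)$ by induction on $m$: \emph{for all definable $X\subseteq I^{1+n}$ with $\dim X(r)\le m$ for all $r\in I$ there is $\epsilon_0\in\ma^{>0}$ with $\st X(\epsilon_0)=\st X(\epsilon)$ for all $\epsilon\in\ma^{>\epsilon_0}$.} Here $P(1)$ is exactly the hypothesis of the lemma and $P(n)$ is $\Sigma(n)$, which finishes the proof. By padding with zero coordinates the hypothesis also gives $P(1)$ for families in $I^{1+n'}$ with $n'\le n$; in particular $\st\phi(r)$ is eventually constant, as $r\searrow 0$ through $\ma$, for every definable $\phi\colon I\to I^{n'}$ with $n'\le n$ (apply $P(1)$ to $\{(r,\phi(r))\}$). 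We use this freely as an ``auxiliary $P(1)$''.

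For the inductive step fix $m\ge 2$, assume $P(m-1)$, and let $X\subseteq I^{1+n}$ be definable with $\dim X(r)\le m$ for all $r$. Replacing each $X(r)$ by $\closure{X(r)}$ (same standard part, by the elementary fact $\st(\closure A)=\st A$, and same dimension bound) and passing to a suitable interval on which $X$ admits a uniform cell decomposition relative to the parameter $r$, we get finitely many definable cells $C_j$ such that for $r\in\ma^{>0}$ the nonempty $C_j(r)$ decompose $X(r)$ with constant combinatorial type; then $\st X(r)=\bigcup_j\st C_j(r)$. The cells with $\dim C_j(r)\le m-1$ are handled at once by $P(m-1)$, so it remains to treat a single cell $C=C_j$ with $\dim C(r)=m$ for all $r\in\ma^{>0}$. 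After a coordinate permutation constant in $r$, $C(r)$ is the graph of a continuous definable $h(r,\cdot)\colon D(r)\to I^{n-m}$ over an open cell $D(r)\subseteq I^m$, the frontier $F(r)$ of $C(r)$ satisfies $\dim F(r)\le m-1$, and $\st C(r)=\st\closure{C(r)}=\st C(r)\cup\st F(r)$, with $\st F(r)$ eventually constant by $P(m-1)$.

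The core of the argument, and \textbf{the main obstacle}, is to show that $\st C(r)$ is pinned down, eventually in $r$, by $\st F(r)$ together with a bounded amount of ``side'' data, each bit of which is $\st$ of a definable function of $r$ and hence itself eventually constant by the auxiliary $P(1)$ — in other words, to rule out that $\st C(r)$ drifts continuously as $r$ ranges over $\ma^{>0}$, and to control the phenomenon that $\st C(r)$ drops dimension into $\st F(r)$ when $C(r)$ is ``infinitesimally thin''. When $m=n$ this is clean: $C(r)$ is the connected component of $I^n\setminus F(r)$ meeting a definably chosen point $p(r)\in C(r)$ maximising $d(\,\cdot\,,F(r))$; if the ``inradius'' $\iota(r):=d(p(r),F(r))$ lies in $\ma$ then every point of $C(r)$ is within $\ma$ of $F(r)$, so $\st C(r)=\st F(r)$, and otherwise $\st p(r)\notin\st F(r)$ and, once $\st F(r)$ has stabilised to some $\bar F$, $\st C(r)$ equals the closure of the component of $\bk^n\setminus\bar F$ containing $\st p(r)$ — using that this ``which component'' datum is locally constant in the basepoint (path‑lifting), while $\st\iota(r)$ and $\st p(r)$ are eventually constant. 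The case $m<n$ is handled by running the same chamber analysis inside the $m$‑dimensional cell structure (applying the $n{=}m$ instance of the induction to the base $D(r)\subseteq I^m$ and $P(m-1)$ to $\partial D(r)$) and additionally controlling the standard part of $h(r,\cdot)$: its oscillation on infinitesimal boxes is again governed by auxiliary definable functions and by $P(m-1)$, and Lemma~\ref{tlemma}-style mean‑value arguments (together with the elementary facts that $\st A$ is closed and $d(\bar a,\st Y)=\st\, d(a,Y)$ for definable closed bounded $Y$ and lifts $a$ of $\bar a$) are what let one pass from ``infinitesimal at every non‑infinitesimal point'' to ``infinitesimal down to some $\epsilon_0\in\ma$''. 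Careful bookkeeping is needed throughout to make sure every auxiliary family to which an inductive instance is applied actually sits inside the appropriate $I^{1+n'}$ with $n'\le n$.
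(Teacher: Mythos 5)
Your proposal takes a genuinely different route from the paper, and the core inductive step is left as a sketch at exactly the point where the real difficulty lies. The paper's proof is a one-paragraph, non-inductive argument by definable choice. After replacing each $X(r)$ by its closure (which changes neither $\st X(r)$ nor the dimension bound), pick for each $(r,s)\in I^2$ a point $f(r,s)\in X(r)$ realizing $\sup\{d(x,X(s)):x\in X(r)\}$, and set $Y(r):=\{f(r,s):s\in I\}\subseteq X(r)$. Then $\dim Y(r)\le 1$ for all $r$, and because $f(r,s)\in Y(r)$, $Y(r)\subseteq X(r)$, $Y(s)\subseteq X(s)$, and symmetrically $f(s,r)\in Y(s)$, one gets $d_H(Y(r),Y(s))\ge d_H(X(r),X(s))$ for all $r,s$. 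Applying the hypothesis to $Y$ gives $\epsilon_0\in\ma^{>0}$ with $\st Y(\epsilon_0)=\st Y(\epsilon)$ for $\epsilon\in\ma^{>\epsilon_0}$; hence $d_H(Y(\epsilon_0),Y(\epsilon))\in\ma$, hence $d_H(X(\epsilon_0),X(\epsilon))\in\ma$, hence $\st X(\epsilon_0)=\st X(\epsilon)$. No induction on dimension, no cell decomposition, no chamber analysis.

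Your induction on the fibre dimension $m$ is a reasonable instinct, but its core step is not proved, and the key assertion is false as stated. Once $\st F(r)$ stabilises to $\bar F$ and $\st p(r)$ to $\bar p$, it is not true that $\st C(r)$ equals the closure of the component of $\bk^n\setminus\bar F$ containing $\bar p$: if a boundary function of the band $C(r)$ has an infinitesimally narrow spike near an endpoint, $\st C(r)$ acquires a vertical segment (lying inside $\bar F$) not contained in that component closure. The corrected identity $\st C(r)=\closure{\mathrm{comp}(\bar p)}\cup\bar F$ is plausible, but proving both inclusions is exactly the ``no drifting'' control the lemma is after, and the path-lifting appeal does not supply it, since a definable path in $C(r)$ need not have a well-defined standard part. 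Moreover, the $m<n$ case and the step ``controlling the standard part of $h(r,\cdot)$'' are acknowledged but not carried out, and the latter is essentially a multi-parameter version of the statement you are trying to prove. The definable-selection trick above is designed precisely to bypass all of this.
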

\begin{proof}
Let $X\subseteq I^{1+n}$ be definable
such that $X(r)$ is nonempty for all $r\in I$. Definable choice
gives a definable map $f \colon I^2 \to I^n$ such that
for all $r,s\in I$ we have $f(r,s)\in X(r)$ and
$$d(f(r,s), X(s)) = \sup\{d(x, X(s)):\ x\in X(r)\}.$$
Define $Y\subseteq I^{1+n}$ by $Y(r)=\{f(r,s): \ s\in I \}$
for $r\in I$. Then $Y(r)\subseteq X(r)$, $\dim Y(r) \le 1$, and
$d_H(Y(r), Y(s)) \ge d_H(X(r), X(s))$, for all $r,s\in I$. The hypothesis
of the lemma gives $\epsilon_0 \in \ma^{>0}$ such that $\st Y(\epsilon_0 )
=\st Y(\epsilon )$ for all $\epsilon\in \ma^{>\epsilon_0 }$, hence
$d_H\big(Y(\epsilon_0 ),Y(\epsilon )\big)\in \ma$ for all
$\epsilon\in \ma^{>\epsilon_0 }$, so $d_H\big(X(\epsilon_0 ),X(\epsilon )\big)\in \ma$ for all
$\epsilon\in \ma^{>\epsilon_0 }$, and thus
$\st X(\epsilon_0 )
=\st X(\epsilon )$ for all $\epsilon\in \ma^{>\epsilon_0 }$.
\end{proof}


\begin{corollary} $\mathcal{C}'(2)\ \Longrightarrow\ \Sigma$.
\end{corollary}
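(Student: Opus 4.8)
The plan is to obtain $\Sigma$ from $\C'(2)$ by combining Lemma~\ref{red11} with the argument of Lemma~\ref{closed then sigma}. The point is that the proof of Lemma~\ref{closed then sigma} uses the hypothesis $\C(n)$ at exactly one spot --- to realize a certain Hausdorff limit $Q$ as the standard part of a definable subset of $R^n$ --- and that, after the reduction of Lemma~\ref{red11}, this $Q$ will be at most one-dimensional, so that Lemma~\ref{c2b}, which needs only $\C'(2)$, can be used in place of $\C(n)$. All the tools of this section are available since $\bk_{\ind}$ is o-minimal (which $\C'(2)$ forces and which is in force throughout). Fix $n\ge 1$; by Lemma~\ref{red11} it suffices to show that for every definable $X\subseteq I^{1+n}$ with $\dim X(r)\le 1$ for all $r\in I$ there is $\epsilon_0\in\ma^{>0}$ with $\st X(\epsilon_0)=\st X(\epsilon)$ for all $\epsilon\in\ma^{>\epsilon_0}$. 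Fix such an $X$. As at the start of the proof of Lemma~\ref{closed then sigma} we may assume $X(r)\ne\emptyset$ for all $r\in I$, and, replacing each fiber $X(r)$ by its closure in $R^n$ --- which changes neither $\st X(r)$ nor the dimension of $X(r)$, and keeps $X(r)\subseteq I^n$ --- we may assume every $X(r)$ is closed and bounded.

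Now run the proof of Lemma~\ref{closed then sigma} with this $X$. Put $Y:=\st X\subseteq I(\bk)^{1+n}$. Since $X$ is definable and $V$-bounded, $\st X$ is closed, so each section $Y(t)$ is closed; moreover $Y(t)$ is bounded and, for $t$ in the standard part of $I$, nonempty (lift $t$ to some $r\in I$ and use $X(r)\ne\emptyset$). Hence Lemma~\ref{hl}, applied inside $\bk_{\ind}$, yields the Hausdorff limit $Q\subseteq I(\bk)^n$ of $Y(t)$ as $t\to 0+$; $Q$ is definable in $\bk_{\ind}$, closed, bounded and nonempty. Here is the one new ingredient: from $\dim X(r)\le 1$ for all $r$ we get $\dim X\le 2$, hence $\dim Y\le 2$ because the standard part map does not raise dimension; therefore $\dim Y(t)\le 1$ for all $t$ outside a finite subset of $I(\bk)$, in particular for all $t$ in some interval $(0,t_1)$, and the ``moreover'' clause of Lemma~\ref{hl}, applied to the family restricted to $t\in(0,t_1)$ (which has the same Hausdorff limit), gives $\dim Q\le 1$. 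So Lemma~\ref{c2b} applies --- this is the only place $\C'(2)$ is invoked, in place of $\C(n)$ --- and, after truncating and taking closure exactly as in the proof of Lemma~\ref{closed then sigma}, yields a closed definable $P\subseteq I^n$ with $\st P=Q$.

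From this point the proof is word for word that of Lemma~\ref{closed then sigma}. The Claim there (for each $\delta\in R^{>\ma}$ there is $s'>\ma$ with $d_H(X(s),P)<\delta$ for all $s$ with $\ma<s<s'$) uses only $\st P=Q$, the triangle inequality for $d_H$, definable choice, and the o-minimality of $R$ and of $\bk_{\ind}$, so it carries over unchanged; consequently the definable function $f(s):=d_H(X(s),P)$ satisfies $f(s)\in\ma$ for $\ma<s<s_1$ for suitable $s_1>\ma$. Arranging $f$ to be monotone on some $[s_0,s_1]$, and handling the increasing case directly and the decreasing case via Lemma~\ref{tlemma}, we obtain $\epsilon_0\in\ma^{>0}$ with $f(\epsilon)\in\ma$, i.e. $\st X(\epsilon)=\st P=\st X(\epsilon_0)$, for all $\epsilon\in\ma^{>\epsilon_0}$. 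By Lemma~\ref{red11} this gives $\Sigma(n)$, and as $n$ was arbitrary, $\Sigma$ follows.

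The main obstacle is precisely the dimension bound $\dim Q\le 1$: transcribing the proof of Lemma~\ref{closed then sigma} is routine, but $\C'(2)$ is too weak to realize an arbitrary closed definable subset of $\bk^n$ as a standard part, so one genuinely needs Lemma~\ref{red11} to cut down to one-dimensional fibers, together with the non-increase of dimension under $\st$ and the dimension clause of Lemma~\ref{hl}, to force $\dim Q\le 1$ and make Lemma~\ref{c2b} usable. The remaining technicalities --- closedness and nonemptiness of the fibers $Y(t)$ so that Lemma~\ref{hl} applies, and extracting a closed $P\subseteq I^n$ from the set produced by Lemma~\ref{c2b} --- are handled exactly as in that earlier proof.
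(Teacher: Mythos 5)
Your proof is correct and follows essentially the same route as the paper: reduce via Lemma~\ref{red11} to $1$-dimensional fibers, rerun the proof of Lemma~\ref{closed then sigma}, use $\dim X\le 2\Rightarrow\dim(\st X)\le 2\Rightarrow\dim Q\le 1$ together with the ``moreover'' clause of Lemma~\ref{hl}, and then invoke Lemma~\ref{c2b} in place of $\C(n)$ to realize $Q$ as a standard part. The extra bookkeeping you include (closing up the fibers of $X$, noting nonemptiness/closedness of the fibers of $\st X$, restricting the parameter range before applying Lemma~\ref{hl}) is harmless and only makes explicit what the paper leaves implicit.
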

\begin{proof} Assume $(R,V)$ satisfies $\mathcal{C}'(2)$. Towards proving
$(R,V) \models \Sigma$, consider
a definable $X\subseteq I^{1+n}$, $n\ge 1$, with $X(r)\ne \emptyset$ and
$\dim X(r) \le 1$ for all $r\in I$; by Lemma~\ref{red11} it suffices to show
that then there is $\epsilon_0 \in \ma^{>0}$ such that $\st X(\epsilon_0 )
=\st X(\epsilon )$ for all $\epsilon\in \ma^{>\epsilon_0 }$. Towards this goal
we use the proof of Lemma~\ref{closed then sigma}. The present $X$ satisfies
$\dim X \le 2$, so the set
$Y=\st(X)$ in that proof has also dimension at most 2 in the sense of the
o-minimal structure  $\bk_{\ind}$, by Corollary 2.8 in \cite{st1}.
So for all but finitely many $t\in I(\bk)$ the section
$Y(t)$ has dimension
at most 1. As in that proof, let $Q\subseteq I(\bk)^n$ be the hausdorff limit
of $Y(t)$ as $t>0$ tends to $0$ in $\bk$. Then $\dim Q \le 1$, by Lemma~\ref{hl}. In the proof of
 Lemma~\ref{closed then sigma} we only used the assumption $\C(n)$ to provide a
$P\in \Def^n(R)$ with $\st(P)=Q$. Since $\dim Q \le 1$ and
$\C'(2)$ holds, we can appeal here instead to Lemma~\ref{c2b} to provide
such a $P$.
With this $P$ the rest of the proof of  Lemma~\ref{closed then sigma} goes
through to give an $\epsilon_0$ as desired.
\end{proof}

\noindent
In particular, we have $\C(2)\ \Rightarrow\ \Sigma$, and together with
the results from \cite{st1} this gives the equivalence of conditions
(1)--(3) of Theorem~\ref{equi}. Of course, these conditions are also
equivalent to $\C'(2)$.

\end{section}

\begin{section}{Construction of a complex}\label{cc}

\noindent
As we mentioned in the introduction, we shall adapt to our purpose
the proof of the o-minimal triangulation theorem in Chapter 8 of \cite{book}.
The first non-trivial issue that comes up in doing this is of a purely
semilinear nature,
and consists of finding a version of Lemma (1.10) in that chapter that
is compatible with the standard part map. That lemma constructs a complex in
$R^{n+1}$ based on a simplex in $R^n$, and to make this construction
compatible with the standard part map we need to linearly
order the vertices of the simplex in a special way.

\medskip\noindent
In more detail, recall that if $S$ is a simplex
in $R^n$, then $K(S)$ is the complex in $R^n$ whose elements are the faces of $S$.
Define a {\em $V$-simplex in $R^n$\/} to be a $V$-bounded
simplex $S\subseteq R^n$ such that $\st(K(S))$ is a complex in $\bk^n$. Note that
if $S$ is a $V$-simplex in $R^n$, then $\st(S)$ is a simplex in $\bk^n$ and
$K(\st(S))=\st(K(S))$.  We also define a {\em $V$-complex in
$R^n$\/} to be a $V$-bounded complex $K$ in $R^n$ such that $\st K$ is a
complex in $\boldsymbol{k}^n$; note that then the simplexes of $K$ are $V$-simplexes.

Given a $V$-simplex $S$ in $R^n$ our construction will require
an ordering $a_0 < a_1 < \dots < a_m$
of its vertices and indices $0=i_0 <i_1 < \dots < i_k\le m$ such that
$\st(a_{i_0}),\dots, \st(a_{i_k})$ are the distinct vertices of $\st S$, and
$\st(a_{i_\kappa})=\st(a_i)$ whenever $i_{\kappa} \le i < i_{\kappa+1}$.
This suggests the following notion (to be applied to the standard parts of
points in $V^n$).

\medskip\noindent
Let $a_0,\dots, a_m\in \bk^n$.
We call the sequence $a_0,\dots, a_m$ {\em simplicial\/} if there are indices
$i_0 < \dots < i_k$ in $\{0,\dots, m\}$ with $i_0=0$ such that
$a_{i_0},\dots, a_{i_k}$ are affinely independent in $\bk^n$, and
$a_{i_\kappa}=a_i$ whenever
$$0\le \kappa \le k, \quad i_{\kappa} \le i < i_{\kappa+1}, \quad (\text{with }i_{k+1}:= m+1).$$

\medskip\noindent
Suppose the sequence $a_0,\dots, a_m$ is {\em simplicial\/} and let
$i_0,\dots, i_k$ be as above. Then $[a_0,\dots, a_m]=[a_{i_0},\dots, a_{i_k}]$
is a $k$-simplex; if
$0\le j_0 < \dots < j_l\le m$, then the sequence $a_{j_0},\dots, a_{j_l}$ is also
simplicial, and $[a_{j_0},\dots, a_{j_l}]$ is a face of $[a_0,\dots, a_m]$;
all faces of $[a_0,\dots, a_m]$ are obtained in this way, but different
sequences $j_0,\dots, j_l$ can give the same face.

Let $r_i, s_i\in \bk$ for $i=0,\dots,m$ be such that $r_i \le s_i$ for all
$i$ and
\begin{align*} &r_{i_\kappa}=r_i \text{ and }s_{i_\kappa}=s_i
\text{ whenever}\\
&0\le \kappa \le k, \quad i_{\kappa} \le i < i_{\kappa+1}, \quad (\text{with }i_{k+1}:= m+1).
\end{align*}
Put $b_i:= (a_i,r_i),\ c_i=(a_i,s_i)$ (points in $\bk^{n+1}$). Then we have the
following variant of Lemma (1.10) in Chapter 8 of \cite{book}.

\begin{lemma}\label{vs} If
$0\le j_0 < \dots <j_p \le j_{p+1} < \dots < j_q\le m$, $p<q$,
then the sequence $b_{j_0},\dots, b_{j_p}, c_{j_{p+1}},\dots, c_{j_q}$ is
simplicial. Let $L$ be the set of all simplexes
$[b_{j_0},\dots, b_{j_p}, c_{j_{p+1}},\dots, c_{j_q}]$ obtained from
such sequences $j_0,\dots, j_q$, and all faces of these simplexes. Then $L$ is a complex with
\begin{align*} |L|&=\{t(t_0b_0+\dots + t_mb_m)+(1-t)(t_0c_0+ \dots + t_mc_m):\\
  & \qquad \qquad \qquad \qquad \qquad 0\le t \le 1,\ \text{ all }t_i\ge 0,\
                             t_0 + \dots + t_m=1\}\\
&=\ \text{ convex hull of }\{b_0,\dots, b_m, c_0,\dots, c_m\}.
\end{align*}
\end{lemma}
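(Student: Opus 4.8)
The plan is to deduce Lemma~\ref{vs} from its special case in which $a_0,\dots,a_m$ are affinely independent --- a case that, up to passing between open and closed simplexes, is Lemma~(1.10) of Chapter~8 of \cite{book} --- by transporting that statement along the block structure of the simplicial sequence $a_0,\dots,a_m$. So the first thing I would do is set up bookkeeping: let $\pi\colon\{0,\dots,m\}\to\{0,\dots,k\}$ be the block map, $\pi(j)=\kappa$ iff $i_\kappa\le j<i_{\kappa+1}$; it is weakly increasing, surjective, and $\pi(i_\kappa)=\kappa$. Simpliciality of $a_0,\dots,a_m$ together with the hypothesis that $r_i,s_i$ are constant on blocks give $a_j=a_{i_{\pi(j)}}$, $r_j=r_{i_{\pi(j)}}$, $s_j=s_{i_{\pi(j)}}$, hence $b_j=b_{i_{\pi(j)}}$ and $c_j=c_{i_{\pi(j)}}$ for all $j$. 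In particular $\{b_0,\dots,b_m\}=\{b_{i_0},\dots,b_{i_k}\}$ and $\{c_0,\dots,c_m\}=\{c_{i_0},\dots,c_{i_k}\}$ as subsets of $\bk^{n+1}$, so the convex hull of $\{b_0,\dots,b_m,c_0,\dots,c_m\}$ equals that of $\{b_{i_0},\dots,b_{i_k},c_{i_0},\dots,c_{i_k}\}$.

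Next I would apply the affinely independent case to the affinely independent tuple $a_{i_0},\dots,a_{i_k}$ and the values $r_{i_0}\le s_{i_0},\dots,r_{i_k}\le s_{i_k}$: this yields a complex $L'$ whose members are the simplexes $[b_{i_{\kappa_0}},\dots,b_{i_{\kappa_{p'}}},c_{i_{\kappa_{p'+1}}},\dots,c_{i_{\kappa_{q'}}}]$ for $\kappa_0<\dots<\kappa_{p'}\le\kappa_{p'+1}<\dots<\kappa_{q'}$ with $p'<q'$, together with all their faces, each with affinely independent vertex set, and $|L'|$ is the convex hull just described. The core step is the identity $L=L'$ as collections of subsets of $\bk^{n+1}$. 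Given a generating index sequence $j_0<\dots<j_p\le j_{p+1}<\dots<j_q$ ($p<q$) for $L$, I would apply $\pi$ and delete consecutive repetitions separately inside the $b$-segment and inside the $c$-segment; since $\pi$ is weakly increasing this yields a generating sequence $\kappa_0<\dots<\kappa_{p'}\le\kappa_{p'+1}<\dots<\kappa_{q'}$ for $L'$ (note $\kappa_{p'}=\pi(j_p)\le\pi(j_{p+1})=\kappa_{p'+1}$, and $p'<q'$ since at least one $c$-index survives), and the identities $b_{j_\alpha}=b_{i_{\pi(j_\alpha)}}$, $c_{j_\beta}=c_{i_{\pi(j_\beta)}}$ show that $[b_{j_0},\dots,b_{j_p},c_{j_{p+1}},\dots,c_{j_q}]$ is exactly the corresponding member of $L'$; faces go to faces. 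Conversely a generating $\kappa$-sequence for $L'$ lifts to the $j$-sequence $j_\alpha:=i_{\kappa_\alpha}$, which generates the same simplex. Hence $L=L'$. This also settles the simpliciality assertion: the maximal constant runs of a sequence $b_{j_0},\dots,b_{j_p},c_{j_{p+1}},\dots,c_{j_q}$ have pairwise distinct values, and those values are the vertices of a member of $L'$, hence affinely independent, so listing the run-starts (the first being $0$) provides the indices required by the definition of ``simplicial''.

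It follows that $L$ is a complex and $|L|$ is the convex hull of $\{b_0,\dots,b_m,c_0,\dots,c_m\}$. To complete the proof I would check directly that this convex hull equals the parametrized set in the displayed formula. The inclusion ``$\supseteq$'' is clear, since $t\bigl(\sum_i t_ib_i\bigr)+(1-t)\bigl(\sum_i t_ic_i\bigr)=\sum_i\bigl((tt_i)b_i+((1-t)t_i)c_i\bigr)$ is a convex combination of the $b_i$ and $c_i$. For ``$\subseteq$'', write a point of the hull as $z=\sum_i(u_ib_i+v_ic_i)$ with $u_i,v_i\ge 0$ and $\sum_i(u_i+v_i)=1$, and set $t_i:=u_i+v_i$; since $b_i=(a_i,r_i)$ and $c_i=(a_i,s_i)$, the first $n$ coordinates of $z$ are $\sum_i t_ia_i$ and its last coordinate $h=\sum_i(u_ir_i+v_is_i)$ lies in the interval $\bigl[\sum_i t_ir_i,\ \sum_i t_is_i\bigr]$; since $t\mapsto t\sum_i t_ir_i+(1-t)\sum_i t_is_i$ is affine with endpoint values $\sum_i t_is_i$ and $\sum_i t_ir_i$, some $t\in[0,1]$ realizes $h$, and for that $t$ one has $z=t\bigl(\sum_i t_ib_i\bigr)+(1-t)\bigl(\sum_i t_ic_i\bigr)$.

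I expect the identity $L=L'$ to be the only genuine obstacle. Making it precise means tracking, after applying $\pi$, the three ways in which listed vertices can coincide --- consecutive $b$'s merging, consecutive $c$'s merging, and a terminal $b_{j_p}$ meeting an initial $c_{j_{p+1}}$ when $\pi(j_p)=\pi(j_{p+1})$ and $r_{i_{\pi(j_p)}}=s_{i_{\pi(j_p)}}$ --- and verifying in each case that one still lands on (a face of) a member of $L'$, and conversely that every member of $L'$ is reached. Everything else is either the elementary convexity computation above or a direct appeal to \cite{book}.
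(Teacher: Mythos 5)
Your proof is correct and follows essentially the same route as the paper: reduce along the block structure to the affinely independent subsequence $a_{i_0},\dots,a_{i_k}$ and invoke Lemma~(1.10) of Chapter~8 of \cite{book}. You spell out via the block map $\pi$ what the paper dismisses as ``routine to check'' (including the junction-merge cases), and you verify the parametrization/convex-hull identity directly rather than reading it off from~(1.10); these additions only make the argument more explicit, not different.
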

\begin{proof} It is routine to check that the first statement is true. As to
the rest, consider first the case that $r_i=s_i$ for all $i$. Then $b_i=c_i$
for all $i$, so $L$ is just the set of faces of the $k$-simplex
$[b_0,\dots, b_m]$, and the claim about $|L|$ then holds trivially.
Suppose $r_i < s_i$ for some $i$. Then, if $0\le p \le k$ and
$r_{i_p} < s_{i_p}$ we have a $(k+1)$-simplex
$[b_{i_0},\dots, b_{i_p}, c_{i_p}, \dots, c_{i_k}]\in L$. It is routine to
check that $L$ is the
set of the $(k+1)$-simplexes obtained in this way and all their faces.
Then our claim follows from Lemma (1.10) in Chapter 8 of \cite{book}.
\end{proof}

\begin{lemma}\label{vg} Let $S$ be a $V$-bounded $m$-simplex in $R^n$. Then
the following are equivalent: \begin{enumerate}
\item[$(1)$] $S$ is a $V$-simplex;
\item[$(2)$] there is an enumeration $a_0,\dots, a_m$ of the vertices of $S$
such that the sequence $\st(a_0),\dots, \st(a_m)$ is simplicial.
\end{enumerate}
\end{lemma}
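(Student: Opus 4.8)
The plan is to show that in both directions the real content is the identity $\st(K(S)) = K(\st(S))$, i.e.\ that $\st(K(S))$ is exactly the collection of faces of the single simplex $\st(S)$. Write $\bar a_i := \st(a_i)$; since $S$ is $V$-bounded, every vertex of $S$ lies in $V^n$, so $\st$ is defined on all faces of $S$ and $\st[a_{j_0},\dots,a_{j_l}] = [\bar a_{j_0},\dots,\bar a_{j_l}]$ for each face $[a_{j_0},\dots,a_{j_l}]$ of $S$. For $(2)\Rightarrow(1)$, suppose the vertices are enumerated so that $\bar a_0,\dots,\bar a_m$ is simplicial, with associated indices $0 = i_0 < i_1 < \dots < i_k$. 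By the remarks following the definition of ``simplicial'', $[\bar a_0,\dots,\bar a_m] = [\bar a_{i_0},\dots,\bar a_{i_k}] = \st(S)$ is a $k$-simplex, every $[\bar a_{j_0},\dots,\bar a_{j_l}]$ with $j_0 < \dots < j_l$ is a face of it, and every face of it arises this way. Hence $\st(K(S)) = K(\st(S))$, which is a complex in $\bk^n$; together with $V$-boundedness this is precisely the statement that $S$ is a $V$-simplex.

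For $(1)\Rightarrow(2)$, the substantive direction, assume $\st(K(S))$ is a complex. Since $S\in K(S)$, the set $\st(S) = [\bar a_0,\dots,\bar a_m]$ is an element of this complex, hence a simplex in $\bk^n$; let $v_0,\dots,v_k$ be its vertices, which lie among $\bar a_0,\dots,\bar a_m$ (extreme points of the convex hull of a finite set belong to that set). The key step is to prove that \emph{every} $\bar a_i$ equals some $v_j$. Suppose not, say $\bar a_{i^*}\ne v_j$ for all $j$. Write $\bar a_{i^*}$ as a convex combination $\sum_{j\in J} t_j v_j$ with $J$ its support, so all $t_j > 0$; since $|J|=1$ would give $\bar a_{i^*}=v_j$, we have $|J|\ge 2$. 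Choose for each $j\in J$ a vertex $a_{l_j}$ of $S$ with $\bar a_{l_j} = v_j$. Then $\{a_{i^*}\}$ and $[a_{l_j}:j\in J]$ are faces of $S$, so $\{\bar a_{i^*}\}$ and $[v_j:j\in J]$ both lie in $\st(K(S))$; they intersect, in $\{\bar a_{i^*}\}$, yet $\{\bar a_{i^*}\}$ is not a face of the simplex $[v_j:j\in J]$ --- the faces of that simplex are the $[v_j:j\in J']$ for $\emptyset\ne J'\subseteq J$, and such a face is a single point only if $|J'|=1$, which would force $\bar a_{i^*}=v_j$ for some $j$. This contradicts the common-face clause in the definition of a complex, proving the claim.

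Granting the claim, the distinct values among $\bar a_0,\dots,\bar a_m$ are exactly the affinely independent points $v_0,\dots,v_k$. Re-enumerate the vertices of $S$ so that first come all $a_i$ with $\bar a_i = v_0$, then all with $\bar a_i = v_1$, and so on, and let $i_\kappa$ be the first index with $\bar a_{i_\kappa} = v_\kappa$; since the equivalence classes of $i\mapsto\bar a_i$ are now intervals, this exhibits $\bar a_0,\dots,\bar a_m$ as simplicial. The one place where care is genuinely needed is the key claim in $(1)\Rightarrow(2)$: converting the global hypothesis ``$\st(K(S))$ is a complex'' into the pointwise statement that each $\bar a_i$ is a vertex of $\st(S)$. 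Testing the $0$-face $\{a_{i^*}\}$ against the face spanned by those $a_{l_j}$ with $v_j$ in the support of $\bar a_{i^*}$ is exactly what produces the violation of the common-face condition, so I expect that to be the crux; the rest is bookkeeping about faces of simplices.
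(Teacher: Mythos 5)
Your proposal is correct and follows the same route as the paper's (very terse) proof: the $(2)\Rightarrow(1)$ direction reduces to verifying $\st(K(S))=K(\st(S))$ via $\st[a_{j_0},\dots,a_{j_l}]=[\st(a_{j_0}),\dots,\st(a_{j_l})]$, and the $(1)\Rightarrow(2)$ direction hinges on the claim that every $\st(a)$, for $a$ a vertex of $S$, is a vertex of $\st(S)$. The paper simply asserts that claim; your argument --- testing the singleton $\{\st(a_{i^*})\}$ against the face of $\st(S)$ spanned by the support of $\st(a_{i^*})$ and invoking the common-face condition --- is the natural way to supply the missing detail, and it is correct.
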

\begin{proof} Assume (1). Then every vertex $a$ of $S$ yields a
vertex $\st(a)$ of the simplex $\st(S)$, so with
$k:=\dim\big( \st (S)\big)$
we have an enumeration of the vertices of $S$ and indices
$i_0 <\dots < i_k$ as in (2).

It is routine to check that (2) implies (1).
\end{proof}

\noindent
Let $S$ be a $V$-simplex. Then Lemma~\ref{vg}  yields
an enumeration $a_0,\dots, a_m$ of its vertices and indices
$0=i_0< \dots < i_k$ in $\{0,\dots, m\}$ such that
$\st(a_{i_0}),\dots, \st(a_{i_k})$ are affinely independent in $\bk^n$, and
$\st(a_{i_\kappa})=\st(a_i)$ whenever
$$0\le \kappa \le k, \quad i_{\kappa} \le i < i_{\kappa+1}, \quad (\text{with }i_{k+1}:= m+1).$$
Let $r_i, s_i\in V$ for $i=0,\dots,m$ be such that $r_i \le s_i$ for all $i$,
\begin{align*} &\st(r_i)=\st(r_{i_\kappa}) \text{ and }\st(s_i)=\st(s_{i_\kappa})
\text{ whenever}\\
&i_{\kappa} \le i < i_{\kappa+1}, \quad 0\le \kappa \le k\quad (\text{with }i_{k+1}:= m+1),
\end{align*}
and $r_i < s_i$ for some $i$. Put $b_i:= (a_i,r_i),\ c_i=(a_i,s_i)$ (points in
$V^{n+1}$). Let $L$ be the set of all $(m+1)$-simplexes
$[b_0,\dots, b_i, c_i,\dots, c_m]$ with $b_i\ne c_i$,
and all faces of these simplexes.
Then by Lemma (1.10) of Chapter 8 in \cite{book}, $L$ is a complex with
$$|L|=\text{convex hull of }\{b_0,\dots, b_m, c_0,\dots, c_m\}.$$

\begin{corollary}\label{vgood} $L$ is a $V$-complex.
\end{corollary}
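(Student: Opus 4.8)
The plan is to verify directly that $L$ satisfies the two defining conditions of a $V$-complex: that $|L|$ is $V$-bounded, and that $\st L$ is a complex in $\bk^{n+1}$. The first is immediate since all the points $b_i, c_i$ lie in $V^{n+1}$, so their convex hull $|L|$ is $V$-bounded, and hence every simplex in $L$ is $V$-bounded. So the real content is showing $\st L$ is a complex in $\bk^{n+1}$.

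The natural strategy is to connect the $R$-level construction of $L$ with the $\bk$-level construction of Lemma~\ref{vs}. Apply the standard part map to the chosen data: set $\bar a_i := \st(a_i)$, $\bar r_i := \st(r_i)$, $\bar s_i := \st(s_i)$. By the hypotheses on the $r_i, s_i$ and the fact that $\st(a_0),\dots,\st(a_m)$ is simplicial with the given indices $i_0,\dots,i_k$, the data $\bar a_i, \bar r_i, \bar s_i$ satisfy exactly the hypotheses of Lemma~\ref{vs} (in $\bk^n$, using the field $\bk_{\ind}$ in place of $R$, which is legitimate since that lemma is purely about affine combinations and makes sense for any ordered field). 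Put $\bar b_i := (\bar a_i, \bar r_i) = \st(b_i)$ and $\bar c_i := (\bar a_i, \bar s_i) = \st(c_i)$, and let $\bar L$ be the complex in $\bk^{n+1}$ produced by Lemma~\ref{vs} from this data. I would then argue that $\st L = \bar L$. For this one checks that a generating simplex $\st[b_0,\dots,b_i,c_i,\dots,c_m]$ of $\st L$ equals $[\bar b_0,\dots,\bar b_i,\bar c_i,\dots,\bar c_m]$ (using that $\st$ commutes with convex hulls of points in $V^{n+1}$, as noted in the Introduction), and that this is one of the simplexes appearing in $\bar L$ (or a face of one, in the degenerate case where $\bar b_i = \bar c_i$); conversely every generating simplex of $\bar L$ arises this way, and $\st$ of a face of a simplex is the corresponding face of $\st$ of that simplex. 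Since $\bar L$ is a complex by Lemma~\ref{vs}, so is $\st L$, giving condition (2); together with $V$-boundedness this shows $L$ is a $V$-complex.

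The main obstacle is the bookkeeping around degeneracies: the generating simplexes of $L$ are indexed by $i\in\{0,\dots,m\}$ with $b_i\neq c_i$, whereas those of $\bar L$ (via Lemma~\ref{vs}) are more naturally indexed through the simplicial structure of the sequences $\bar b_{j_0},\dots,\bar b_{j_p},\bar c_{j_{p+1}},\dots,\bar c_{j_q}$, and one has $\bar b_i = \bar c_i$ for more values of $i$ than $b_i = c_i$ (namely whenever $\st(r_i) = \st(s_i)$ even if $r_i < s_i$). One must check that collapsing these extra coincidences under $\st$ sends the simplexes $[b_0,\dots,b_i,c_i,\dots,c_m]$ exactly onto the generating simplexes of $\bar L$ described in the proof of Lemma~\ref{vs} — those $(k+1)$-simplexes $[\bar b_{i_0},\dots,\bar b_{i_p},\bar c_{i_p},\dots,\bar c_{i_k}]$ with $\bar r_{i_p} < \bar s_{i_p}$ (or just the faces of $[\bar b_0,\dots,\bar b_m]$ if $\bar r_i = \bar s_i$ for all $i$) — and that faces are respected. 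This is a routine but slightly delicate matching argument; once it is done, the corollary follows immediately, since being a complex transfers from $\bar L = \st L$ back to the statement we want.
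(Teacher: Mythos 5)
Your proposal is correct and follows essentially the same route as the paper: apply Lemma~\ref{vs} to the standard parts $\st(b_i),\st(c_j)$ and identify $\st(L)$ with the resulting complex in $\bk^{n+1}$. The paper's proof is terser (it also notes along the way that each simplex of $L$ is a $V$-simplex) but invokes Lemma~\ref{vs} in exactly the same way; the indexing/degeneracy bookkeeping you flag is the same routine verification the paper leaves implicit.
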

\begin{proof} It follows easily from the assumptions on $r_i, s_i$
that each simplex $[b_0,\dots, b_i, c_i,\dots, c_m]$ with $b_i\ne c_i$
is a $V$-simplex. A face of a $V$-simplex is also a $V$-simplex, so each simplex of
$L$ is a $V$-simplex. That $\st(L)$ is a complex follows from
Lemma~\ref{vs} with the $\st(b_i), \st(c_j)$ in the role of $b_i, c_j$.
\end{proof}

\end{section}

\begin{section}{Extension Lemmas}

\noindent
The first extension lemma below is a $V$-version of lemma (2.1)
in Chapter 8 of \cite{book}, but requires a very different proof.
Before stating it we make some preliminary remarks and definitions.

\medskip\noindent
First, let $E$ be an affine subspace of $R^n$ of dimension $k\ge 1$,
so $E=e+L$ with $e\in R^n$ and $L$ a linear subspace of $R^n$ of dimension $k$.
Let $H_1$ and $H_2$ be affine hyperplanes in $E$, so
$H_i=e_i+L_i$ with $e_i\in E$ and a linear subspace $L_i$ of $L$ of dimension
$k-1$, for $i=1,2$. Let $u\in L\setminus (L_1\cup L_2)$. Then we have a direct
sum decomposition $L=Ru\oplus L_2$, which yields a map
$$(H_1, H_2): H_1 \to H_2, \quad \{(H_1,H_2)(x)\}= (x+Ru)\cap H_2 \text{ for all }x\in H_1.$$
This map is easily seen to be affine, and thus continuous, and to be a
bijection with inverse $(H_2, H_1)$.

Next, let $S$ be a simplex in $R^n$. A {\em proper face\/} of $S$ is a face
$F$ of $S$ such
that $F\ne S$. We set $\d(S):= \text{union of the proper faces of }S$;
this is the topological boundary of $S$ in the affine span of its vertices.

These definitions and remarks go through
for any ordered field instead of $R$, for example $\bk$.
In the rest of this section we assume that $\bk_{\ind}$ is o-minimal.

\begin{lemma}\label{ext1}
Let $S \subseteq R^n$ be a $V$-bounded simplex and let $f\colon
\d(S) \to R$ be a continuous $V$-bounded definable function inducing
a function $\st \d(S)\to \bk$. Then $f$ has a continuous $V$-bounded
definable extension $g\colon S \to R$ inducing a function $\st S \to
\bk$.
\end{lemma}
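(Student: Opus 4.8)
The plan is to reduce to the case where $S=[a_0,\dots,a_m]$ is a $V$-simplex, since for a general $V$-bounded simplex the hypothesis that $f$ induces a map $\st\,\d(S)\to\bk$ together with continuity already forces $\st(K(S))$ to be a complex (the induced map on standard parts must be defined, hence $\st$ cannot collapse $\d(S)$ pathologically); alternatively one notes that if $\st(S)$ is a single point then $\st\,\d(S)=\st(S)$ is that point and any bounded extension works, so the content is when $\dim\st(S)\ge 1$. So assume $S$ is a $V$-simplex, order its vertices $a_0<\dots<a_m$ and pick indices $0=i_0<\dots<i_k\le m$ as in Lemma~\ref{vg}, so that $\st(a_{i_0}),\dots,\st(a_{i_k})$ are the distinct vertices of $\st(S)$.

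\textbf{Key steps.} First I would pick a barycentric-type coordinate system on $S$ adapted to the block structure $\{i_0,\dots\},\dots$: writing a point of $S$ as $\sum t_j a_j$ with $t_j\ge 0$, $\sum t_j=1$, the standard part depends only on the block sums $T_\kappa:=\sum_{i_\kappa\le i<i_{\kappa+1}}t_j$, since $\st(a_i)=\st(a_{i_\kappa})$ on each block; thus $\st(S)=\{\sum_\kappa T_\kappa\st(a_{i_\kappa})\}$ and the "fibers" of $\st|_S$ are cut out by fixing the $\st(T_\kappa)$. Second, I would handle the extension geometrically: $S$ is star-shaped with respect to its barycenter $\beta$, and $\d(S)$ is the union of proper faces; every $x\in S\setminus\{\beta\}$ lies on a unique segment from $\beta$ to a point $\pi(x)\in\d(S)$, and $\pi$ is definable (piecewise the $(F,F')$-type affine maps from the preliminary remarks, applied in each affine span). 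Define $g(x):= (1-\lambda(x))f(\pi(x)) + \lambda(x)\,v$ where $\lambda(x)\in[0,1]$ is the ratio so that $x=(1-\lambda(x))\pi(x)+\lambda(x)\beta$ and $v\in V$ is any fixed value, e.g.\ an average of $f$ over the vertices of one maximal proper face; set $g(\beta)=v$. Continuity and definability are then routine; $V$-boundedness is clear since $f$ is $V$-bounded and $v\in V$.

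\textbf{The induced map on standard parts.} This is the step I expect to be the main obstacle, and where the block structure is essential. I need: if $x,x'\in S$ with $\st(x)=\st(x')$, then $\st(g(x))=\st(g(x'))$. The danger is that $\pi$ can send two $\st$-equivalent points of $S^{\o}$ to $\st$-inequivalent points of $\d(S)$ — e.g.\ if $\st(x)$ lies in the relative interior of $\st(S)$, its $\st$-fiber in $S$ is "thick" in the collapsed directions (those within a block), and $\pi$ need not respect that thickness. The fix is to build $\pi$ (equivalently, to choose the cone point and the retraction) so that it \emph{does} respect the block decomposition: concretely, retract first in the "collapsed" coordinates — i.e.\ contract each block of $t_i$'s toward a single representative, landing on a sub-simplex $[a_{i_0},\dots,a_{i_k}]$ on which $\st$ is injective — and only then do the ordinary barycentric retraction onto $\d$ of that sub-simplex, where the classical argument and Lemma~(2.1) of Chapter~8 of \cite{book} (or just continuity) give that $\st(g(x))$ depends only on $\st$ of the landing point, hence only on $\st(x)$. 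One must check that this composite retraction is continuous at the interface where $S$ meets its proper faces, so that $g$ genuinely extends $f$; this is where the explicit affine maps $(H_1,H_2)$ from the preamble, together with the compatibility constraints $\st(r_i)=\st(r_{i_\kappa})$ style conditions built into the construction of the complex in Section~\ref{cc}, do the bookkeeping. Once the retraction respects blocks, the lemma "$f$ induces a function on $\st\,\d(S)$" transfers along it to give "$g$ induces a function on $\st(S)$", completing the proof.
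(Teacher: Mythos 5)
Your approach diverges substantially from the paper's, and in the divergence there are two genuine gaps.

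First, the reduction to $V$-simplices is not justified and is in fact false as stated. It is not true that the hypothesis ``$f$ induces a map on $\st\,\d(S)$'' forces $\st(K(S))$ to be a complex: a constant $f$ always induces a map, but one can easily have a $V$-bounded simplex $[a_0,\dots,a_m]$ whose faces collapse under $\st$ in a non-complex way (for instance $\st(a_3)$ lying in the relative interior of the edge $[\st(a_1),\st(a_2)]$ while $\st(a_0),\st(a_1),\st(a_2)$ are affinely independent). Your alternative ($\dim\st(S)\ge 1$) also does not imply $V$-simplexhood. The paper does not make this reduction; it handles an arbitrary $V$-bounded simplex by reordering so that $\st(a_0),\dots,\st(a_d)$ are affinely independent, $d=\dim\st(S)$, and nothing more is needed.

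Second, and more seriously, the cone-point (barycentric) retraction does not extend $f$, and cannot be patched in the way you suggest. The root problem is that when $\st(a_0),\dots,\st(a_k)$ are affinely dependent one has $\st(S)=\st\,\d(S)$ (this is the paper's Claim~3, via Carath\'eodory). In particular $\st(\beta)$ already lies in $\st\,\d(S)$, so there is a boundary point $z$ with $\st(z)=\st(\beta)$; your recipe sets $g(\beta)=v$ while $g(z)=f(z)$, with no relation between $\st(v)$ and $\st(f(z))$, so $g$ cannot induce a map on $\st(S)$. Your proposed fix --- first collapse each block onto the sub-simplex $S'=[a_{i_0},\dots,a_{i_k}]$, then retract within $S'$ --- destroys the extension property: the block collapse moves most of $\d(S)$ off itself (e.g.\ an edge $[a_1,a_2]$ with $\st(a_1)=\st(a_2)$ is crushed to the point $a_1$), so the resulting $g$ is determined by $f$ on $\d(S')$ only and disagrees with $f$ on $\d(S)\setminus\d(S')$. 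You flag this as ``one must check'' continuity at the interface, but it is not merely a continuity verification; the candidate $g$ is simply not an extension of $f$.

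The paper's route avoids both issues. It does not use a cone point at all: it picks a vector $u$ in the affine span of $S$ transverse to every facet hyperplane, lets $\lambda(x)$ be the second intersection of $x+Ru$ with $\d(S)$, and sets $g$ to be linear on each such chord between the values $f(x)$ and $f(\lambda(x))$. Because the chord endpoints lie in $\d(S)$, the resulting $g$ automatically restricts to $f$ there. The entire ``respect the standard part map'' burden is then loaded onto the choice of $u$: since $\st(a_k)$ lies in $\st(H_k)$ (here $H_k=\<a_0,\dots,a_{k-1}\>$), one can choose $u$ with $a_k+u\in H_k$ and $\st(u)=0$; this forces every chord $[x,\lambda(x)]$ to have infinitesimal length (the paper's Claim~4), and then continuity of $g$ plus infinitesimal chord length yields that $g$ induces a function. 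This is a genuinely different mechanism from a retraction, and it sidesteps the incompatibility between ``extend $f$ on $\d(S)$'' and ``respect $\st$'' that blocks your construction.
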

\begin{proof} Let $a_0,\dots, a_k$ be the distinct vertices of $S$.
The lemma holds trivially for $k=0$
since $\d(S)=\emptyset$ in that case. So let $k\ge 1$, and let
$E=\<a_0,\dots, a_k\>$ be the affine span of the vertices of $S$.
Below $i$ ranges over
$\{0,\dots,k\}$, and we set
$$H_i:= \<a_0,\dots, a_{i-1}, a_{i+1},\dots, a_k\>, \quad
F_i:= [a_0,\dots, a_{i-1}, a_{i+1},\dots, a_k].$$
Let $L$ be the linear subspace of $R^n$ of which $E$ is a translate, and
let $L_i$ be the proper linear subspace of $L$ of which
$H_i$ is a translate. Take a vector $u\in L\setminus \bigcup_i L_i$.
(Later in the proof we impose further restrictions on $u$.)
For $x\in \d(S)$ we have $(x+Ru)\cap S=[x,y]$ for a unique $y\in S$, and for
this $y$
we have $(x+Ru)\cap \d(S)=\{x,y\}$; we define $\lambda: \d(S) \to \d(S)$ by
$\lambda(x)=y$ for $y$ as above. Note that $\lambda\circ \lambda=\lambda$.

\medskip\noindent
{\em Claim 1 \/}: $\lambda$ is continuous. To see this we note that
the closed subsets $F_i \cap (H_i, H_j)^{-1}(F_j),\ (0\le i,j \le k)$
of $\d(S)$ cover $\d(S)$. In view of the first remark
of this section $\lambda$ agrees on each such
$F_i\cap (H_i, H_j)^{-1}(F_j)$ with the continuous map $(H_i, H_j)$.

\medskip\noindent
We now extend $f$ to $g\colon S \rightarrow R$ by setting, for $x\in
\d(S)$,
$$g((1-t)x+t\lambda (x)) = (1-t)f(x)+tf(\lambda (x)).$$
{\em Claim 2 \/}: $g$ is continuous. To see this, define
$$ \alpha : [0,1]\times \d(S) \to R^n, \quad \alpha(t,x) = (1 - t)x + t\lambda(x).$$
Then $\alpha$ is definable and continuous, $\alpha([0,1]\times \d(S)) = S$, and
$g\circ \alpha$ is continuous,
so $g$ is continuous by p. 96, Corollary 1.13 in \cite{book}.

\medskip\noindent
It is easy to check that if the points $\st(a_0),\dots, \st(a_k)$ in $\bk^n$
are affinely
independent, then $g$ induces a function on $\st(S)$, so in what follows
we assume that $\st(a_0),\dots, \st(a_k)$ are not affinely
independent. Then $\st(S)$ has dimension $d<k$, and we can assume that
$\st(a_0),\dots, \st(a_d)$ are affinely independent.

\smallskip\noindent
{\em Claim 3 \/}: $\st S=\st \d(S)$. To see this, note first that the
affine span of $\st S=[\st(a_0),\dots, \st(a_k)]$ in $\bk^n$ has dimension $d$. Then by a lemma of Carath\'{e}odory
(p. 126 in \cite{book}), each element of $\st S$ is in the convex hull of a
subset of $\{\st(a_0),\dots, \st(a_k)\}$ of size $\le d+1$, and so in
$\st(F)$ for some proper face $F$ of
$S$. This proves Claim 3.

\medskip\noindent
The functions $\lambda$ and $g$ depend on $u$, and without further
specifying $u$
we cannot expect $g$ to
induce a function on $\st S$. We now restrict $u$ as follows:
$a_k\notin H_k=\<a_0,\dots, a_{k-1}\>$ but
$\st(a_k)\in \st H_k$, so we can take $u$ as above such that $a_k+u\in H_k$
and $\st(u)$ is the zero vector of $\bk^n$.

\medskip\noindent
{\em Claim 4 \/}: $d(x,\lambda(x))\in \mathfrak{m}$ for all $x\in \d(S)$.
To see this, note that
$S$ lies between $H_k$ and $H_k+ u$, that is,
$S\subseteq \{x+tu:\ x\in H_k,\ 0\le t \le 1\}$.
This is because $\{x+tu:\ x\in L,\ 0\le t \le 1\}$ is convex, and contains $a_0,\dots, a_k$. For $a\in H_k$,
$$(a+Ru)\cap \{x+tu: x\in H_k,\  0\le t \le 1\} = [a,a+u].$$
Given $x\in \d(S)$ the line $x+Ru$ equals
$a+Ru$ where $(x+Ru)\cap H_k=\{a\}$, and so
$[x, \lambda(x)] \subseteq [a, a+u]$,
so $d(x,\lambda(x)) \le d(a, a+u)$, whence the claim.

It is clear from Claims 2 and 4 that $g$ induces a function on $\st S$.
\end{proof}

\noindent
A {\em subcomplex\/} of a complex $K$ in $R^n$ is a subset $L$ of $K$ such that if
$F$ is a face of any $S\in L$, then $F\in L$; note that then $L$ is also a complex in $R^n$.

\begin{lemma}\label{ext2}
Let $L$ be a subcomplex of a $V$-bounded complex $K$ in $R^n$, and let
$f\colon |L|\to R$ be a $V$-bounded continuous definable
function inducing a function $ \st |L|\to
\boldsymbol{k}$. Then $f$ has a $V$-bounded continuous definable extension
$|K|\to R$ inducing a function $\st |K| \to
\boldsymbol{k}$.
\end{lemma}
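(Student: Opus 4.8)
The plan is to extend $f$ one simplex at a time, using Lemma~\ref{ext1} as the inductive step. The natural filtration is by dimension: let $K$ have simplexes of dimension at most $N$, and for $0 \le m \le N$ let $K^{(m)}$ be the subcomplex consisting of all simplexes of $K$ of dimension $\le m$ together with all of $L$. Then $K^{(-1)} := L$ (or one can start with $L \cup K^{(0)}$), and $K^{(N)} = K$, and $|K^{(m)}|$ is obtained from $|K^{(m-1)}|$ by adjoining the closed simplexes $S \in K$ of dimension exactly $m$ that are not already in $L$. First I would reduce to the case where we go from $|K^{(m-1)}|$ to $|K^{(m)}|$ by adjoining a single $m$-simplex $S$: the various new $m$-simplexes $S$ have pairwise intersections lying in $K^{(m-1)}$ (since $K$ is a complex and any proper face of an $m$-simplex has dimension $< m$), so the extensions over distinct $S$'s can be performed independently and glued. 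Then for a single new $m$-simplex $S$, note $\partial(S) \subseteq |K^{(m-1)}|$ because every proper face of $S$ is a simplex of $K$ of dimension $< m$, hence lies in $K^{(m-1)}$.

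So at each stage I have a $V$-bounded continuous definable $f\colon |K^{(m-1)}| \to R$ inducing a function on $\st|K^{(m-1)}|$, and I want to extend it over $S$. Restricting $f$ to $\partial(S)$ gives a $V$-bounded continuous definable function $\partial(S) \to R$ inducing a function $\st \partial(S) \to \bk$ (the induced-function property is inherited by restriction to any definable $V$-bounded subset). Here I should check that $S$ is a $V$-bounded simplex — this follows because $K$ is $V$-bounded, so $|K|$ and hence $S \subseteq |K|$ is $V$-bounded. Now Lemma~\ref{ext1} applies and yields a $V$-bounded continuous definable extension $g_S\colon S \to R$ inducing a function $\st S \to \bk$. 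Doing this for all new $m$-simplexes $S$ and taking the common extension (well-defined and continuous since the pieces agree on $|K^{(m-1)}|$, which is closed, and on the pairwise intersections of the $S$'s, which lie in $|K^{(m-1)}|$; use the pasting lemma, e.g.\ p.~96, Corollary 1.13 of \cite{book}, as in the proof of Lemma~\ref{ext1}) gives $f\colon |K^{(m)}| \to R$, still $V$-bounded, continuous, definable.

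The one point needing a little care is that the extended map still \emph{induces} a function on all of $\st|K^{(m)}|$, not merely on $\st S$ and $\st|K^{(m-1)}|$ separately. This is because $\st(|K^{(m)}|) = \st(|K^{(m-1)}|) \cup \bigcup_S \st(S)$, and a map is said to induce a function on a union of $V$-bounded definable sets exactly when it induces one on each piece and these agree on overlaps; the overlaps here are contained in $\st(|K^{(m-1)}|)$ where everything is already consistent, and $\st$ commutes with the finite union. So the inductive hypothesis is maintained. Iterating from $m = 0$ (where $|K^{(-1)}| = |L|$ and we adjoin the vertices of $K$ not in $L$ — a trivial case of Lemma~\ref{ext1}, $k=0$) up to $m = N$ yields the desired extension to $|K| = |K^{(N)}|$. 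I expect the main obstacle, such as it is, to be bookkeeping rather than mathematics: being careful that the "induces a function" property behaves well under the gluings and under passage to subsets and finite unions, and that each new simplex really is a $V$-simplex with $\partial(S)$ inside the previously-built domain so that Lemma~\ref{ext1} is legitimately applicable. All of these are routine given Lemma~\ref{ext1} and the elementary properties of complexes recorded in Section~\ref{cc}.
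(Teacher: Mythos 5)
Your overall strategy (inductive extension, one dimension level at a time, invoking Lemma~\ref{ext1} at each step) is the same as the paper's, which extends one simplex of minimal dimension at a time; the filtration-by-skeleta framing is just a repackaging of that. However, there is a genuine gap at the $0$-dimensional step, and it is precisely the step where the paper's proof deviates from the naive pattern you follow.

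You handle the adjunction of a vertex $a$ of $K$ not already in $|L|$ by calling it ``a trivial case of Lemma~\ref{ext1}, $k=0$.'' But Lemma~\ref{ext1} with $k=0$ has $\d(S)=\emptyset$, so it places \emph{no} constraint on the value $g(a)$: any choice works for the conclusion of Lemma~\ref{ext1}, e.g.\ $g(a)=0$. That value will in general be inconsistent in standard part with the already-defined $f$ on $|L|$ when $\st(a)\in\st|L|$, i.e.\ when $d(a,|L|)\in\m$. Your later remark that ``the overlaps here are contained in $\st(|K^{(m-1)}|)$ where everything is already consistent'' is circular at exactly this point: what must be checked is that the \emph{newly constructed} piece agrees in standard part with $f$ on that overlap, and Lemma~\ref{ext1} does not hand you that agreement for free. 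The paper's proof does \emph{not} use Lemma~\ref{ext1} for vertices; instead it explicitly splits into the two cases $d(a,|L|)>\m$ (set $f'(a)=0$) and $d(a,|L|)\in\m$ (choose $b\in|L|$ with $d(a,b)\in\m$ and set $f'(a)=f(b)$). The second case is exactly the consistency argument your proof omits. So the $0$-simplex step needs to be repaired by choosing the value compatibly with $f$, as in the paper, rather than by appealing to the $k=0$ case of Lemma~\ref{ext1}.

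For the $k>0$ step your outline matches the paper (all proper faces of a minimal-dimensional new simplex $S$ lie in the current domain, so $\d(S)$ is covered and Lemma~\ref{ext1} applies), but the same caveat applies to your ``already consistent'' remark: Lemma~\ref{ext1} only guarantees that $g$ agrees with $f$ on $\d(S)$, and one has to argue (or accept, as the paper does without further elaboration) that this suffices for agreement in standard part on all of $\st S\cap\st|L|$. You should at least flag that this is the point being used, rather than presenting it as automatic bookkeeping.
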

\begin{proof} We can assume $L\not= K$, and it
suffices to obtain a strictly
larger subcomplex $L'$ of $K$ and a $V$-bounded continuous definable extension
$f' \colon |L'|\to R$ of $f$ inducing a function
$\st|L'|\to \boldsymbol{k}$.
Take a simplex
$S \in K\setminus L$ of minimal dimension.

Suppose $S = \{ a \}$ with $a \in R^n$. Then $L'=L\cup \{ S \}$ is
subcomplex of $K$ and $L \ne L'$. If
$d(a,|L|)>\ma$, then $f'(a)=0$ determines an extension
of $f$ to $|L'|\to R$ with the required properties. If
$d(a,|L|)\in \ma$, then we can pick $b \in |L|$ such that
$d(a,b)\in \ma$ and define an extension as desired by
$f'(a)=f(b)$.

Next, assume that $S$ is a $k$-simplex with $k>0$. Then all
proper faces of $S$ are in $L$, so $\d(S) \subseteq
|L|$, and by the previous lemma, the function $f|\d(S)$
extends to a $V$-bounded continuous definable function $g\colon
S\to R$ inducing a
function $\st (S )\to \boldsymbol{k}$. Also, $L'= L\cup
\{S \}$ is a subcomplex of $K$, $L\ne L'$, $f$
extends to a $V$-bounded continuous function $f'\colon |L'|\to R$ defined
by $f'(x)=f(x)$ when $x\in |L|$ and $f' (x)=g(x)$ when
$x\in S$, and $f'$ induces a
function $\st (|L'|)\to \boldsymbol{k}$.
\end{proof}

\medskip\noindent
{\bf Good directions.}
In o-minimal triangulation we use extension lemmas in combination with the existence of good directions. For $V$-triangulation we need to sharpen this a little bit. Let
$$\Sp^n:= \{(x_1,\dots, x_{n+1})\in R^{n+1}:\ x_1^2 + \dots + x_{n+1}^2=1\}$$
and define $\Sp^n(\bk)\subseteq \bk^{n+1}$ likewise, with $\bk$ instead of $R$.
A unit vector $u\in \Sp^n$ is a {\em good
direction\/} for a set $X\subseteq R^{n+1}$ if for each $a\in
R^{n+1}$ the line $a+Ru$ intersects $X$ in only finitely many
points. Likewise we define what it means for a vector in $\Sp^n(\bk)$
to be a good direction for a set $X\subseteq \bk^{n+1}$.

\begin{lemma}\label{gooddir}
Let $X\subseteq R^{n+1}$
be definable with $\dim X \le n$. Then there is a good direction $v\in
\Sp^{n}(\boldsymbol{k})$ for $\st X$ such that all $u\in
\Sp^{n}$ with $\st(u)=v$ are good directions for $X$.
\end{lemma}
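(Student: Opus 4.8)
\textbf{Proof proposal for Lemma~\ref{gooddir}.}

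The plan is to reduce this to the ordinary good-direction lemma in the o-minimal field $R$ (the analogue of Lemma (2.2) in Chapter 8 of \cite{book}, which says the set of bad directions for a definable $X\subseteq R^{n+1}$ with $\dim X\le n$ has dimension $\le n-1$ in $\Sp^n$), and then transfer the conclusion through the standard part map. First I would set $B\subseteq \Sp^n$ to be the set of \emph{bad} directions for $X$, i.e.\ those $u$ such that $a+Ru$ meets $X$ in infinitely many points for some $a\in R^{n+1}$. This set $B$ is definable, and by the classical good-direction lemma $\dim B\le n-1$; moreover $B$ is closed under the antipodal map. Since $\dim B\le n-1 < n=\dim \Sp^n$, the set $\st(B)\subseteq \Sp^n(\bk)$ has dimension $\le n-1$ in $\bk_{\ind}$ (here I use that dimension does not increase under $\st$, e.g.\ Corollary 2.8 in \cite{st1}, together with the fact that $\st$ of the definable set $X\cap V^{n+1}$ behaves well), so in particular $\st(B)\ne \Sp^n(\bk)$.

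Next I need a good direction for $\st X$. The point is that $\st X=\st(X\cap V^{n+1})$ is definable in $\bk_{\ind}$ and has dimension $\le n$, so the good-direction lemma applied \emph{inside the o-minimal structure $\bk_{\ind}$} (legitimate since we have assumed $\bk_{\ind}$ is o-minimal, and the preceding discussion in this section only uses o-minimality of the ambient field) gives that the set $B'\subseteq \Sp^n(\bk)$ of bad directions for $\st X$ also has dimension $\le n-1$. Then $\st(B)\cup B'$ is a subset of $\Sp^n(\bk)$ of dimension $\le n-1$, hence is not all of $\Sp^n(\bk)$; pick any $v\in \Sp^n(\bk)\setminus\bigl(\st(B)\cup B'\bigr)$. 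Since $v\notin B'$, $v$ is a good direction for $\st X$. Since $v\notin \st(B)$, no $u\in \Sp^n$ with $\st(u)=v$ lies in $B$ (because $u\in B$ would force $v=\st(u)\in\st(B)$), so every such $u$ is a good direction for $X$. That is exactly the assertion.

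The step I expect to be the main obstacle is not the counting of dimensions but making sure the dimension inequality $\dim \st(B)\le \dim B$ is applied correctly: $B$ is a definable subset of $R^{n+1}$ (not $V$-bounded in general, but $\Sp^n\subseteq I^{n+1}\subseteq V^{n+1}$, so $B\subseteq V^{n+1}$ and $\st(B)$ is genuinely $\st$ of a definable set), and one must cite the right result from \cite{st1} guaranteeing $\dim_{\bk_{\ind}}\st(B)\le \dim_R B$. Once that is in hand, the argument is just: bad directions for $X$ and for $\st X$ together form a small set upstairs and downstairs, choose $v$ outside both, and observe that the fibre of $\st$ over $v$ misses the upstairs bad set. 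One should also note at the start that $\Sp^n$ and $\Sp^n(\bk)$ are nonempty of dimension $n$ (so there really is room to choose $v$), which is immediate.
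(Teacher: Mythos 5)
Your proof is correct and follows essentially the same route as the paper's: bound the dimension of the bad directions $B$ for $X$ using the Good Directions Lemma, bound the dimension of $\st(B)$ together with the bad directions for $\st X$ in $\bk_{\ind}$ (using o-minimality of $\bk_{\ind}$ and the fact that $\st$ does not increase dimension), and pick $v$ in the complement, noting that $v\notin\st(B)$ forces every $u\in\Sp^n$ with $\st(u)=v$ to lie outside $B$. The only cosmetic difference is that the paper selects $v$ via a box $C$ with $\closure{C}$ disjoint from the bad set rather than directly; also note your remark that $B$ is ``not $V$-bounded in general'' is a slip ($B\subseteq\Sp^n\subseteq I^{n+1}$ is $V$-bounded), though you immediately and correctly use $B\subseteq V^{n+1}$, so nothing is affected.
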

\begin{proof} We have $\dim{(\st X)} \le n$, for example by Corollary 2.8 in
~\cite{st1}.
Call $u\in
\Sp^{n}$ a {\em bad direction\/} for $X$ if $u$ is not a good
direction for $X$, and define bad directions for $\st X$ similarly.
Let $B\subseteq \Sp^{n}$ be the set of bad directions for $X$, so $B$
is definable and $\dim B < n$ by the Good Directions Lemma on p. 117 of
\cite{book}. Put
$$B':= \st(B) \cup \text{ set of bad directions for }\st X\ \subseteq \Sp^{n}(\boldsymbol{k}).$$
Since $\boldsymbol{k}_{\ind}$ is o-minimal, the set $B'$ is definable in
$\boldsymbol{k}_{\ind}$, and $\dim B' < n$.
It follows that we have a box $C\subseteq \boldsymbol{k}^{n+1}$ such that $C\cap
\Sp^{n}(\boldsymbol{k}) \not=\emptyset$ and
$\closure{C}\cap B' = \emptyset$. Then any $v\in C\cap
\Sp^{n}(\boldsymbol{k})$ has the desired property.
\end{proof}

\noindent
We define a {\em $V$-good direction\/} for a set $X\subseteq R^{n+1}$ to be unit vector
$u\in \Sp^n$ such that $u$ is a good direction for $X$ and $\st(u)\in \Sp^n(\bk)$
is a good direction for $\st X$. The above lemma yields an abundance of $V$-good
directions for $X$ if $X\subseteq R^{n+1}$ is definable with $\dim X \le n$.

\end{section}

\begin{section}{The Triangulation Lemma}

\noindent
In this section we construct a $V$-triangulation of a definable closed $V$-
bounded set in $R^{n+1}$ if a suitable $V$-triangulation of
its projection in $R^n$ is given. First some more notation and terminology.

\medskip\noindent
Let $K$ be a complex in $R^n$. Let $\text{Vert}(K)$ denote the set
of vertices of the simplexes in $K$. Let $(\phi,K)$ be a
triangulation of a definable closed $X\subseteq R^n$, and let $p=p_n^{n+1}
\colon R^{n+1} \to R^n$ be the projection map given by
$$p(x_1,\dots, x_{n+1})=(x_1,\dots, x_n).$$
Then, given definable closed $Y\subseteq R^{n+1}$, a triangulation
$(\psi,L)$ of $Y$ is said to be a {\em lifting\/} of $(\phi,K)$ if
$K=\{p(T): T\in L\}$ (so $K^{\o}  =\{p(T^{\o}  ):\  T\in L\}$)
and the diagram

$$\xymatrix@R+1em@C+7em{
Y \ar[d]\ar[r]^{\psi}      & |L| \ar[d]  \\
X    \ar[r]^{\phi}& |K|
}$$

\medskip\noindent
commutes where the vertical maps are restrictions of $p$ (so
$p(Y)=X$).

To construct liftings we use triangulated sets and
multifunctions
on them, and we proceed to define these notions.
We set
$$\phi^{-1}(K):= \{\phi^{-1}(S):\ S\in K\},$$ and call the pair
$(X, \phi^{-1}(K))$ a {\em triangulated set}.
To simplify notation, let $\cP:=\phi^{-1}(K)$. For $P,Q\in \cP$ we call
$Q$ a {\em face\/} of $P$ if $Q\subseteq P$
(equivalently, $\phi(Q)$ is a face of the simplex $\phi(P)$).
For $P\in \cP$, a {\em proper face\/} of $P$ is a face $Q\in \cP$ of $P$ such that
$P\ne Q$. For $P\in \cP$ we put
$$P^{\o}\ :=\  P \setminus \text{union of the proper faces of }P,$$
so $\phi(P^{\o})=\phi(P)^{\o}$. A point $x\in X$ such that $\{x\}\in
\cP$ (that is, $\phi(x)\in \text{Vert}(K)$) is said to be a {\em
vertex\/} of $(X, \cP)$. A {\em multifunction\/} on $(X, \cP)$ is a
finite collection $F$ of continuous definable functions $f\colon X
\to R$ such that for all $f,g\in F$ and $P\in \cP$, either $f(x) <
g(x)$ for all $x\in P^{\o}$, or $f(x) =g(x)$ for all $x\in P^{\o}$,
or $g(x) < f(x)$ for all $x\in P^{\o}$.

Let $F$ be a multifunction on $(X, \cP)$. For $P\in \cP$ and $f,g\in F$
we say that $g$ is the {\em successor\/} of $f$ on $P$ (in $F$) if
$f(x) < g(x)$ for all $x\in P^{\o}$ (so $f(x) \le g(x)$ for all $x\in P$),
and
there is no $h\in F$ such that $f(x) < h(x) < g(x)$ for all
(equivalently, for some) $x\in P^{\o}$. We set
\begin{enumerate}
\item[(a)] $\Gamma(F):= \bigcup_{f\in F} \Gamma(f) \subseteq R^{n+1}$;
\item[(b)] $F|P:= \{f|P:\  f\in F\}$ for $P\in \cP$;
\item[(c)]  $\cP^F$ is the collection of all sets $\Gamma(f|P)$ with $f\in F$
and $P\in \cP$, and all sets $[f|P, g|P]$ with $P\in \cP$, $f,g\in F$ and
$g$ the successor of $f$ on $P$;
\item[(d)] $X^F: = \text{the union of the sets in }\cP^F$, so
$X^F\subseteq R^{n+1}$.
\end{enumerate}

\noindent
So $\Gamma(F)$ and $X^F$ are closed and bounded in $R^{n+1}$.

\medskip\noindent
The above material in this section does not require the presence of $V$, and
so makes sense and
goes through for any o-minimal field instead of $R$, in particular, for
$\bk_{\ind}$ if the latter is o-minimal. We now bring in $V$ again, and note
that if $(\phi, K)$ is a $V$-triangulation of the definable closed
$V$-bounded $X\subseteq R^n$, then the triangulation
$(\phi_{\st}, \st(K))$ of $\st X$ yields the triangulated set
$(\st X, \st \cP)$, with $\cP:= \phi^{-1}(K)$, and
$$\st \cP:= \{\st P:\ P\in \cP\}=\phi_{\st}^{-1}\big(\st(K)\big).$$

\medskip\noindent
{\bf Remark.} Suppose $\bk_{\ind}$ is o-minimal. Let $(\phi, K)$ be
a $V$-triangulation of the definable closed $V$-bounded set
$X\subseteq R^n$. Let $F$ be a multifunction on $(X, \cP)$ with
$\cP:= \phi^{-1}(K)$, such that each $f\in F$ induces a function
$f_{\st} \colon \st X \to \bk$, and for all $f,g\in F$ the set
$\{y\in \st X:\ f_{\st}(y) =g_{\st}(y)\}$ is a union of sets in $\st
\cP$.
Then $F_{\st}:= \{f_{\st}:\ f\in F\}$ is a multifunction on
$(\st X, \st \cP)$, with
$$\Gamma(F_{\st})=\st(\Gamma F),\quad (\st \cP)^{F_{\st}}=\{\st Q:\ Q\in \cP^F\},
\quad (\st X)^{F_{\st}}= \st(X^F).$$
(The middle equality requires a little thought.)

\begin{lemma}\label{trainglemma} Suppose $\bk_{\ind}$ is o-minimal, and
$\phi, K, X, \cP, F$ are as in the remark above. Assume also that
for all $P \in \cP$ and all $Q \in \st \cP$:
\begin{itemize}
\item[$(\ast)$] if $f , g \in F|P$, $f \not= g$, then
$f(a)\ne g(a)$ for some vertex $a$ of $P$;
\item[$(\ast \ast)$] if $f , g \in F_{\st}|Q$, $f \not= g$,
then $f(a)\ne g(a)$ for some vertex $a$ of $Q$.
\end{itemize}
Then there is a $V$-triangulation $(\psi ,L)$ of $X^{F}$ such
that $(\psi , L)$ is a lifting of $(\phi ,K)$ compatible
with the sets in $\cP^F$, and $(\psi_{\st}, \st(L))$ is a lifting
of $(\phi_{\st} , \st(K))$ compatible with the sets in
$\st(\cP)^{F_{\st}}$.
\end{lemma}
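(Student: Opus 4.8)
The plan is to follow the proof of the corresponding lemma (2.2) in Chapter 8 of \cite{book}, constructing the lifting $(\psi,L)$ of $(\phi,K)$ explicitly from the data $F$, and then to check at each stage that the construction respects the standard part map so that $(\psi_{\st},\st(L))$ is simultaneously a lifting of $(\phi_{\st},\st(K))$. First I would fix for each $S\in K$ a reference vertex and use $\phi$ to transport the combinatorial structure of $K$ to $\cP=\phi^{-1}(K)$. The complex $L$ will consist of one simplex for each pair $(P,f)$ with $f\in F$ (the ``graph'' simplices, with vertices the points $(a,f(a))$ for $a$ a vertex of $P$) together with one simplex for each triple $(P,f,g)$ with $g$ the successor of $f$ on $P$ (the ``band'' simplices $[b_0,\dots,b_i,c_i,\dots,c_m]$ from Section~\ref{cc}, with $b_j=(a_j,f(a_j))$ and $c_j=(a_j,g(a_j))$), and all their faces. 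Here the enumeration of the vertices $a_0,\dots,a_m$ of $P$ must be chosen so that $\st(a_0),\dots,\st(a_m)$ is simplicial; this is possible by Lemma~\ref{vg}, and is exactly what makes the construction of Section~\ref{cc} applicable. Conditions $(\ast)$ and $(\ast\ast)$ are what guarantee that distinct elements of $F|P$ (resp.\ $F_{\st}|Q$) have distinct ``profiles'' on the vertices, so that the graph simplices are genuine $m$-simplices and the band simplices are genuine $(m+1)$-simplices, both over $R$ and over $\bk$.

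Next I would verify that $L$ is a $V$-complex and that $|L|=X^F$. That $L$ is a complex over $R$, and that $|L|=X^F$, is essentially Lemma (2.2) of \cite{book} applied simplex-by-simplex, using Lemma (1.10) of that chapter for the band pieces; the compatibility with the sets in $\cP^F$ is built into the construction since each $\Gamma(f|P)$ and each $[f|P,g|P]$ is a union of elements of $L^{\o}$. To see that $L$ is $V$-bounded, note that $\Gamma(F)$ and $X^F$ are $V$-bounded because each $f\in F$ is $V$-bounded and $X$ is $V$-bounded. The crucial point is that $\st(L)$ is a complex in $\bk^{n+1}$: for a band simplex built from $(P,f,g)$ over $R$, the hypotheses that $f,g$ induce $f_{\st},g_{\st}$ and that $\{f_{\st}=g_{\st}\}$ is a union of sets in $\st\cP$ mean that, after passing to standard parts, the vertices $\st(b_j),\st(c_j)$ satisfy exactly the compatibility conditions ($\st(r_i)=\st(r_{i_\kappa})$, etc.) required in Corollary~\ref{vgood}; so Lemma~\ref{vs} and Corollary~\ref{vgood} show that the standard parts of the simplices in $L$ again form a complex, and $\st(L)=(\st\cP)^{F_{\st}}$-adapted by the middle equality recorded in the Remark. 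Condition $(\ast\ast)$ ensures that the relevant standard-part simplices do not degenerate below the expected dimension on the simplicial part.

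Then I would define $\psi\colon X^F\to |L|$ and check it is a definable homeomorphism lifting $\phi$. On each piece $\Gamma(f|P)$ one sets $\psi(x,f(x))$ to be the point with the same barycentric coordinates (relative to the chosen vertex enumeration of $P$, transported through $\phi$) in the corresponding graph simplex; on each band $[f|P,g|P]$ one interpolates linearly in the ``vertical'' parameter $t\in[0,1]$ between the two graph faces. This is visibly definable, continuous on each closed piece, and agrees on overlaps, hence continuous on $X^F$; it is a bijection onto $|L|$ with continuous definable inverse by the same piecewise description, and it lifts $\phi$ by construction since $p\circ\psi=\phi\circ p$ on each piece. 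The map $\psi$ is $V$-bounded because its graph sits inside $X^F\times|L|$, which is $V$-bounded. Finally, because each building block of $\psi$ is given by an affine formula in barycentric coordinates whose coefficients are the values $f(a)$ at vertices (all in $V$, inducing $f_{\st}(\st a)$), $\psi$ induces a map $\psi_{\st}\colon\st(X^F)\to\st(|L|)$, and running the identical piecewise description over $\bk$ with $F_{\st}$ in place of $F$ shows that $(\psi_{\st},\st(L))$ is a triangulation of $\st(X^F)=(\st X)^{F_{\st}}$ which is a lifting of $(\phi_{\st},\st(K))$ compatible with the sets in $\st(\cP)^{F_{\st}}$. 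Putting these together gives that $(\psi,L)$ is a $V$-triangulation with all the asserted properties.

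I expect the main obstacle to be the bookkeeping needed to confirm that $\st(L)$ is a complex and that passing to standard parts commutes with the construction---i.e.\ the ``little thought'' needed for the middle equality $(\st\cP)^{F_{\st}}=\{\st Q:\ Q\in\cP^F\}$ in the Remark, together with the verification that band simplices and their faces do not collapse or overlap incorrectly after applying $\st$. This is where conditions $(\ast)$ and $(\ast\ast)$ and the simplicial ordering from Section~\ref{cc} must be used carefully: one has to match the indices $i_0<\dots<i_k$ recording which vertices of $P$ have equal standard part with the indices recording which functions in $F$ have equal standard part on $P^{\o}$, and check that the resulting doubly-indexed family of simplices is exactly the complex produced by Lemma~\ref{vs}. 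Everything else (definability, continuity, the homeomorphism property, $V$-boundedness) is routine once this combinatorial core is in place.
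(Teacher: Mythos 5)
Your overall plan is the right one (follow the book's lifting construction, plugging in the $V$-complex machinery of Section~\ref{cc}), but there is a gap at exactly the point where the $V$-version requires a new idea: you choose the vertex enumeration of each $P$ separately so that $\st(a_0),\dots,\st(a_m)$ is simplicial, invoking Lemma~\ref{vg}, and never address whether these per-simplex choices can be made \emph{consistently across faces}. In the book's lifting construction the prism over a simplex is subdivided according to an ordering of its vertices, and if two simplexes $P, P'$ of $\cP$ share a face, the subdivisions of the corresponding prisms must agree along the shared face, which forces the enumerations to be restrictions of a single global linear order on $\text{Vert}(K)$. Lemma~\ref{vg} only gives an admissible ordering simplex-by-simplex, and in general these need not fit together. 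Without consistency, $L$ (and likewise $\st(L)$) may fail to be a complex, so the construction stalls before you ever get to checking that $\st(L)$ satisfies the conditions of Corollary~\ref{vgood}.

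The paper resolves this by first choosing one total order $\le$ on $\text{Vert}(K)$ with the property that whenever $a \le b \le c$ and $\st(a)=\st(c)$, then $\st(a)=\st(b)$ (equivalently, the $\st$-fibers on $\text{Vert}(K)$ are intervals of $\le$). The restriction of this global order to the vertices of any $P\in\cP$ then automatically yields a simplicial enumeration of their standard parts, giving simultaneously the globally consistent ordering needed for the band complexes over $R$ and the induced ordering on $\text{Vert}(\st(K))$ needed for the band complexes over $\bk$. That single ordering is what lets one run the book's construction once over $R$ to get $(\psi,L)$ and once over $\bk$ to get $(\theta,M)$, and then prove $\psi$ induces $\theta$ and $\st(L)=M$. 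A secondary slip in your write-up is the claim $|L|=X^F$: as in the book, $|L|$ lives over $|K|$ rather than over $X$, and $\psi$ is a homeomorphism $X^F\to|L|$, not an identity. Everything else you outline (the role of $(\ast)$ and $(\ast\ast)$, the use of Lemma~\ref{vs} and Corollary~\ref{vgood}, the definition of $\psi$ by barycentric interpolation and the verification that it induces a map on standard parts) matches the paper's argument.
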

\begin{proof} Choose a total ordering $\le$ on
$\text{Vert} (K)$ such that for all
$a ,b,c \in \text{Vert}(K)$ with $a \le b \le c$ and $\st (a)=\st (c)$ we have
$\st(a)=\st(b)$. This gives a total ordering $\le$ on
$\text{Vert}(\st(K))$ such that if
$a ,b\in \text{Vert}(K)$ and $a \le b$, then $\st (a)\le\st (b)$.
Now $(\phi , K)$, $X$, $F$ are as in the proof of Lemma 2.8, p.129
in \cite{book}, and we
apply the construction from this proof, using the given ordering on
$\text{Vert}(K)$, to obtain a triangulation
$(\psi , L)$ of $X^F$ that
is a lifting of $(\phi , K)$ and is compatible with the sets
in $\cP^F$. We now briefly recall the construction of $(\psi,L)$
from \cite{book}.

 Let $P\in\cP$, let $a_0 , \dots ,a_m$ be the vertices of $\phi (P)$
such that in the ordering above we have $a_0 < a_1 < \dots < a_m$, and let
$f\in F|P$. Then the complex $L(f)$ in $R^{n+1}$ consists
of the $m$-simplex $[b_0 , \dots ,b_m ]$ and all its faces,
where each $b_i = (a_i,r_i)\in R^{n+1}$, $r_i:= f(\phi^{-1} (a_i ))$. Define
$$\psi_f
\colon \Gamma (f) \to |L(f)|, \qquad \psi_f (x, f(x)) := \phi_b (x),$$ where $\phi_b (x)$ is
the point of $[b_0 , \dots ,b_m ]$ with the same affine coordinates
with respect to $b_0 , \dots ,b_m$ as $\phi(x)$ has with respect to $a_0 ,
\dots ,a_m$. Then $\psi_f$ is a homeomorphism.

Suppose in addition that $f$ has a successor $g \in F|P$. Then $L(f,g)$ is the
complex $L$ constructed just before Corollary~\ref{vgood}, so $|L(f,g)|$ is the convex hull
of $\{ b_0 , \dots , b_m , c_0 ,\dots ,c_m   \}$, where
$c_i = (a_i, s_i)\in R^{n+1},\ s_i:=g(\phi^{-1}(a_i ))$. Then the homeomorphism $\psi_{f,g} \colon
[f, g] \to |L(f,g)|$ is given
by $$(x,tf(x) + (1-t)g(x))\mapsto t\phi_b (x)
+(1-t)\phi_c (x),$$ where $\phi_c (x)$ is defined in the same way as
$\phi_b (x)$, and $0\leq t \leq 1$.

The complex $L$ is the union of the complexes $L(f)$ and $L(f,g)$
obtained in this way, and $\psi \colon X^F \to |L|$ extends each of
the $\psi_f$ and $\psi_{f,g}$ above.

\medskip\noindent
Also, $(\phi_{\st} , \st(K))$, $\st X$ and $F_{\st}$
are as in the proof of Lemma 2.8, p. 129  in \cite{book}, with $\bk_{\ind}$
instead of $R$. Thus using the given ordering on
$\text{Vert}(\st(K))$ we construct in the same way as before
a triangulation  $(\theta , M)$ of $(\st
X)^{F_{\st}}$ that is a lifting of $(\phi_{\st} , \st(K))$ and is
compatible with the sets in $\st(\cP)^{F_{\st}}$.

\medskip\noindent
{\em Claim \/}:\  $\psi$ induces $\theta$. To prove this, let $P\in \cP$ and let
$a_0 < \dots < a_m$ be the vertices of the simplex $\phi P\in K$. Put
$Q:=\st P \in \st \cP$, and let the simplex
$\phi_{\st}(Q)=\st(\phi P)\in \st(K)$ have as its vertices
$\alpha_0 <  \dots < \alpha_k $. Then
$$\{0,\dots,m\}=I_0 \cup \dots \cup I_k\
\text{ (disjoint union) with }I_j:=\{i:\ \st(a_i)=\alpha_j\}.$$
Here and later in the proof $i$ ranges over $\{0,\dots,m\}$ and $j$ over
$\{0,\dots,k\}$. Let $f\in F$ and, towards showing that $\psi$ induces
$\theta$ on $\Gamma(f|P)$, put
\begin{align*} b_i &:= (a_i,r_i)\in R^{n+1}, \qquad r_i:= f(\phi^{-1} (a_i )),\\
         \beta_j&:=(\alpha_j, \rho_j)\in \bk^{n+1}, \qquad \rho_j:=f_{\st}(\phi_{\st}^{-1} (\alpha_j )),
\end{align*}
so $\st(b_i)=\beta_j$ for $i\in I_j$. Let  $x\in P$. Then $\phi(x)=\sum_i t_i
a_i$, where all $t_i\geq 0$ and $\sum_{i}t_i = 1$, so
$\phi_{\st}(\st x)=\sum_{j}\tau_j\alpha_j$ with $\tau_j=\sum_{i\in I_j} t_i$.
Then
$$ \psi (x,f(x))=\sum_{i} t_i b_i, \qquad
\theta\big(\st x, f_{\st}(\st x)\big)=\sum_j \tau_j \beta_j,$$ so
$\st\big(\psi(x,f(x))\big)=\theta\big(\st (x, f(x))\big)$. Thus $\psi$ induces
$\theta$ on $\Gamma(f|P)$.

Next, assume also that $f$ has a successor $g\in F$ on $P$, and put
\begin{align*} c_i &:= (a_i,s_i)\in R^{n+1}, \qquad s_i:= g(\phi^{-1} (a_i ))\\
         \gamma_j&:=(\alpha_j, \sigma_j)\in \bk^{n+1}, \qquad \sigma_j:=g_{\st}(\phi_{\st}^{-1} (\alpha_j )),
\end{align*}
so $\st(c_i)=\gamma_j$ for $i\in I_j$. Then, with $\bar{x}:= \st x,\ \bar{t}:= \st t$,
\begin{align*} \psi(x,t f(x) + (1-t)g(x))&=
t\sum_{i}t_i b_i + (1-t)\sum_{i}t_i c_i ,\\
\theta\big(\bar{x}, \bar{t}f_{\st}(\bar{x})+(1-\bar{t})g_{\st}(\bar{x})\big)&=
\bar{t}\sum_j\sigma_j\beta_j  + (1-\bar{t})\sum_j \sigma_j\gamma_j .
\end{align*}
To obtain the second identity, note that either $f_{\st}$ and $g_{\st}$ coincide
on $\st P$, or $g_{\st}$ is the successor of $f_{\st}$ on $\st P$ (in $F_{\st}$).
It follows as with $\Gamma(f|P)$ that $\psi$ induces $\theta$ on $[f|P, g|P]$.
Since $P\in \cP$ was arbitrary, this proves the claim.

\medskip\noindent
For $(\psi,L)$ to have the property stated in the lemma it only remains to
check that $\st(L)=M$. This equality follows from Section~\ref{cc} in view of
how we ordered
$\text{Vert} (K)$ and  $\text{Vert} (\st K)$
and constructed $L$ and $M$.
\end{proof}

\medskip\noindent
{\bf Satisfying conditions $(\ast)$ and $(\ast \ast)$.} In the situation of the remark before the triangulation lemma, condition $(\ast)$ might fail for some $P\in \cP$.
We can then replace $K$ by its barycentric subdivision to satisfy $(\ast)$, as is done in \cite{book}, but a simplex
of this barycentric subdivision is not necessarily a $V$-simplex, so this device
fails to deal with $(\ast \ast)$. Fortunately, a slight generalization of
the barycentric subdivision solves this problem, as we describe below.

\medskip\noindent
Recall that the {\em barycenter\/} of an $m$-simplex $S=[a_0,\dots, a_m]$ in $R^n$ is the point $\frac{1}{m+1}(a_0+ \dots + a_m)$ in $S^0$.
Let $K$ be a complex in $R^n$. A {\em subdivision\/} of $K$ is a complex $K'$ in $R^n$ such that
$|K|=|K'|$ and each simplex of $K$ is a union of simplexes of $K'$; it follows easily
that then each set $S^{\o}$ with $S\in K$ is a union of sets $S'^{\o}$ with $S'\in K'$.
Define a {\em $K$-flag\/} to be a sequence $S_0, \dots, S_k$
in $K$ such that $S_i$ is a proper face of $S_{i+1}$ for all $i<k$. Given such a
$K$-flag and a point $b_i\in S_i^{\o}$ for $i=0,\dots,k$ we have a $k$-simplex
$[b_0,\dots, b_k]$. Assume now that to each $S\in K$ is assigned a point $b(S)\in S^{\o}$.
This yields a subdivision $b(K)$ of $K$ whose simplexes are the
$[b(S_0),\dots, b(S_k)]$ with $S_0,\dots, S_k$ a $K$-flag. (In Chapter 8 of \cite{book}
we took $b(S):= \text{barycenter of }S$, for each $S\in K$, and then $b(K)$ is the
barycentric subdivision of $K$.)

\medskip\noindent
The above paragraph uses only the semilinear structure of $R$, and so
goes through with $\bk$ instead of $R$. We now apply this to a $V$-complex
$K$ in $R^n$ as follows. We choose for each $S\in K$ a point $b(S)\in S^o$ such that
$$ \st\big( b(S)\big)\ =\ \text{barycenter of }\st(S).$$
We claim that then the subdivision $b(K)$ of $K$ has the following property:
$$b(K) \text{ is a $V$-complex, and }\st\big(b(K)\big)\ =\ \text{barycentric subdivision of }\st(K).$$
To see this, let $S_0,\dots, S_k$ be a $K$-flag and $T:=[b(S_0),\dots,b(S_k)]$. Then
$$ \st(T)=[\text{barycenter}\big(\st(S_0)\big),\dots, \text{barycenter}\big(\st(S_k)\big)]$$
is a simplex of the barycentric subdivision of $\st(K)$ (even if the sequence
$\st(S_0), \dots, \st(S_k)$ has repetitions), and each simplex of the barycentric subdivision of $\st(K)$ arises in this way
from a $K$-flag.

\begin{lemma}\label{star}
Assume $\bk_{\ind}$ is o-minimal. Let $K$ be a $V$-complex in $R^n$, $X:= |K|$, and $F$ a multifunction on
$(X,K)$ such that each $f\in F$ induces a function $f_{\st}\colon \st|K| \to \bk$ and for all $f,g\in F$ the set
$$\{y\in X:\; f_{\st}(y)=g_{\st}(y) \}$$ is a union of sets in $\st K$. Then there
is a subdivision $K'$ of $K$ such that $K'$ is a $V$-complex, and
$F$ as a multifunction on $(X,K')$ satisfies the following
conditions for all $P \in K'$ and all $Q \in \st( K')$:
\begin{itemize}
\item[$(\ast)$] if $f , g \in F|P$, $f \not= g$, then
$f(a)\ne g(a)$ for some vertex $a$ of $P$;
\item[$(\ast \ast)$] if $f , g \in F_{\st}|Q$, $f \not= g$,
then $f(a)\ne g(a)$ for some vertex $a$ of $Q$.
\end{itemize}
\end{lemma}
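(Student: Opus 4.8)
The plan is to let $K'$ be the ``generalized barycentric subdivision'' $b(K)$ of $K$ constructed in the paragraph just before the lemma: choose for each $S \in K$ a point $b(S) \in S^{\o}$ with $\st\big(b(S)\big)$ equal to the barycenter of $\st(S)$, and set $K' := b(K)$. By that paragraph, $K'$ is already a $V$-complex and $\st(K')$ is the ordinary barycentric subdivision of $\st(K)$, so $|K'| = |K| = X$ and $\st(K')$ is a complex; hence the only thing left to check is that, with this subdivision, the two conditions hold. First I would record the routine book-keeping that $F$ really is a multifunction on $(X, K')$: since $K'$ is a subdivision of $K$, each open simplex $P^{\o}$ with $P \in K'$ lies inside a single open simplex $S^{\o}$ with $S \in K$, and the trichotomy of $f,g \in F$ valid on $S^{\o}$ is inherited by $P^{\o}$. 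By hypothesis and the Remark preceding Lemma~\ref{trainglemma} (applied to the identity $V$-triangulation of $X = |K|$), $F_{\st}$ is a multifunction on $(\st X, \st K)$ with continuous members, and for the same reason it is a multifunction on $(\st X, \st(K'))$ too.

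The verification of $(\ast)$ and $(\ast\ast)$ is then one short observation used twice. Any $P \in b(K)$ has the form $[b(S_0),\dots,b(S_p)]$ for some $K$-flag $S_0 \subsetneq \dots \subsetneq S_p$; since each $b(S_j)$ lies in $S_j \subseteq S_p$ while $b(S_p) \in S_p^{\o}$, we get $P \subseteq S_p$ with $b(S_p)$ a vertex of $P$ lying in the relative interior $S_p^{\o}$. Now suppose $f, g \in F$ with $f|P \ne g|P$ but $f(a) = g(a)$ for every vertex $a$ of $P$; in particular $f\big(b(S_p)\big) = g\big(b(S_p)\big)$, so, $F$ being a multifunction on $(X,K)$ and $b(S_p) \in S_p^{\o}$, the trichotomy forces $f = g$ on $S_p^{\o}$, hence by continuity $f = g$ on $S_p = \closure{S_p^{\o}} \supseteq P$ --- contradicting $f|P \ne g|P$. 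This gives $(\ast)$. For $(\ast\ast)$ I would repeat the argument verbatim inside $\bk_{\ind}$: a simplex $Q \in \st(K')$ has the form $[\mathrm{barycenter}(T_0),\dots,\mathrm{barycenter}(T_q)]$ for a $\st K$-flag $T_0 \subsetneq \dots \subsetneq T_q$ (as $\st(K')$ is the barycentric subdivision of $\st K$), its top vertex $\mathrm{barycenter}(T_q)$ lies in $T_q^{\o}$ with $Q \subseteq T_q$, and since $F_{\st}$ is a multifunction on $(\st X, \st K)$ with continuous members, agreement of $f_{\st}, g_{\st}$ at every vertex of $Q$ forces $f_{\st} = g_{\st}$ on $T_q^{\o}$, hence on $T_q \supseteq Q$.

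Most of the real content has already been placed in the paragraph preceding the lemma, so I do not expect a serious obstacle; the point to be careful about is precisely the standard-part book-keeping that makes that paragraph work. One cannot simply take the usual barycentric subdivision of $K$: if $\st$ identifies two vertices of a $V$-simplex $S$, then the standard part of the barycenter of $S$ is a weighted point of $\st(S)$ that need not be its barycenter, so the induced subdivision of $\st(K)$ would not be the barycentric one and the clean flag description used in the proof of $(\ast\ast)$ would fail. The choice $\st\big(b(S)\big) = \mathrm{barycenter}\big(\st(S)\big)$ repairs exactly this, and once it is in place the nesting of open simplexes under subdivision, the identity $\closure{S^{\o}} = S$, and the inheritance of the trichotomy are all immediate.
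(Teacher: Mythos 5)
Your proposal is correct and matches the paper's approach exactly: the paper's proof is the one-line instruction to take $K' := b(K)$ as constructed in the preceding paragraph, and your verification of $(\ast)$ and $(\ast\ast)$ via the top-vertex-of-a-flag argument (run once for $K'$ in $R$, once for $\st(K')$ as the barycentric subdivision of $\st(K)$ in $\bk$) fills in precisely the details the paper leaves implicit, including the key observation about why the generalized barycentric subdivision is needed rather than the ordinary one.
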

\begin{proof} Just take as $K'$ a complex $b(K)$ as constructed in the
paragraph just before the statement of the lemma. Then $K'$ has the desired properties.
\end{proof}

\medskip\noindent
{\bf Small Paths.}
To apply the triangulation lemma in the next section we also
need to construct a multifunction. This will require the extension
lemma~\ref{ext2} as well as the lemma below about the ``small path'' property.
In the rest of this section $\bk_{\ind}$ is o-minimal, and we consider
a definable $V$-bounded set $X\subseteq R^n$. We say that $X$ has {\em small paths\/} if for all
$x,y\in X$ with $\st x = \st y$ there is $\epsilon\in \ma^{>0}$ and
a definable continuous path $\gamma \colon [0,\epsilon] \to X$ such
that $\gamma(0)=x,\ \gamma(\epsilon)=y$, and $\st(\gamma(t))=\st x$
for all $t\in [0,\epsilon]$; such $\gamma$ will be called a {\em small path}.
Note that if $X$ is convex, then
$X$ has small paths. It follows that if
there is a $V$-bounded simplex $S$ in $R^n$ and a
definable homeomorphism $X \to S$ inducing a
homeomorphism $\st X \to \st S$, then $X$ has small paths.

\begin{lemma}\label{con6} Assume $X$ has small paths, and let
$f\colon X \to R$ be definable, continuous, and $V$-bounded, such
that the upward unit vector $e_{n+1}\in \bk^{n+1}$ is a good
direction for $\st(\Gamma f)$. Then
$f$ induces a function $ \st X \to \bk$.
\end{lemma}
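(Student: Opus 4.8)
The plan is to argue by contradiction: suppose $f$ does not induce a function on $\st X$. By definition this means there are points $x, y \in X$ with $\st x = \st y$ but $\st f(x) \ne \st f(y)$, i.e.\ $f(x) - f(y) \notin \ma$. Using the hypothesis that $X$ has small paths, I would connect $x$ and $y$ by a small path $\gamma \colon [0,\epsilon] \to X$, with $\gamma(0) = x$, $\gamma(\epsilon) = y$, $\epsilon \in \ma^{>0}$, and $\st(\gamma(t)) = \st x$ for all $t$. Composing with $f$ gives a definable continuous function $h := f \circ \gamma \colon [0,\epsilon] \to R$, which is $V$-bounded since $f$ is. We have $h(0) = f(x)$ and $h(\epsilon) = f(y)$, and these differ by something not in $\ma$; in particular the image $h([0,\epsilon])$ is a closed interval $[c,d] \subseteq R$ (or a single point, but it cannot be a single point here) whose length $d - c$ is not infinitesimal, say $d - c > \delta$ for some $\delta \in R^{>\ma}$.

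The key point is then to extract from this a violation of the good-direction property of $e_{n+1}$ for $\st(\Gamma f)$. Consider the curve $t \mapsto (\gamma(t), f(\gamma(t))) = (\gamma(t), h(t))$ in $\Gamma(f) \subseteq R^{n+1}$; its image under $\st$ lies in $\st(\Gamma f) \subseteq \bk^{n+1}$. Since $\st(\gamma(t)) = \st x$ is constant, every point of this image has the form $(\st x, \st(h(t)))$, i.e.\ it lies on the vertical line $L := \st x + \bk e_{n+1}$ through $(\st x, 0)$. By the intermediate value theorem (o-minimality of $R$) applied to $h$, the image $h([0,\epsilon])$ is all of $[c,d]$, hence $\st(h([0,\epsilon]))$ contains the whole interval $[\st c, \st d] \subseteq \bk$, which is infinite since $\st d - \st c \ge \st\delta > 0$. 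Therefore $L \cap \st(\Gamma f)$ is infinite, contradicting that $e_{n+1}$ is a good direction for $\st(\Gamma f)$.

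The main obstacle to watch is making the last step rigorous: one needs that $\st(\Gamma f)$ actually contains $\{\st x\} \times [\st c, \st d]$ (or at least infinitely many points of $L$), not merely that the closure does. This follows because for each value $v \in [\st c, \st d]$ one can pick $v' \in [c,d]$ with $\st(v') = v$, then pick $t \in [0,\epsilon]$ with $h(t) = v'$ by the intermediate value theorem in $R$, and then $(\gamma(t), h(t)) \in \Gamma(f)$ with $\st(\gamma(t), h(t)) = (\st x, v) \in L \cap \st(\Gamma f)$. Since $[\st c, \st d]$ is infinite, this produces infinitely many points of $L \cap \st(\Gamma f)$, giving the required contradiction. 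A secondary routine check is that $h$ is indeed $V$-bounded and that its graph sits inside $\st(\Gamma f)$ correctly, but both are immediate from $V$-boundedness of $f$ and $\gamma$ together with $\st(\Gamma(f\circ\gamma)) \subseteq \st(\Gamma f)$.
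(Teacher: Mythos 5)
Your proof is correct and takes essentially the same approach as the paper: connect $x$ and $y$ by a small path, observe that the standard parts of the graph points along the path lie on a single vertical line, and invoke the good-direction hypothesis to force finiteness of the intersection with that line. You actually spell out the final step (via the intermediate value theorem, showing the standard parts fill an infinite interval of $\bk$ if $\st f(x) \ne \st f(y)$) more explicitly than the paper's terse formulation does.
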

\begin{proof} Let $x,y\in X$ be such that $\st x=\st y$; it is enough to show that
then $\st f(x)= \st f(y)$. Take a
small path $\gamma \colon [0,\epsilon ]\rightarrow X$ such that
$\gamma (0)=x $ and $\gamma (\epsilon )=y$. Then the standard parts of the points
$\big(\gamma(t), f(\gamma(t))\big)$ all lie on the same  vertical line in $\bk^{n+1}$, and since $e_{n+1}$ is a good direction for $\st(\Gamma f)$, this yields
$\st f(x)=\st f(y)$.
\end{proof}

\end{section}

\begin{section}{Proof of $V$-triangulation}

\noindent Recall the $V$-triangulation theorem stated on page
\pageref{triangthm}:

\begin{theorem}\label{tr} Suppose the boolean algebra $\Def^2(\bk_{\ind})$ is generated by
its subset $\{\st(X):\  X\in \Def^2(R)\}$. Then every $V$-bounded closed definable
$X\subseteq R^n$ with definable subsets
$X_1 , \dots , X_k$ has a $V$-triangulation
compatible with $X_1 , \dots, X_k$.
\end{theorem}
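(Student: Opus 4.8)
\emph{Proof proposal.}
The plan is to prove Theorem~\ref{tr} by induction on $n$, transporting the inductive triangulation procedure of Chapter~8 of \cite{book} across the standard part map with the tools assembled in the preceding sections. Note first that the hypothesis forces $\bk_{\ind}$ to be o-minimal (so all the lemmas phrased for o-minimal $\bk_{\ind}$ apply) and, by the Corollary at the end of Section~\ref{cnsn}, forces $(R,V)\models\Sigma$; it also makes Lemma~\ref{c2b} available. The cases $n\le 1$ are elementary: a closed $V$-bounded definable $X\subseteq R$ is a finite union of points and closed intervals with endpoints in $V$, and listing all these endpoints together with the breakpoints of the $X_i$ as vertices $t_0<\dots<t_p$ produces a $V$-complex $K$ with $|K|=X$; since $\st(t_0)\le\dots\le\st(t_p)$, the collapsed sequence is a complex in $\bk$, so $(\mathrm{id}_X,K)$ is a $V$-triangulation of $X$ compatible with $X_1,\dots,X_k$.

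For the step from $n$ to $n+1$, let $X\subseteq R^{n+1}$ be closed, $V$-bounded and definable with definable subsets $X_1,\dots,X_k$. Replacing $X$ by a $V$-bounded box $B=[-N,N]^{n+1}$ ($N\in V^{>0}$) that contains it and enlarging the distinguished family to include $X$ and the $X_i$, we reduce to $V$-triangulating $B$ compatibly with finitely many definable subsets. By Lemma~\ref{gooddir} choose a $V$-good direction $u$ for the union $Z$ of all cells of dimension $\le n$ in a cell decomposition of $R^{n+1}$ partitioning $B$ and these subsets; after a linear change of coordinates by a matrix in $GL_{n+1}(I)$ whose inverse also lies in $GL_{n+1}(I)$ (such a change preserves $V$-boundedness and commutes with $\st$, since its determinant has nonzero standard part) we may assume $e_{n+1}$ is a good direction for $Z$ and $\st(e_{n+1})$ is a good direction for $\st Z$. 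Recompute, for the projection $p=p_n^{n+1}$, a cell decomposition $\mathcal D$ adapted to $p$, compatible with $B$ and all the distinguished sets, and refined further so that each boundary function extends continuously to the closure of its cell; over each cell $C'$ of the induced decomposition $\mathcal D'$ of $R^n$ the cells of $\mathcal D$ lying in $B$ over $C'$ are the graphs of continuous definable $-N\le\zeta_{C',1}<\dots<\zeta_{C',s}\le N$ and the bands between consecutive ones. It now suffices to $V$-triangulate $B$ compatibly with $\mathcal D\cap B$.

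Apply the inductive hypothesis to $V$-triangulate the closed $V$-bounded base $[-N,N]^n$ compatibly with the cells of $\mathcal D'$, their closures, and certain closed definable $R$-subsets of $[-N,N]^n$ whose standard parts are the coincidence loci $\{y:f_{\st}(y)=g_{\st}(y)\}$ of the induced boundary functions; such $R$-sets exist because each such locus is closed (induced functions are continuous) and, being the standard part of $\{x:|f(x)-g(x)|\in\m\}$, equals the standard part of $\{x:|f(x)-g(x)|<\epsilon_0\}$ for some $\epsilon_0\in\m^{>0}$ by $\Sigma$ (by Lemma~\ref{c2b} when the locus is at most $1$-dimensional). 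Call the resulting $V$-triangulation $(\phi,K)$ and put $\cP:=\phi^{-1}(K)$. From the $\zeta_{C',i}$ build a multifunction $F$ on $([-N,N]^n,\cP)$ with $X^F=B$ and with pieces refining $\mathcal D\cap B$: each $\phi^{-1}(S^{\o})$ ($S\in K$) lies in a single cell $C'$, and since $\phi^{-1}(S)$ is definably homeomorphic, in an $\st$-respecting way, to the $V$-simplex $S$, it has small paths, so by Lemma~\ref{con6} (using the good direction) the functions $\zeta_{C',i}$ restrict over it to functions inducing functions on the standard part; extending these skeleton by skeleton over $|K|$ via the $V$-extension Lemma~\ref{ext2} — preserving $V$-boundedness, the inducing property, and, as in \cite{book}, the ordering — yields $F$, and by construction $F$ and $\cP$ satisfy the standing hypotheses of the Remark preceding Lemma~\ref{trainglemma}. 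Subdividing once more by Lemma~\ref{star} to secure conditions $(\ast)$ and $(\ast\ast)$, we apply the Triangulation Lemma~\ref{trainglemma}: it delivers a $V$-triangulation $(\psi,L)$ of $X^F=B$ that lifts $(\phi,K)$ and is compatible with the pieces of $\cP^F$, hence with $\mathcal D\cap B$, hence with $X,X_1,\dots,X_k$. Restricting $\psi$ to the subcomplex of $L$ with support $\psi(X)$ gives the desired $V$-triangulation of $X$ compatible with the $X_i$.

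The step I expect to be the real obstacle is the one just sketched: building the base triangulation and the multifunction \emph{together} so that every function is $V$-bounded with a well-defined induced function on the standard part — false for arbitrary definable functions, and rescued only by the good-direction/small-path mechanism of Lemma~\ref{con6} combined with the $V$-extension lemmas — while \emph{simultaneously} arranging that every coincidence locus of induced functions is a union of cells of $\st\cP$. The latter demands that those loci be realized as standard parts of $R$-definable sets and then fed into the base triangulation, which is precisely where the full strength of the hypothesis enters through $\Sigma$ (and Lemma~\ref{c2b} in low dimension), and it forces a bootstrapping between refining the base triangulation and extending the boundary functions. Underlying all of this is the mismatch, for a $V$-simplex $S$ with $\dim\st S<\dim S$, between $\st(S^{\o})$, $(\st S)^{\o}$, and between $\st(S\cap S')$ and $\st S\cap\st S'$ for distinct simplices; controlling it is exactly why the whole development is carried out with $V$-complexes, simplicial orderings of vertices, and the modified extension and subdivision lemmas of Sections~\ref{cc}--\ref{cnsn} rather than their classical analogues.
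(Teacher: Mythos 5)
Your overall plan is the paper's plan: induct on $n$, use a $V$-good direction to project, $V$-triangulate the base by induction, build a multifunction of boundary functions (extended by Lemma~\ref{ext2} and controlled on standard parts by the small-path Lemma~\ref{con6}), refine the base triangulation to account for coincidence loci of the induced functions, fix up $(\ast)$, $(\ast\ast)$ with Lemma~\ref{star}, and finish with Lemma~\ref{trainglemma}. But there is a genuine circularity in your opening move. You assert that the hypothesis ``forces $(R,V)\models\Sigma$ by the Corollary at the end of Section~\ref{cnsn}'' and that it ``makes Lemma~\ref{c2b} available.'' That corollary requires $\C'(2)$ (and Lemma~\ref{c2b} is stated under the hypothesis $\C'(2)$), whereas the hypothesis of the theorem is condition $(6)$ of Theorem~\ref{equi}. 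The implication $(6)\Rightarrow(1)$ (and hence $(6)\Rightarrow\C'(2)$) is exactly the implication the paper says ``requires a new tool: $V$-triangulation'' --- it is part of what Theorem~\ref{tr} is being proved to obtain. So you cannot use $\Sigma$ or Lemma~\ref{c2b} as inputs; they are (downstream) outputs.

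The paper repairs this by strengthening the induction. Alongside the triangulation statement (your (ii)) it carries the statement (i): $\Def^n(\bk_{\ind})$ is generated by $\{\st(X):X\in\Def^n(R)\}$. At each stage, (i) together with (ii) is used to prove $\C(n)$ (any closed bounded set definable in $\bk_{\ind}$ is a finite union of $\st(P)$ for $P$ in the triangulation, hence of the form $\st(Q)$), and then Lemma~\ref{closed then sigma} gives $\Sigma(n)$ --- which is all that is needed to produce $\epsilon_0$ so that $\{x:|f(x)-g(x)|\le\epsilon_0\}$ has the right standard part and can be fed into the base triangulation. Finally, (i) for $n+1$ is obtained for $n=1$ from the theorem's hypothesis and for $n\ge 2$ from $\Sigma$ via the machinery of \cite{st1}. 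Your remaining steps (box reduction, $GL_{n+1}(I)$ change of coordinates to make $e_{n+1}$ a $V$-good direction, building $F$, applying Lemmas~\ref{con6}, \ref{ext2}, \ref{star}, \ref{trainglemma}) are sound and match the paper; the fix is to replace the upfront appeal to $\Sigma$ with this bootstrapping of $\C(n)$ and $\Sigma(n)$ inside the induction.
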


\noindent
Before we start the proof, first note that the hypothesis of the theorem
implies that $\Def^1(\bk_{\ind})$ is generated by
$\{\st(X):\  X\in \Def^1(R)\}$, which in turn is equivalent to
$\bk_{\ind}$ being o-minimal. If $\bk_{\ind}$ is o-minimal,
the conclusion of the theorem clearly holds for $n=1$. The proof will
show that the conclusion of the theorem for $n=2$ also follows just from
assuming that $\bk_{\ind}$ is o-minimal. The stronger hypothesis about
$\Def^2(\bk_{\ind})$ will only be used to obtain the conclusion of the theorem
for $n>2$.

\begin{proof} As already noted, $\bk_{\ind}$ is o-minimal, and
the theorem holds for $n=1$. We proceed by induction on $n$, so assume
inductively that for a certain $n\ge 1$: \begin{enumerate}
\item[(i)] $\Def^n(\bk_{\ind})$ is generated by $\{\st(X):\  X\in \Def^n(R)\}$;
\item[(ii)] every $V$-bounded closed definable $X\subseteq R^n$ with definable
subsets $X_1 , \dots , X_k$ has a $V$-triangulation
compatible with $X_1 , \dots, X_k$.
\end{enumerate}
{\em Claim.} $\C(n)$ holds. To prove this claim, let $Z\in \Def^n(\bk_{\ind})$
be closed and bounded in $\bk^n$; we have to show that $Z=\st(Q)$ for some
$Q\in \Def^n(R)$. Now by part (i) of the inductive assumption,
$Z$ is a boolean combination of
sets $\st(X_1),\dots, \st(X_k)$ with $X_1,\dots, X_k\in \Def^n(R)$, and we
can assume that $X_1,\dots, X_k$ are $V$-bounded.
Take a $V$-bounded closed $X\in \Def^n(R)$ containing all $X_i$ as subsets
and such that $Z\subseteq \st(X)$. Then by part (ii) of the inductive assumption
we have triangulated sets $(X, \cP)$ and $(\st X, \st \cP)$
such that each $X_i$ is a union of sets $P^{\o}$ with $P\in \cP$. Then
each $\st(X_i)$ is a union of sets $\st(P^{\o})=\st(P)\in \st(\cP)$. Each
$\st(P)$ with $P\in \cP$ is a union of sets from the partition
$\st(\cP)^{\o}=\{\st(P)^{\o}:\ P\in \cP\}$ of $\st(X)$, so each
$\st(X_i)$ is such a union as well, and so is their boolean combination $Z$.
But $Z$ is closed, so $Z$ is then a union of closures $\st(P)$ of sets
$\st(P)^{\o}$ with $P\in \cP$, and so $Z=\st(Q)$ where $Q$ is a union of
sets $P\in \cP$. This proves the claim.

Then by Lemma~\ref{closed then sigma} we have
$(R,V)\models \Sigma (n)$. Also (i) holds
with $n+1$ instead of $n$: for $n=1$ this is
just the hypothesis of the theorem, and if $n\ge 2$, then $\Sigma$ holds
by the claim and Section 2, and so we can use \cite{st1}.

In order to prove that (ii) holds with $n+1$ instead of $n$, let $Y\subseteq R^{n+1}$ be $V$-bounded, closed, and definable,
with definable subsets $Y_1,\dots, Y_k$; our aim is then to construct a
$V$-triangulation of $Y$ compatible with $Y_1,\dots, Y_k$. Put
$$T:= \text{bd}(Y_0) \cup \text{bd}(Y_1 )\cup \dots \cup \text{bd}(Y_k
), \qquad Y_0:= Y.$$ Then $\dim T \le n$, so by Lemma \ref{gooddir} and an argument
as in the proof of 2.9, p.130 in \cite{book}, we can assume
that $e_{n+1}$ is a $V$-good direction for $T$.

We are going to construct a $V$-triangulation of $X:= p^{n+1}_{n}T =
p^{n+1}_{n}Y\subseteq R^n$ so that we can use the triangulation lemma~\ref{trainglemma}.

Cell decomposition gives a finite
partition $\mathcal{C}$ of $X$ into cells $C$ such that $T \cap (C \times R)$
is the union of the graphs of definable continuous functions
$$f_{C,1}< \dots < f_{C,l(C)} : C \to R, \quad l(C)\ge 1,$$
such that for $i=0,\dots,k$ and $j=1,\dots,l(C)$,
\begin{align*} \text{either }&\Gamma(f_{C,j})\subseteq Y_i\ \text{ or }\ \Gamma(f_{C,j})\cap Y_i= \emptyset, \text{ and for }1\le j<l(C):\\
\text{either }&(f_{C,j}, f_{C,j+1})\subseteq Y_i\ \text{ or }\  (f_{C,j}, f_{C,j+1})\cap Y_i= \emptyset.
\end{align*}
Since $e_{n+1}$ is a good direction for $T$ and $T\supseteq
\cl(\Gamma f)$ for each $f=f_{C,j}$, each $f_{C,j}$ extends
continuously to a definable function $\cl(C)\to R$, and we denote
this extension also by $f_{C,j}$. We need to extend these functions
$f_{C,j}$ to all of $X$ in a nice way, and towards this goal we note
that the inductive assumption (ii) gives a $V$-triangulation $(\phi
, K)$ of $X$ compatible with all $C\in \mathcal{C}$. This gives a
triangulated set $(X, \cP)$ with $\cP:= \phi^{-1}(K)$. Let $C\in \C$
be given. Then the set $\cl(C)$ is a finite union of sets $P\in
\cP$. The sets $P\in \cP$ have small paths, and so by
Lemma~\ref{con6} each function $f_{C,j} \colon \cl(C) \to R$ induces
a function on $\st( \cl(C))$, and thus, by Lemma~\ref{ext2}, extends
to a definable continuous $V$-bounded function $f\colon X \to R$
such that $f$ induces a function $\st(X) \to \bk$. In this way we
obtain a finite set $F$ of definable continuous $V$-bounded
functions $f\colon X \to R$ such that each $f\in F$ induces a
function $f_{\st} \colon \st X \to \bk$, each $f\in F$ extends some
$f_{C,j}$, and each $f_{C,j}$ has an extension in $F$. To make $F$
into a multifunction on $(X, \cP)$ that induces a multifunction on
$(\st X, \st \cP)$ we may have to refine $\cP$, and this is done as
follows. Since $\Sigma(n)$ holds in $(R,V)$, we have $\epsilon_0 \in
\ma^{>0}$ such that for all $f,g\in F$ and $\epsilon\in
\ma^{>\epsilon_0}$,
$$
\st \{x\in X :\ |f(x)-g(x)|\leq \epsilon_0 \}\  =\
\st \{x\in X:\  |f(x)-g(x)|\leq \epsilon\},  $$
and thus for all $f,g\in F$,
$$ \st \{x\in X :\ |f(x)-g(x)|\leq \epsilon_0 \}\  =\
\st \{x\in X:\  f(x)-g(x)\in \ma\}. $$
Using again the inductive assumption (ii) we arrange that our
$V$-triangulation $(\phi , K)$ above is also
compatible with all sets $$\{x\in X:\ f(x)=g(x)\}\ \text{ and }\ \{x\in X:\  |f(x)-g(x)|\leq \epsilon_0\}, \qquad (f,g\in F).$$
Note that then $F$ is a multifunction on $(X, \cP)$, and that for all $Y_i$
and $f\in F$ and $P\in \cP$, either $\Gamma(f|P^o)\subseteq Y_i$ or $\Gamma(f|P^o)\cap Y_i= \emptyset$, and if also $g\in F$ is the successor of $f$ on $P$,
then either $(f|P^o, g|P^o)\subseteq Y_i$ or
$(f|P^o, g|P^o)\cap Y_i= \emptyset$. Note that for all $f,g\in F$,
$$\st \{x\in X:\  |f(x)-g(x)|\leq \epsilon_0\}\ =\ \{y\in \st X:\ f_{\st}(y)=g_{\st}(y)\},$$
and the set on the left is a union of sets in $\st \cP$. Hence
we are in the situation of the remark preceding
Lemma~\ref{trainglemma}, so
$F_{\st}:= \{f_{\st}:\ f\in F\}$ is a multifunction on $(\st X, \st \cP)$.
By Lemma \ref{star} we can replace $K$ by a subdivision and $\cP$ accordingly to arrange also that for all $P\in \cP$ and $Q\in \st(\cP)$ conditions $(\ast)$ and $(\ast \ast)$ of Lemma
\ref{trainglemma} are satisfied. This triangulation lemma then yields a $V$-triangulation $(\psi, L)$ of $X^F$ that lifts $(\phi,K)$ and is compatible with the sets in $\cP^F$, and such that $(\psi_{\st}, \st(L))$ is a lifting of
$(\phi_{\st}, \st(K))$ compatible with the sets in $\st(\cP)^{F_{\st}}$.
Let $L'$ be the subcomplex of $L$ for which $|L'|=\psi(Y)$, and put $\psi':= \psi|Y$.
Then $(\psi', L')$ is a $V$-triangulation of $Y$ compatible with $Y_1,\dots, Y_k$, as promised.
\end{proof}

\noindent
In the course of the proof just given we have also established the
implication $(6) \Rightarrow (1)$ of Theorem~\ref{equi}, and this concludes
the proof of that theorem, by remarks following its statement.

\end{section}

\begin{section}{Two applications of $V$-triangulation}

\noindent
In this section we assume that $(R,V)$ satisfies the (equivalent) conditions of
Theorem~\ref{equi}.
Here is an easy consequence of $V$-triangulation and Lemma~\ref{ext2}:

\begin{corollary} Let $X,Y\subseteq R^n$ be closed and $V$-bounded definable sets with $X\subseteq Y$,
and let $f: X \to R$ be a continuous $V$-bounded definable function inducing a function $\st X \to \bk$. Then $f$ extends to a continuous $V$-bounded definable function inducing a function
$\st Y \to \bk$.
\end{corollary}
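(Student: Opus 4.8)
The natural strategy is to reduce the extension problem on $Y$ to an extension problem on a complex, solve it there with Lemma~\ref{ext2}, and transport back. First I would apply the $V$-triangulation theorem (Theorem~\ref{tr}) to $Y$, using $X$ as a distinguished subset: this yields a $V$-triangulation $(\psi, L)$ of $Y$ compatible with $X$, so that $X$ is carried by $\psi$ to a union of sets $S^{\o}$ with $S\in L$. Let $M$ be the subcomplex of $L$ with $|M| = \psi(X)$; then $\psi|X \colon X \to |M|$ is a homeomorphism inducing the homeomorphism $\psi_{\st}|_{\st X} \colon \st X \to \st(|M|)$, and $M$ is a subcomplex of the $V$-bounded complex $L$.

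**Transporting $f$ to the complex.** Next I would push $f$ forward along $\psi$: set $h := f \circ (\psi|X)^{-1} \colon |M| \to R$. This is definable, continuous, and $V$-bounded (its graph is the image of $\Gamma(f)$ under the $V$-bounded homeomorphism $(\mathrm{id}, \psi) $ restricted appropriately, hence $V$-bounded). Moreover $h$ induces a function $\st|M| \to \bk$: indeed $\psi|X$ induces $\psi_{\st}|_{\st X}$, which is a homeomorphism, so $\st(\Gamma(h)) = \st(\Gamma((\psi|X)^{-1})) \circ \st(\Gamma(f))$ composes to a graph, namely $\Gamma(f_{\st} \circ (\psi_{\st}|_{\st X})^{-1})$. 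Now Lemma~\ref{ext2}, applied to the subcomplex $M$ of the $V$-bounded complex $L$ and the function $h$, provides a $V$-bounded continuous definable extension $\widetilde h \colon |L| \to R$ inducing a function $\st|L| \to \bk$.

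**Transporting back.** Finally, set $g := \widetilde h \circ \psi \colon Y \to R$. Since $\psi \colon Y \to |L|$ is a definable homeomorphism inducing $\psi_{\st} \colon \st Y \to \st|L|$, and $\widetilde h$ induces a function $\st|L| \to \bk$, the composite $g$ is definable, continuous, $V$-bounded, and induces a function $\st Y \to \bk$ (the induced function being $(\widetilde h)_{\st} \circ \psi_{\st}$). For $x \in X$ we have $\psi(x) \in |M|$ and $\widetilde h(\psi(x)) = h(\psi(x)) = f((\psi|X)^{-1}(\psi(x))) = f(x)$, so $g$ extends $f$. This completes the argument.

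**Expected main obstacle.** None of the steps is deep, but the one requiring genuine care is the bookkeeping that each constructed map is $V$-bounded and, more delicately, that composing a map that \emph{induces} a function with another such map again \emph{induces} the composite — i.e.\ that $\st$ commutes with the relevant compositions of graphs. This is exactly the kind of "extra care" the introduction warns about; the key point is that a definable homeomorphism inducing a homeomorphism behaves well under inversion and composition, which should be recorded (it is implicitly used already in Section~\ref{cc} and the Triangulation Lemma). Once that is in hand, the corollary is a straightforward "conjugate the problem onto a complex and invoke Lemma~\ref{ext2}" argument.
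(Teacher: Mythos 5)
Your proof is correct and follows exactly the strategy the paper indicates when it calls the corollary ``an easy consequence of $V$-triangulation and Lemma~\ref{ext2}'': apply Theorem~\ref{tr} to $Y$ with $X$ as a distinguished subset, extract the subcomplex $M$ with $|M|=\psi(X)$, transport $f$ to $|M|$, extend via Lemma~\ref{ext2}, and pull back along $\psi$. The bookkeeping that inducing maps compose and invert as expected is exactly the care needed, and you have handled it correctly.
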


\noindent
Does this go through if the assumption that $X$ and $Y$ are closed is replaced by the weaker one that $X$ is closed in $Y$? That would give a $V$-version of the o-minimal Tietze extension result (3.10) of Chapter 8 in \cite{book}.

\medskip\noindent
{\bf A Finiteness Result.} Let $X\subseteq
R^m$ and $Y\subseteq R^n$ be $V$-bounded and definable. Then a $V$-homeomorphism
$f \colon X \to Y$ is by definition a definable homeomorphism $X \to
Y$ that induces a homeomorphism $f_{\st}\colon \st X \to \st Y$.

\medskip\noindent
For a $V$-bounded definable $X\subseteq R^{m+n}$
the sets $X(a)\subseteq R^n$ with $a\in R^m$ fall into only finitely many  $V$-homeomorphism types. Towards proving this (in a stronger form), consider
triples $(N, \C, E)$ such that $N\in \mathbb{N}$, $\C$ is a collection of nonempty subsets of $\{1,\dots,N\}$ with $\{i\}\in \C$ for $i=1,\dots,N$ and
$I\in \C$ whenever $I$ is a nonempty subset of some $J\in \C$, and $E$ is an equivalence relation on $\{1,\dots,N\}$. Note that for any given $N\in \mathbb{N}$
there are only finitely many such triples $(N, \C, E)$, so in total there are
only countably many such triples.

Let $(N, \C,E)$ be a triple as above. We say that a $V$-complex $K$ in $R^n$
is of type $(N, \C, E)$ if there is a bijection $i: \text{Vert}(K) \to \{1,\dots,N\}$
such that $\C$ is the collection of sets
$\{i(a):\ a \text{ is a vertex of }S\}$ with $S\in K$,
and for all $a,b\in \text{Vert}(K)$, $i(a)E\ i(b)\ \Leftrightarrow\  \st(a)=\st(b)$.
Suppose the $V$-complexes $K$ in $R^n$  and $K'$ in $R^{n'}$ are both of type
$(N, \C, E)$, witnessed by the bijections $i: \text{Vert}{K} \to \{1,\dots, N\}$ and
$j: \text{Vert}(K') \to \{1,\dots,N\}$. We claim that then $|K|$ and $|K'|$ are
$V$-homeomorphic. To see this, note that the map
$v:=j^{-1}\circ i: \text{Vert}(K) \to \text{Vert}(K')$ is a bijection such that
\begin{enumerate}
\item[(i)] for all $a_0,\dots, a_k\in \text{Vert}(K)$, $a_0,\dots, a_k$ are the
vertices of a simplex in $K$ iff
$va_0,\dots, va_k$ are the vertices of a simplex in $K'$;
\item[(ii)] for all $a,b\in \text{Vert}(K)$, $\st(a)=\st(b)$ iff
$\st(va)=\st(vb)$.
\end{enumerate}

\noindent By (i) we can extend $v$ uniquely to a homeomorphism $\phi
\colon |K| \to |K'|$ that is affine on each simplex of $K$. Using Lemma~\ref{vg} and the assumption that $K$ and $K'$ are $V$-complexes
it then follows from (ii) that $\phi$ is a $V$-homeomorphism.

For the proof below it is convenient to fix a sequence of
$V$-complexes $K_1, K_2, K_3,\dots$ in $R^n$ such that every $V$-complex $K$ in $R^n$
is of the same type $(N, \C, E)$ as some complex in this sequence.

\begin{corollary} Let $Z\subseteq R^m$ be definable, and let
$X\subseteq Z\times R^n\subseteq R^{m+n}$ be definable such that each section $X(a)$
with $a\in Z$ is $V$-bounded. Then there is a partition of $Z$ into subsets $Z_1,\dots, Z_k$, definable in $(R,V)$,
such that if $a,b\in Z$ are in the same $Z_i$, then $X(a)$ and $X(b)$ are $V$-homeomorphic.
\end{corollary}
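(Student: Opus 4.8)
The plan is to reduce the statement to the finiteness of $V$-homeomorphism types recorded by the sequence $K_1, K_2, \dots$ of $V$-complexes, using $V$-triangulation applied uniformly in the parameter $a\in Z$. First I would reduce to the case $Z\subseteq I^m$ and $X\subseteq Z\times I^n$ by a definable coordinatewise rescaling that is a $V$-homeomorphism on each fiber; this lets me assume all fibers $X(a)$ are $V$-bounded by a fixed element of $V$. Next, replacing $X$ by its closure in $Z\times I^n$ and intersecting with the relevant boundary sets if needed, I would like each fiber $X(a)$ to be closed and $V$-bounded, so that Theorem~\ref{tr} (equivalently Theorem~\ref{triangthm}) applies to it. The point is to run the construction in the proof of Theorem~\ref{tr} \emph{with parameters}: the cell decomposition, definable choice, good-direction arguments, extension lemmas, and the subdivision step all go through definably in $a$, so one obtains — after partitioning $Z$ into finitely many definable pieces — that on each piece there is a single combinatorial recipe producing, for each $a$, a $V$-triangulation $(\phi_a, K_a)$ of $X(a)$.

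The key steps, in order, are as follows. (1) Reduce to closed $V$-bounded fibers inside $I^n$ as above. (2) Apply the relative (parametrized) form of the $V$-triangulation theorem: by definable compactness/finiteness arguments in the o-minimal field $R$, partition $Z$ definably into $Z_1,\dots, Z_k$ so that over each $Z_i$ the triangulating complex $K_a$ of $X(a)$ varies "uniformly" — concretely, the number $N$ of vertices is constant, the vertex coordinates depend definably on $a$, and the combinatorial data (which tuples of vertices span a simplex) is constant. Here one uses that there are only finitely many possible combinatorial patterns for a bounded number of vertices, so a further definable refinement of $Z$ makes this pattern constant on each piece. (3) On each piece, the extra datum one must control is the \emph{standard-part collapse} among the vertices: for $a, b$ in the same piece one needs that $\st(v)=\st(w)$ for vertices $v,w$ of $K_a$ iff the corresponding relation holds for $K_b$. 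By $\Sigma(n{+}1)$ (which holds since $(R,V)$ satisfies the conditions of Theorem~\ref{equi}) the relation "the $i$-th and $j$-th vertex of $K_a$ have equal standard part", i.e. "their coordinatewise difference lies in $\ma$", is — after one more definable partition of $Z$ — constant on each piece; this pins down the equivalence relation $E$. (4) Thus on each final piece $Z_i$ every $K_a$ is a $V$-complex of one fixed type $(N,\C,E)$, hence of the same type as some $K_j$ in the fixed sequence, and therefore $|K_a|$ is $V$-homeomorphic to $|K_j|$ for all $a\in Z_i$; composing with $\phi_a^{-1}$ and $\phi_b$ gives a $V$-homeomorphism $X(a)\to X(b)$ for $a,b\in Z_i$.

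The main obstacle is step (2)–(3): making the $V$-triangulation from the proof of Theorem~\ref{tr} genuinely \emph{uniform in the parameter}. The proof of Theorem~\ref{tr} is an induction on $n$ interleaving cell decomposition, choices of good directions, the extension lemmas, a use of $\Sigma(n)$ to find a single $\epsilon_0$, and a barycentric-type subdivision; each of these is definable, but one must check that carrying a parameter $a$ along only costs finitely many definable subdivisions of $Z$ — e.g. the good direction $e_{n+1}$ can be fixed after an initial definable change of coordinates depending on $a$, the constant $\epsilon_0$ from $\Sigma(n)$ can be chosen uniformly by applying $\Sigma$ to the family indexed by $a$, and the finitely many combinatorial outcomes of cell decomposition and subdivision partition $Z$ definably. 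Once uniformity is in hand, the reduction to the finite list $K_1,K_2,\dots$ via the type $(N,\C,E)$ is exactly the $V$-homeomorphism argument already given in the text preceding the corollary, and there is nothing further to do.
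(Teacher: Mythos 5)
Your plan — run the construction of Theorem~\ref{tr} \emph{with a parameter} $a\in Z$, partition $Z$ definably until the combinatorial data $(N,\mathcal{C},E)$ is constant on each piece, and then invoke the finitely many types of $V$-complexes — is a genuinely different route from the paper's, but it stops precisely where the real work begins. The paper avoids the uniformity problem entirely by a compactness argument: one may assume $(R,V)$ is $\kappa$-saturated for $\kappa>|L|$; Theorem~\ref{tr} applied pointwise gives, for each $a\in Z$, an $L$-formula $\phi(u,x,y)$ (with $u$ an infinite tuple of parameter variables, only finitely many occurring) and a parameter $b$ such that $\phi(b,x,y)$ defines a $V$-triangulation of $X(a)$ onto some $|K_j|$ in the fixed list; by saturation finitely many formulas $\phi_1,\dots,\phi_l$ suffice to cover all of $Z$; choosing $M$ to bound the $u$-variables used and unwinding the defining conditions (which involve $\st$ and are hence $(R,V)$-definable in $a$) gives the partition. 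This is short and sidesteps any need to make the inductive construction of Theorem~\ref{tr} uniform.

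In your version, steps (2)--(3) are asserted rather than proved, and they are exactly where the difficulty sits. Three concrete gaps. First, the $\epsilon_0$ supplied by $\Sigma(n)$ in the multifunction step must be taken \emph{independently of $a$} on each piece; but applying $\Sigma(m+n)$ to the family with $a$ absorbed into the ambient variables gives only $\st\{(a,x):|f_a(x)-g_a(x)|\le\epsilon_0\}=\st\{(a,x):|f_a(x)-g_a(x)|\le\epsilon\}$, whose fiber over a point $\bar a\in\bk^m$ is a union over \emph{all} $a'$ with $\st a'=\bar a$ — not the fiberwise equality $\st\{x:|f_a(x)-g_a(x)|\le\epsilon_0\}=\st\{x:|f_a(x)-g_a(x)|\le\epsilon\}$ that the construction actually needs. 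Second, Lemmas~\ref{ext1} and~\ref{ext2} make choices (of $u$, of $b$ with $d(a,b)\in\ma$, etc.) that have to be made coherently in $a$; it is not clear these choices can be made $(R,V)$-definably in $a$ without exactly the kind of compactness argument the paper uses. Third, the $V$-good direction depends on $a$, and although a change of coordinates definable in $a$ reduces to $e_{n+1}$, one still has to argue that the induced normalization interacts correctly with $\st$ uniformly on each piece. Finally, your invocation of $\Sigma(n+1)$ to control the standard-part collapse among vertices is misplaced: once the vertices depend definably on $a$, the relation ``vertices $i$ and $j$ of $K_a$ have equal standard part'' is already an $(R,V)$-definable condition on $a$, and partitioning $Z$ by the finitely many truth-patterns of these $\binom{N}{2}$ conditions needs no appeal to $\Sigma$ at all. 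In short, the intended approach could perhaps be made to work, but as written it does not establish the uniformity it claims, whereas the paper's saturation argument makes that issue evaporate.
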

\begin{proof} We shall establish this in the stronger form that
there are $M\in \N$ and definable sets
$\Phi_1,\dots, \Phi_l\subseteq R^M\times R^{2n}$ such that
for each $a\in Z$ there is $j\in \{1,\dots,l\}$ and $b\in R^M$ for which $\Phi_j(b)\subseteq R^{2n}$ is the graph of a
map $\phi: \cl(X(a)) \to |K_j|$ that makes $(\phi,K_j)$ a $V$-triangulation of
$\cl(X(a))$ compatible with $X(a)$. For simplicity, assume that
$X(a)$ is closed for all $a\in Z$; the general case is very similar.
To prove the stronger statement we can assume that $(R,V)$ is $\kappa$-saturated
with uncountable $\kappa > |L|$ where $L$ is the language of $\text{Th}(R)$.
Consider $L$-formulas $\phi(u,x,y)$ where $u=(u_1,u_2,\dots)$ is an infinite
sequence of variables and $x=(x_1,\dots, x_n)$, $y=(y_1,\dots, y_n)$. (Of course,
in each such $\phi(u,x,y)$ only finitely many $u_i$ occur.) By $V$-triangulation and saturation there are such formulas $\phi_1(u,x,y), \dots, \phi_l(u,x,y)$ such that
for each $a\in Z$ there is $j\in \{1,\dots,l\}$ and $b\in R^{\N}$ for which
$\phi(b,x,y)$ defines the graph of a map $\phi: X(a) \to |K_j|$ that makes
$(\phi, K_j)$ is a $V$-triangulation of $X(a)$. Now take $M\in \N$ such that
no variable $u_i$ with $i>M$ occurs in any of the $\phi_j$. With this $M$ the
claim at the beginning of the proof is established.
\end{proof}

\end{section}

\end{document}